\newcommand{\NN}{{\mathbb N}}
\newcommand{\ZZ}{\mathbb Z}
\def\C{{\mathcal C}}
\def\diam{{\rm diam}}
\def\Aut{{\rm Aut}}
\def\Conj{{\rm Conj}}
\def\Stab{{\rm Stab}}
\numberwithin{equation}{section}
\newtheorem{theo}{Theorem}[section]
\newtheorem{prop}[theo]{Proposition}
\newtheorem{example}[theo]{Example}
\newtheorem{coro}[theo]{Corollary}
\newtheorem{lemma}[theo]{Lemma}
\newtheorem{defi}[theo]{Definition}
\newtheorem{remark}[theo]{Remark}
\definecolor{RED}{rgb}{1,0,0}\definecolor{BLUE}{rgb}{0,0,1} 
\title{Group actions with almost normal stabilizers}
\author[]{Mar\'{\i}a Isabel Cortez}
\address{Mar\'{\i}a Isabel Cortez, Facultad de Matem\'aticas, Pontificia Universidad Cat\'olica de Chile. Edificio Rolando Chuaqui, Campus San Joaqu\'{\i}n. Avda. Vicuña Mackenna 4860, Macul, Chile}
\email{maria.cortez@uc.cl}
\author[]{Maik Gr\"oger}
\address{Maik Gr\"oger, Faculty of Mathematics and Computer Science, Jagiellonian University, ul. Prof. S. Łojasiewicza 6, 30-348 Kraków, Poland}
\email{maik.groeger@im.uj.edu.pl}
\author[]{Olga Lukina}
\address{Olga Lukina, Mathematical Institute, Leiden University, P.O. Box 9512,
2300 RA Leiden,
The Netherlands}
\email{o.lukina@math.leidenuniv.nl }
\date{April 1, 2025. Revision April 22, 2026.}
\thanks{MSC2020 Classification: Primary 37A15, 22F05, 22F10. Secondary 37A05, 37B02.}
\thanks{Keywords: actions of countable groups, essential holonomy, essentially free actions, locally quasi-analytic actions, finite-to-one extensions of dynamical systems}
\begin{document}

\begin{abstract}
    In this paper, we consider minimal group actions of countable groups on compact Hausdorff spaces by homeomorphisms. We show that the existence of a point with finite stabilizer imposes strong restrictions on the dynamics: a residual set of points then has stabilizers conjugate to the same almost normal subgroup of the acting group. Under certain conditions on the acting group such an action also has no essential holonomy.
    If the acting group is residually finite, we show that every group action with finite almost normal stabilizers with no essential holonomy arises as an almost finite-to-one factor of an essentially free action.
\end{abstract}

\maketitle

\section{Introduction}

In this note, we study topological and measure-preserving group actions for which most  stabilizers are isomorphic to the same almost normal subgroup of the acting group. Such a property of having `few distinct stabilizers' can be considered as the first generalization of the concept of a topologically free action, as we explain below. We are interested in topological and ergodic properties of such actions. 

By a \emph{topological dynamical system}, or just a \emph{dynamical system}, or a \emph{(group) action}, we mean a pair $(X,G)$, where $G$ is a countable discrete finitely generated group acting on a compact Hausdorff space $X$ by homeomorphisms.  That is, for every $g \in G$ there is a homeomorphism $\phi_g: X \to X$ such that for any $g_1,g_2 \in G$ and any $x \in X$ we have $\phi_{g_1g_2}(x) = \phi_{g_1}\circ \phi_{g_2}(x)$. Throughout the paper, we use the shortcut notation $gx: = \phi_g(x)$ for the action. Similarly, for a set $A \subset X$ we write $gA:= \{gx : x \in A\}$, for any $g \in G$. The {\it orbit } of $x\in X$ is the set  $O(x)=\{gx: g\in G\}$.  A set $A\subseteq X$ is {\it invariant under the action of} $G$ if $gA=A$, for every $g\in G$. The dynamical system $(X,G)$ is {\it minimal} if $X$ is the only closed non-empty invariant subset of $X$, which is equivalent to saying that $O(x)$ is dense in $X$, for every $x\in X$. An {\it invariant measure} of $(X,G)$ is a Borel probability measure $\mu$ such that $\mu(A)=\mu(gA)$, for every Borel set $A\subseteq X$, and every $g\in G$. The invariant measure $\mu$ is {\it ergodic} if $\mu(A)\in \{0,1\}$, for every invariant Borel set $A\subseteq X$. We denote a dynamical system with an invariant measure $\mu$ by $(X,G,\mu)$.

The \emph{stabilizer} of the action $(X,G)$ at $x\in X$ is a subgroup of $G$ given by 
$$G_x=\{g\in G: gx=x\}.$$ 
Stabilizers of points in the same orbit are conjugate subgroups of $G$. An action $(X,G)$ is \emph{free} if all $x \in X$ have trivial stabilizers. An action $(X,G)$ is \emph{topologically free} if the set of points with trivial stabilizers is residual in $X$. An action $(X,G,\mu)$ is \emph{essentially free} if the set of points with trivial stabilizers has full measure with respect to $\mu$.
 It is immediate that a minimal dynamical system which is essentially free for some measure is topologically free, while there exist examples of minimal topologically free actions which are not essentially free, see \cite{AE2007,Joseph2024,GL2021,BergeronGaboriau2004}. An action $(X,G)$ is \emph{effective} if the only element of $G$ acting trivially on $X$ is the identity $1_G \in G$. 

There are many examples of group actions where the stabilizer of every point is non-trivial, for instance, actions of automorphism groups of rooted trees, see for instance \cite{Grigorchuk2011}. This raises many questions, such as: what subgroups can arise as stabilizers? How are the properties of stabilizers related to the analytic, ergodic or dynamical properties of the action, or to the algebraic properties of the acting group? The interest in this type of questions has been galvanized by the surge of activity in the study of invariant random subgroups (IRS) \cite{AGV2014}, i.e.~probability measures on the space of subgroups of a group $G$ invariant under the natural action of $G$ by conjugation, and by the study of uniform recurrent subgroups (URS), which characterise the set of stabilizers of points with trivial holonomy \cite{GW15,LM18,MT20}.

A simple generalization of an action where a typical point has trivial stabilizer, from the algebraic perspective, is an action where typical points have at most a finite number of distinct stabilizers, which are also finite subgroups of the acting group. This type of action is our main focus in this article.

\begin{defi}\label{almost-normal}
 A subgroup $H$ of a countable group $G$ is said to be \emph{almost normal} if it satisfies the following equivalent conditions:
\begin{enumerate}
    \item The normalizer $N_G(H)$ in $G$ has finite index in $G$.
 \item The subgroup $H$ is a normal subgroup of a subgroup of finite index in $G$.
\item The subgroup $H$ has finitely many conjugate subgroups.
\end{enumerate}
\end{defi}

Before we state our main results in Section \ref{results} we recall a few notions.

\subsection{Actions with no essential holonomy} Following \cite{GL2021,HL2025}, we introduce the notion of a dynamical system with no essential holonomy. This is a property analogous to that of an essentially free system, but in the case when no point of $(X,G,\mu)$ has trivial stabilizer. The idea is the following: instead of only considering whether an element $g \in G$ fixes a point $x \in X$, we look at the behavior of $g \in G$ on a neighborhood of its fixed point $x\in X$. Thus we consider points with trivial or non-trivial holonomy, the notion borrowed from foliation theory. 

\begin{defi}
Let $(X,G)$ be a dynamical system. Then $g \in G$ \emph{has trivial holonomy at} $x \in X$ if $g\in G_x$ and there exists an open neighborhood $V \owns x$ such that $g|V = {\rm id}$. If such a neighborhood does not exist, then $g$ \emph{has non-trivial holonomy at} $x$.

Moreover, a point $x \in X$ \emph{has trivial holonomy}, or \emph{is a point without holonomy}, if every $g \in G_x$ has trivial holonomy at $x$. If there exists $g \in G_x$ which has non-trivial holonomy at $x \in X$, then $x$ \emph{has non-trivial holonomy}, or \emph{is a point with holonomy}. 
\end{defi}

 It is known that the set of points with trivial holonomy is residual in $X$ \cite{EMT1977}, that is, points without holonomy are topologically typical. The situation is more complicated in the measure-theoretical setting. 

\begin{defi}\label{defi-essential}
    Let $(X,G, \mu)$ be an ergodic measure-preserving dynamical system with probability measure $\mu$. Then $(X,G,\mu)$ has \emph{no essential holonomy} with respect to the invariant measure $\mu$ if $\mu$-a.e.~point in $X$ has trivial holonomy. 
\end{defi}
 
 If $(X,G,\mu)$ in Definition \ref{defi-essential} is essentially free, then $x \in X$ has non-trivial stabilizer if and only if it has non-trivial holonomy \cite[p.~2003]{GL2021}. Thus a topologically free action with no essential holonomy is an essentially free action. 

\subsection{Locally quasi-analytic actions}\label{LQA-actions} Systems where every point has a non-trivial stabilizer can be divided into two classes: locally quasi-analytic actions and non-locally quasi-analytic actions. Locally quasi-analytic actions were introduced in \cite{ALC2009}, generalizing the notion of quasi-analytic actions on manifolds \cite{H1985} to topological spaces.

Let $d$ be a metric on $X$, compatible with the topology on $X$ (this will be our standing assumption on the metric in the rest of the paper). An action $(X,G)$ is \emph{locally quasi-analytic (LQA)} if there exists $\epsilon >0$ such that for all open $U \subset X$ with $\diam (U) < \epsilon$ and all $g \in G$, if $g|V = {\rm id}$ for some open $V \subset U$, then $g|U = {\rm id}$. Equivalently, if the actions of  $g_1,g_2 \in G$ coincide on an open subset $V$ of $U$, then they coincide on $U$. Thus an action is locally quasi-analytic, if the action of any $g \in G$ has unique extensions from sets of diameter less than $\epsilon$ to sets of diameter $\epsilon$. Locally quasi-analytic actions are `well-behaved' and this property, in particular, has a relation with the problem of realizability of group actions on topological spaces as transversal dynamical systems in real-analytic foliations of manifolds, see for instance \cite[Corollary 1.12]{HL2019}.

If an action $(X,G)$ is not locally quasi-analytic, then for every $\epsilon >0$ there is an open set $U \subset X$ with $\diam(U)< \epsilon$, and an open proper subset $V \subset U$ and $g \in G$ such that $g|V = {\rm id}$ and $g|U \ne {\rm id}$. If, in addition, $(X,G)$ is minimal, then this property holds for any open set $U$.
A related notion of a \emph{micro-supported} action was considered in \cite{LM18}. A micro-supported action is ~an effective action $(X,G)$ such that for every non-empty open set $U \subset X$ the set $G_U$ of elements $g \in G$, such that $g$ fixes every point in $X\backslash U$, is non-trivial. A micro-supported action $(X,G)$ is not locally quasi-analytic, see Section \ref{micro-supported}.

\subsection{URS and IRS} Let $S(G)$ be the space of subgroups of $G$ with Chaubaty-Fell topology, see Section \ref{sec-inv-random} for details. The group $G$ acts on $S(G)$ continuously by conjugation. Given any dynamical system $(X,G,\mu)$, there is a map ${\rm Stab}: X \to S(G): x \mapsto G_x$ which is continuous at the points with trivial holonomy, see \cite{Vorobets2012,GW15}. A \emph{uniformly recurrent subgroup (URS)}, introduced in \cite{GW15}, is a minimal closed invariant subset of $S(G)$ for the action by conjugation. An \emph{invariant random subgroup (IRS)} is an invariant probability measure on $S(G)$ \cite{AGV2014}.   The \emph{stabilizer URS} of a minimal dynamical system $(X,G)$ is the closure of the set of stabilizers of points with trivial holonomy in $S(G)$; it is the unique closed minimal invariant subset of the topological closure $\overline{{\rm Stab}(X)}$ in $S(G)$ (see \cite{GW15}). 
The IRS of a dynamical system $(X,G,\mu)$ is the push-forward measure $\nu = {\rm Stab}_*\mu$. Thus the question of when a topologically free action is essentially free, or, more generally, when a minimal dynamical system $(X,G, \mu)$ has no essential holonomy, can be rephrased in terms of URS and IRS. Namely, a minimal dynamical system $(X,G, \mu)$ has no essential holonomy if and only if the stabilizer URS is the support of the IRS of the dynamical system $(X,G,\mu)$.
 
\subsection{Main results}\label{results} We are now ready to state our main results. Our first theorem provides equivalent conditions for an action to have a finite almost normal subgroup as a topologically typical stabilizer.

 \begin{theo}\label{theo-1}
     Let $(X,G)$ be a minimal group action. Then the following statements are equivalent:
     \begin{enumerate}
         \item There exists a point $x \in X$ with finite stabilizer.
         \item There exists an almost normal finite subgroup $H$ of $G$ such that $x \in X$ has trivial holonomy if and only if  $G_x = gHg^{-1}$, for some $g \in G$.  
         \item The stabilizer URS of $(X,G)$ is a finite set of subgroups in $S(G)$.  
         \item Suppose, in addition, $X$ has a metric $d$. Then the action $(X,G)$ is locally quasi-analytic, and there exists a point $x \in X$ with a finite stabilizer.
     \end{enumerate}
  \end{theo}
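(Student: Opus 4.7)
The plan is to prove the equivalences via the cycle $(1) \Rightarrow (2) \Rightarrow (3) \Rightarrow (1)$ combined with $(1) \Rightarrow (4)$; the implication $(4) \Rightarrow (1)$ is immediate. The easier directions: $(2) \Rightarrow (1)$ is clear since $X_0$ is nonempty (residual) and each $x \in X_0$ has $|G_x| = |H| < \infty$. For $(2) \Rightarrow (3)$, almost normality makes $\{gHg^{-1}\}$ finite, and these finite subgroups are distinguished by the finite set $F := \bigcup_g gHg^{-1}$ in the Chabauty topology of $S(G)$, forming a discrete $G$-invariant set equal to the stabilizer URS by minimality of conjugation. For $(3) \Rightarrow (2)$ (hence $(3) \Rightarrow (1)$), minimality of the URS forces a single conjugacy class of an almost-normal $H$, and continuity of $\Stab$ on $X_0$ into the discrete finite URS makes it locally constant, identifying $G_y$ with a conjugate of $H$ for every $y \in X_0$; finiteness of $H$ follows from the URS elements being finite subgroups.

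The central step is $(1) \Rightarrow (2)$, with the main obstacle being almost normality of $H$. Starting from $x_0$ with finite stabilizer $H_0$, I would first produce a trivial-holonomy $y_0 \in X_0$ with $G_{y_0} =: H$ finite. Upper semi-continuity of $\Stab$ along $y_n \to x_0$ gives $\limsup G_{y_n} \subseteq H_0$, and combining the finite number of subgroups of $H_0$ with a Baire-category argument in a neighborhood of $x_0$ yields such a $y_0$. To show $H$ is almost normal, I would argue by contradiction: assume $\{g_n H g_n^{-1}\}$ are pairwise distinct for some sequence $g_n$. By compactness of $X$ and of $S(G)$ in the Chabauty topology, I pass to a subsequence with $g_n y_0 \to z$ in $X$ and $g_n H g_n^{-1} \to K$ in $S(G)$, where $K \subseteq G_z$ by upper semi-continuity and $K$ is finite with $|K| \leq |H|$. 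By approximating $z$ with nearby trivial-holonomy points and invoking density of the orbit of $y_0$ together with continuity of $\Stab$ on $X_0$, one argues that $K$ must itself be a conjugate of $H$, contradicting the distinctness. The delicate point is handling the possible Chabauty degeneration $|K| < |H|$, which requires combined use of minimality of $(X,G)$ and minimality of the $G$-conjugation action on the URS.

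For $(1) \Rightarrow (4)$, with $(2)$ in hand, the set $S := \bigcup_{g \in G} gHg^{-1}$ is finite, and any $g \in G$ acting trivially on a nonempty open subset of $X$ must belong to $S$. For each $g \in S \setminus \{1\}$, the $G$-conjugacy class $\Conj(g)$ is finite (bounded by $[G:N_G(H)] \cdot |H|$), so $\bigcup_{g'' \in \Conj(g)} \partial I_{g''}$ is a finite union of closed sets that is $G$-invariant under conjugation and contained in the meager set $X \setminus X_0$ (any $y \in X_0 \cap F_{g''}$ has trivial holonomy for $g''$, hence $y \in I_{g''}$). By minimality of $(X, G)$ this invariant closed set must be empty or all of $X$, and being meager it is empty. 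Hence $\partial I_g = \emptyset$ and $I_g$ is clopen. By compactness of $X$, $\dist(I_g, X \setminus I_g) > 0$, and $\epsilon := \min_{g \in S \setminus \{1\}} \dist(I_g, X \setminus I_g) > 0$ serves as the LQA constant: for any open $U$ with $\diam(U) < \epsilon$ and $g \in G$ with $g|_V = \mathrm{id}$ on a nonempty open $V \subseteq U$, one has $V \subseteq I_g$ so $U \cap I_g \neq \emptyset$, forcing $U \subseteq I_g$ and $g|_U = \mathrm{id}$. The principal obstacle throughout is the almost normality step in $(1) \Rightarrow (2)$.
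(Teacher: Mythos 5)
Your overall architecture (cycle through (1)--(3), then bolt on (4) via a local quasi-analyticity argument) is reasonable, and your proof of $(1)\Rightarrow(4)$ via the clopen sets $I_g$ and minimality is a legitimate alternative to the paper's Lemma \ref{lemma-LQA}. But the proposal has a genuine gap at exactly the point you flag as ``delicate'': the almost normality of $H$ in $(1)\Rightarrow(2)$. Your contradiction argument produces a Chabauty limit $K$ of pairwise distinct conjugates $g_nHg_n^{-1}$ with $K\subseteq G_z$ and $|K|\le|H|$, and the contradiction only materializes if you can force $|K|=|H|$ (so that the convergence stabilizes and the $g_nHg_n^{-1}$ are eventually equal). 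You write that handling the degeneration $|K|<|H|$ ``requires combined use of minimality of $(X,G)$ and minimality of the $G$-conjugation action on the URS,'' but no argument is given, and this is the entire content of the theorem: ruling out that the orbit of $H$ in $S(G)$ accumulates on strictly smaller subgroups. The paper closes this gap with a different device (Proposition \ref{equivalence-URS}): it chooses $H$ to be a finite subgroup of \emph{minimal cardinality} in $\overline{\Stab(X)}$, observes that ``having cardinality $\le|H|$'' is an admissible property in the sense of Glasner--Weiss (inherited by subgroups, preserved under increasing unions), and invokes \cite[Corollary 2.5]{GW15} to propagate the bound $|L|\le|H|$ to every element $L$ of the minimal set inside $\overline{\Conj(H)}$; minimality of $|H|$ then forces $|L|=|H|$, hence $L\in\Conj(H)$, so the URS is the finite orbit $\Conj(H)$ and $H$ is almost normal. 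You would need either to reproduce this minimal-cardinality/admissibility mechanism or supply an independent argument excluding cardinality drop; as written, the proof does not go through.

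A secondary soft spot: your production of a trivial-holonomy point $y_0$ with finite stabilizer from the point $x_0$ with finite stabilizer $H_0$ is also not justified. Upper semi-continuity of $\Stab$ at $x_0$ only controls Chabauty \emph{cluster points} of the stabilizers $G_{y_n}$ (any $g$ lying in infinitely many $G_{y_n}$ must lie in $H_0$); it does not bound the cardinality of any individual $G_{y_n}$, so a priori every trivial-holonomy point near $x_0$ could have infinite stabilizer, and the Baire-category step does not obviously repair this. The paper's minimal-cardinality argument sidesteps the issue entirely, since it works directly in $\overline{\Stab(X)}$ rather than with a particular trivial-holonomy point. Finally, note that in $(3)\Rightarrow(2)$ you assume the elements of the URS are finite subgroups; that is the intended reading of statement (3) (compare Proposition \ref{equivalence-URS}(2)), but it should be stated explicitly rather than inferred from the word ``finite set.''
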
   

  The equivalence of statement (4) to statements (1) - (3) above is a consequence of a more general statement below, which does not require finiteness of stabilizers.

\begin{theo}\label{theo-LQA1}
    Let $(X,G)$ be a minimal dynamical system, and $X$ be a metric space. Then $(X,G)$ is locally quasi-analytic if and only if the stabilizer URS of $(X,G)$ is a finite set of conjugate subgroups if and only if the stabilizers of points with trivial holonomy are conjugate subgroups of an almost normal subgroup $H$ of $G$.
\end{theo}

An analog of Theorem \ref{theo-LQA1} in a more restrictive setting of equicontinuous group actions on Cantor sets was proved in \cite[Theorem 1.7]{GL2021}. 

Recall that $G$ is \emph{allosteric} if there exists a minimal topologically free action of $G$ on a compact Hausdorff space which is not essentially free w.r.t.~an ergodic measure. Further, recall that groups $G_1$ and $G_2$ are \emph{commensurable} if there are finite index subgroups $H_1 < G_1$ and $H_2<G_2$ which are isomorphic. By \cite[Theorem 2.9]{Joseph2024} the relation of being allosteric is preserved under commensurability of groups. In Corollary \ref{allosetric-sufficient} we prove that allostery is preserved under quotients by finite normal subgroups, which leads to the following result.

\begin{theo}\label{coro-IRS}
If a minimal ergodic action $(X,G,\mu)$ satisfies one of the conditions of Theorem \ref{theo-1} with group $H$ as in Theorem \ref{theo-1},(2), and, in addition, $G$ is not allosteric and $N_G(H)/H$ is commensurable to $N_G(H)$, then: 
\begin{enumerate}
    \item The IRS associated to $(X,G,\mu)$ is an atomic probability measure on $S(G)$.
    \item The IRS associated to $(X,G,\mu)$ is supported on the stabilizer URS of $(X,G,\mu)$, i.e.~$(X,G,\mu)$ has no essential holonomy.
\end{enumerate}
 \end{theo}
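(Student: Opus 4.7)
Statement~(1) is an immediate consequence of~(2): Theorem~\ref{theo-1}(3) gives that the stabilizer URS of $(X,G,\mu)$ is a finite subset of $S(G)$, so any probability measure supported on it is automatically atomic. The task therefore reduces to proving~(2), and the strategy is to derive a contradiction from the non-allostery of a suitable commensurable group if essential holonomy occurs. Begin with two preliminary reductions. Let $K := \bigcap_{g\in G}gHg^{-1}$; this is a finite normal subgroup of $G$ contained in every conjugate of $H$. For each $k\in K$, the closed set $\mathrm{Fix}(k)$ contains every trivial-holonomy point by Theorem~\ref{theo-1}(2); as such points are residual and hence dense, $\mathrm{Fix}(k)=X$, so $K$ acts trivially on $X$ and the action factors through $\bar G := G/K$, which is not allosteric by Corollary~\ref{allosetric-sufficient}. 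Setting $N := N_G(H)$ and $L := N/H$, the commensurabilities between $L$ and $N$ (hypothesis) and between $N$ and $G$ (finite index), together with the commensurability invariance of allostery (\cite[Theorem~2.9]{Joseph2024}), yield that $L$ is not allosteric either.

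Next, consider the closed $N$-invariant set $X_H := \{x\in X : Hx=x\}$, on which $H$ acts trivially, so that $L = N/H$ acts on $X_H$. Since $X = \bigcup_{gN \in G/N} gX_H$ is a finite union of $\mu$-equivalent translates, one has $\mu(X_H) \geq 1/[G:N] > 0$, and the normalized restriction $\mu_H := \mu|_{X_H}/\mu(X_H)$ is an $L$-invariant probability measure on $X_H$. Decomposing $\mu_H$ into $L$-ergodic components, I select a component $\nu$ whose support (or a suitable $L$-minimal closed subsystem thereof) gives a minimal topologically free action $(Y, L, \nu_Y)$: the key checks are that the $G$-invariant residual set of trivial-holonomy points in $X$ intersects $X_H$ in an $N$-invariant dense $G_\delta$ (using the local quasi-analyticity from Theorem~\ref{theo-1}(4) to identify it with the interior of $X_H$), and that this intersection propagates by $L$-minimality to a residual subset of $Y$ on which $G_y = H$ and hence $L_y = \{e\}$. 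Applying non-allostery of $L$ to the minimal topologically free ergodic system so constructed forces essential freeness: $\nu_Y$-almost every $y \in Y$ satisfies $L_y = \{e\}$, equivalently $G_y \cap N = H$.

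The concluding step is to promote $G_y \cap N = H$ to $G_y = H$ on a $\nu_Y$-full set and then unfold back to $\mu$. The promotion combines $L$-ergodicity with the continuity of the stabilizer map at trivial-holonomy points (\cite{Vorobets2012,GW15}) and rules out positive-measure sets of $y$ whose stabilizer contains elements outside $N_G(H)$. With $G_y = H$ in hand, the LQA property forces $H$ to act trivially on a neighborhood of $y$, so $y$ has trivial $G$-holonomy. Reassembling across the $L$-ergodic decomposition and across the finite cover $X = \bigcup_{gN}gX_H$ then shows $\mu$-almost every $x \in X$ has trivial holonomy, proving~(2). The principal obstacle is the construction in the middle paragraph: ensuring that an $L$-minimal subsystem with an $L$-ergodic invariant measure and topologically free behavior can be extracted from $(X_H, L, \mu_H)$ --- delicate since $L$ need not be amenable, so a generic $L$-minimal closed subset need not carry an invariant measure. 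A secondary obstacle is the promotion step, which is precisely where the commensurability hypothesis between $L$ and $N$ must bite, by constraining the structure of stabilizers meeting $N$ exactly in $H$.
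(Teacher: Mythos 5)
Your overall strategy is the paper's: pass to the action of $L=N_G(H)/H$ on the set of points fixed by $H$, apply \cite[Theorem 2.9]{Joseph2024} twice (once for the finite-index inclusion $N_G(H)<G$, once for the commensurability hypothesis) to conclude that $L$ is not allosteric, deduce essential freeness of the induced system, and unfold to get $\mu(X_H)=1$. However, the two ``obstacles'' you flag at the end are genuine gaps in your argument, and both are closed by a single ingredient you never invoke: the dynamically defined clopen partition of Theorem \ref{converse} and Corollary \ref{converse-minimal}. Since $(X,G)$ is minimal and $H$ is almost normal with $X_H\neq\emptyset$, the sets $\overline{X_H(g_i)}$, with $g_i$ ranging over coset representatives of $G/N_G(H)$, form a $G$-invariant clopen partition of $X$, equivalently a factor map $\phi:X\to G/N_G(H)$; moreover your set $\{x: Hx=x\}$ coincides with the cell $\overline{X_H(1_G)}=\phi^{-1}(N_G(H))$ (one inclusion is continuity of the action, the other is Claim 1 in the proof of Theorem \ref{converse}). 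Consequently: (a) the $N_G(H)$-action on this clopen cell is \emph{automatically} minimal, being the return system of a minimal action to a cell of a finite invariant clopen partition, and the normalized restriction $[G:N_G(H)]\,\mu|_{\overline{X_H(1_G)}}$ is automatically $L$-ergodic (an invariant subset of intermediate measure would translate to a $G$-invariant subset of $X$ of intermediate measure). So no ergodic decomposition and no extraction of a minimal subsystem is needed, and the amenability worry you raise does not arise. (b) The ``promotion'' from $G_y\cap N_G(H)=H$ to $G_y=H$ is immediate from equivariance of $\phi$: if $x\in\overline{X_H(1_G)}$ and $g\in G_x$, then $g$ fixes $\phi(x)=N_G(H)$ in $G/N_G(H)$, so $g\in N_G(H)$; hence $G_x\subseteq N_G(H)$ and a trivial $L$-stabilizer already forces $G_x=H$, i.e.\ the set of points with trivial $L$-stabilizer is exactly $X_H(1_G)$. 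In particular this step has nothing to do with the commensurability hypothesis, whose sole role is the transfer of non-allostery from $N_G(H)$ to $N_G(H)/H$.

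Two smaller points. The reduction through the core $K=\bigcap_{g\in G}gHg^{-1}$ and the quotient $G/K$ in your first paragraph is correct but never used afterwards and can be deleted. And the trivial-holonomy points do not constitute ``the interior of $X_H$'' (that set is clopen, so equal to its interior); they form a dense $G_\delta$ inside it, which is all that topological freeness of the $L$-action requires.
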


Residually finite groups which are not allosteric satisfy the commensurability condition above, see Corollary \ref{finite-summarize-RF}, and we obtain the following corollary.

\begin{coro}
    Let $G$ be a residually finite group. If $G$ is not allosteric, then every ergodic action $(X,G,\mu)$ with finite stabilizers has no essential holonomy.
\end{coro}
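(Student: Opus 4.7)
The plan is to deduce this corollary directly from Theorem \ref{coro-IRS}: once the commensurability condition on $N_G(H)$ and $N_G(H)/H$ is verified for the almost normal finite subgroup $H$ supplied by Theorem \ref{theo-1}, there is nothing left to prove. Thus the substantive content of the corollary lies in the promised Corollary \ref{finite-summarize-RF}, an elementary group-theoretic statement about residually finite groups and their finite subgroups.

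First I would apply Theorem \ref{theo-1} to $(X,G,\mu)$. Since the action is (minimal) ergodic and has a point with finite stabilizer, the theorem yields a finite almost normal subgroup $H<G$ such that the stabilizers at points of trivial holonomy are exactly the $G$-conjugates of $H$. Writing $N:=N_G(H)$, almost normality gives $[G:N]<\infty$. With the hypothesis that $G$ is not allosteric also in hand, invoking Theorem \ref{coro-IRS} requires only that $N$ be commensurable with $N/H$.

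To establish this, I would use residual finiteness of $G$ to ``split $H$ off'' up to finite index. Since $H$ is finite, residual finiteness provides, for each non-identity $h\in H$, a finite-index normal subgroup of $G$ missing $h$; intersecting these finitely many subgroups yields a finite-index normal subgroup $K\triangleleft G$ with $K\cap H=\{1_G\}$. Setting $L:=K\cap N$, one has $[N:L]<\infty$ (because $K$ has finite index in $G$) and $L\cap H=\{1_G\}$, so the quotient map $N\to N/H$ restricts to an injective homomorphism $L\hookrightarrow N/H$ whose image $LH/H$ has index at most $[N:L]$ in $N/H$. Hence $L$ realizes $N$ and $N/H$ as commensurable groups, which is the missing hypothesis of Theorem \ref{coro-IRS}; the conclusion that $(X,G,\mu)$ has no essential holonomy follows at once. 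I do not anticipate a serious obstacle — the only delicate point is the initial choice of $K$, and this is a standard trick for finite subgroups of residually finite groups.
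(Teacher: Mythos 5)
Your proposal is correct and takes essentially the same route as the paper: the paper likewise reduces the corollary to Theorem \ref{coro-IRS} and verifies the commensurability of $N_G(H)$ and $N_G(H)/H$ in Corollary \ref{finite-summarize-RF} by exactly your argument, producing a finite-index subgroup $\Gamma < G$ with $\Gamma \cap H = \{1_G\}$ and noting that $\Gamma \cap N_G(H)$ injects onto a finite-index subgroup of $N_G(H)/H$. (Your write-up is in fact slightly more careful, since the paper's phrasing ``$(\Gamma\cap N_G(H))/H$'' should really be the image $(\Gamma\cap N_G(H))H/H$, as you have it.)
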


Next we are interested in the question how actions with almost normal finite stabilzers arise. Our next result shows that such actions can be constructed as finite factors of essentially free systems. Recall that $N_G(H)$ denotes the normalizer of $H$ in $G$.

\begin{theo}\label{characterization-factor}
Let $(X,G)$ be a minimal dynamical system, and let $H$ be a finite subgroup of $G$. Then the following is true.
\begin{enumerate}
\item If $(X,G)$ is topologically free, and $H$ is  almost normal,  then the following sentences are equivalent:

\medskip

\begin{itemize}
\item[(1.1)] There exists a factor map $\phi:X\to G/N_G(H)$,  where $G$ acts on $G/N_G(H)=\{gN_G(H): g\in G\}$ by left multiplication.

\item[(1.2)] There exist a minimal dynamical system $(Y,G)$,  and a $|H|$-to-one factor map $\pi:X\to Y$ such that 
$$
\pi^{-1}Y_H=\{x\in X: G_x=\{1_G\}\},
$$
where $Y_H$ is the set of all $y\in Y$ such that $G_y$ is conjugate to $H$.  In this case  ${\rm Conj}(H)$ is the stabilizer URS of $(Y,G)$, and, if $Y$ is a metric space, $(Y,G)$ is locally quasi-analytic.
\end{itemize}

\item If $(X,G,\mu)$ is essentially free with respect to an ergodic measure $\mu$,  then the following sentences are equivalent:

\medskip

\begin{itemize}
\item[(2.1)] $H$ is almost normal and there exists a factor map $\phi:X\to G/N_G(H)$.

\item[(2.2)] There exist a minimal dynamical system $(Y,G)$,  and a $|H|$-to-one factor map $\pi:X\to Y$ such that 
$$
\pi^{-1}Y_H=\{x\in X: G_x=\{1_G\}\}.
$$

\item[(2.3)] There exist a minimal  dynamical system $(Y,G)$ and a $|H|$-to-one $\pi_*\mu$-a.e.~ factor map $\pi:X\to Y$ such that $\pi_*\mu(Y_H)=1$. In this case, ${\rm Conj}(H)$ is the stabilizer URS of $(Y,G,\pi_*\mu)$, and its IRS is supported on its stabilizer URS. In particular, $(Y,G,\pi_*\mu)$ has no essential holonomy, and, if $Y$ is a metric space, $(Y,G)$ is locally quasi-analytic.
\end{itemize}
\end{enumerate}
\end{theo}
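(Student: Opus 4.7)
The plan is to establish each of the implications in turn, with a single construction of the quotient factor $\pi: X \to Y$ handling the forward directions in both parts. For $(1.1) \Rightarrow (1.2)$: almost normality of $H$ makes $G/N_G(H) = \{g_1 N_G(H), \ldots, g_k N_G(H)\}$ finite, so $\phi$ produces a clopen partition $X = Z_1 \sqcup \cdots \sqcup Z_k$ with $Z_i := \phi^{-1}(g_i N_G(H))$. Each conjugate $H_i := g_i H g_i^{-1}$ is normal in $g_i N_G(H) g_i^{-1}$, which is the setwise $G$-stabilizer of $Z_i$, so $H_i$ acts on $Z_i$. I would define the relation $x \sim x'$ iff some $i$ satisfies $x, x' \in Z_i$ and $x' \in H_i x$, and set $Y := X/\sim$ with quotient map $\pi$. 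The next step is to check that $\sim$ is closed (since each $H_i$ is finite) and $G$-invariant (the identity $gH_i g^{-1} = H_j$ holds whenever $gZ_i = Z_j$, by normality), so that $G$ acts continuously on the compact Hausdorff quotient $Y$ and $(Y, G)$ is minimal as a factor of $(X, G)$. A direct computation gives $G_{\pi(x)} = H_i G_x$ for $x \in Z_i$ (a subgroup since $G_x \subseteq g_i N_G(H) g_i^{-1} = N_G(H_i)$), from which one reads off that $\pi$ is $|H|$-to-one on the residual set $\{x : G_x = \{1_G\}\}$, that $\pi^{-1}(Y_H)$ coincides with this set, and that ${\rm Conj}(H)$ is the stabilizer URS of $(Y, G)$. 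Local quasi-analyticity of $(Y, G)$ when $Y$ is metric then follows from Lemma \ref{lemma-LQA1}.

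For $(1.2) \Rightarrow (1.1)$: the hypothesis that ${\rm Conj}(H)$ is the stabilizer URS of $(Y, G)$ forces $H$ to have finitely many conjugates, hence to be almost normal. The $G$-equivariant bijection $gN_G(H) \mapsto gHg^{-1}$ identifies $G/N_G(H)$ with ${\rm Conj}(H)$. The stabilizer map $y \mapsto G_y$ is continuous at every point of trivial holonomy (a residual subset of $Y$) and takes values in the finite discrete set ${\rm Conj}(H)$, so by continuity it extends to a $G$-equivariant continuous factor $\psi: Y \to {\rm Conj}(H)$; the composition $\psi \circ \pi$ gives the desired $\phi: X \to G/N_G(H)$.

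For Part (2), a minimal essentially free action is topologically free (as noted in the introduction), so Part (1) yields $(2.1) \Rightarrow (2.2)$ directly. For $(2.2) \Rightarrow (2.3)$, essential freeness gives $\mu(\{x : G_x = \{1_G\}\}) = 1$, whence $\pi_*\mu(Y_H) = 1$ by the equality in (1.2) and $\pi$ is $|H|$-to-one on a $\mu$-full set; the no-essential-holonomy, IRS, URS and LQA conclusions then follow from Theorem \ref{coro-IRS} and Lemma \ref{lemma-LQA1} applied to $(Y, G, \pi_*\mu)$. For $(2.3) \Rightarrow (2.1)$, the hypothesis $\pi_*\mu(Y_H) = 1$ provides a point of $Y$ with finite stabilizer conjugate to $H$, so Theorem \ref{theo-1} applied to $(Y, G)$ yields that $H$ is almost normal and that ${\rm Conj}(H)$ is the stabilizer URS of $(Y, G)$; the continuous factor $\psi: Y \to G/N_G(H)$ then exists as in the previous paragraph, and composing with $\pi$ (extending continuously through minimality of $(X, G)$ and the finite discreteness of the target) gives the required $\phi: X \to G/N_G(H)$.

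The main technical obstacle sits in $(1.1) \Rightarrow (1.2)$, specifically in the equality $\pi^{-1}(Y_H) = \{x \in X : G_x = \{1_G\}\}$. The stabilizer formula $G_{\pi(x)} = H_i G_x$ shows that $\pi(x) \in Y_H$ is equivalent to $G_x \subseteq H_i$, so one must rule out points whose non-trivial stabilizer is entirely contained in some conjugate $H_i$. The strategy for this step is to combine topological freeness, which makes each fixed-point set ${\rm Fix}(h)$ nowhere dense for $h \ne 1_G$, with the minimality of $(X, G)$ and the $G$-invariance of the partition $\{Z_i\}$, possibly refining the quotient construction so that $H_i$ acts freely on the relevant part of $Z_i$. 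A secondary subtlety arises in $(2.3) \Rightarrow (2.1)$ when the $\pi_*\mu$-a.e.~factor must be upgraded to a genuinely continuous factor, where the finite discreteness of $G/N_G(H)$ together with minimality of $(X,G)$ will carry the argument.
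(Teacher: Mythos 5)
Your construction of the quotient $(Y,G)$ is the same as the paper's (Proposition \ref{factor-stabilizer}): collapse each $Z_i=\phi^{-1}(g_iN_G(H))$ along the orbits of $H_i=g_iHg_i^{-1}$, check the relation is closed and $G$-invariant, and compute $G_{\pi(x)}=H_iG_x$ with $|\pi^{-1}\{\pi(x)\}|=|H|/|H_i\cap G_x|$. The logical skeleton of Part (2) is also the paper's. But two steps that you describe rather than prove are exactly where the real work lies. First, the equality $\pi^{-1}Y_H=\{x:G_x=\{1_G\}\}$ and the fact that $\pi$ is $|H|$-to-one \emph{everywhere} (not just on the residual set, which is what (1.2) requires by the paper's definition of $n$-to-one) both reduce to the claim that $G_x\cap gHg^{-1}=\{1_G\}$ for \emph{every} $x\in X$ and $g\in G$. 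You flag this as ``the main technical obstacle'' and propose ``possibly refining the quotient construction'', but no refinement is needed and none would help: the paper's Lemma \ref{key} proves the claim as stated. The argument you sketch (nowhere-density of fixed-point sets plus minimality plus finiteness of $\bigcup_g gHg^{-1}$) can indeed be completed --- a nontrivial $l\in G_x\cap gHg^{-1}$ would force the dense orbit $O(x)$, hence all of $X$, into the finite closed union $\bigcup_{c\in C\setminus\{1_G\}}{\rm Fix}(c)$, contradicting the existence of a point with trivial stabilizer --- but as written the step is not carried out, and the hedging suggests you were not sure the construction survives it.

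Second, in $(1.2)\Rightarrow(1.1)$ and again in $(2.3)\Rightarrow(2.1)$ you assert that the stabilizer map, locally constant on the residual set of trivial-holonomy points with values in the finite set ${\rm Conj}(H)$, ``extends by continuity'' to a factor map $Y\to G/N_G(H)$. This is not automatic: the stabilizer map is discontinuous precisely at points with nontrivial holonomy, and the well-definedness of the extension is equivalent to the pairwise disjointness of the closures $\overline{Y_H(g_i)}$. That disjointness is the content of Theorem \ref{converse}(2),(4) and Corollary \ref{converse-minimal}, whose proof (Claim 1) requires a genuine argument using the normal core $\bigcap_g gN_G(H)g^{-1}$ and Lemma \ref{auxiliar} to rule out a point with holonomy whose stabilizer contains two distinct conjugates of $H$. ``By continuity'' does not substitute for this. (A minor additional point: in $(2.3)\Rightarrow(2.1)$ the cleanest way to get almost normality of the given $H$, rather than of some unidentified finite subgroup produced by Theorem \ref{theo-1}, is Lemma \ref{finite-conjugacy-classes} applied to $\pi_*\mu(Y_H)=1>0$.)
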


Finally, we ask whether all actions where almost every point has an almost normal  finite stabilizer arise as factors with finite fibers of essentially free actions. For a group $G$, denote by $R(G)$ the intersection of all finite index normal subgroups in $G$. Recall that $Y_H$, for a subgroup $H$ of $G$, denotes the set of all $y\in Y$ such that $G_y$ is conjugate to $H$. 

\begin{prop}\label{theo-main-res-finite}
   Suppose that there exists an almost normal finite subgroup $H$ of $G$ such that $H\cap R(G)=\{1_G\}$. Let $(Y,G)$ be a dynamical system such that $Y_H\neq\emptyset$. Then there exist a dynamical system $(X,G)$, a factor map $\pi:X\to Y$, and a constant $m\geq 1$ such that
  $$
  \pi^{-1}Y_H\subseteq\{x\in X: G_x=\{1_G\}\},$$  
  and
  $$
 |\pi^{-1}\{y\}|\leq m, \mbox{ for all } y\in Y_H.
 $$
Moreover,
\begin{enumerate}
\item If $(Y,G)$ is minimal, then $(X,G)$ is minimal and topologically free.

\item If there exists an ergodic measure $\mu$ of $(Y,G)$ such that $\mu(Y_H)=1$, then $(X,G)$ is essentially free with respect to a lift of $\mu$. 
\end{enumerate}
\end{prop}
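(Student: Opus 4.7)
I would build $X$ as a closed invariant subset of a natural finite extension of $Y$ twisted by a finite quotient of $G$ that separates the elements of $H$. The hypothesis $H \cap R(G) = \{1_G\}$ together with finiteness of $H$ lets me pick, for each $h \in H \setminus \{1_G\}$, a finite-index normal subgroup $N_h \triangleleft G$ with $h \notin N_h$; the intersection $N := \bigcap_{h \in H \setminus \{1_G\}} N_h$ is then a finite-index normal subgroup of $G$ with $H \cap N = \{1_G\}$. Normality of $N$ upgrades this to $\gamma H \gamma^{-1} \cap N = \gamma(H \cap N)\gamma^{-1} = \{1_G\}$ for every $\gamma \in G$.

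Now form $Z := Y \times G/N$ with diagonal action $g \cdot (y, g'N) := (gy, gg'N)$, let $p \colon Z \to Y$ be the first projection, and set $m := [G:N]$. Since $N$ is normal, $N$ is the stabilizer of every element of $G/N$ under left multiplication, so $G_{(y,f)} = G_y \cap N$; for $y \in Y_H$ the subgroup $G_y$ is conjugate to $H$, hence $G_{(y,f)} = \{1_G\}$ and the entire fiber of $p$ above $Y_H$ consists of free points. I would define $X$ by taking a minimal closed $G$-invariant subset of $Z$ when $(Y,G)$ is minimal, and $X := Z$ otherwise; let $\pi := p|_X$. In either case $\pi(X)$ is closed and $G$-invariant, so $\pi(X) = Y$ (by minimality in the first case, trivially in the second), yielding a factor map with fibers of size at most $m$ and $\pi^{-1}(Y_H) \subseteq \{x : G_x = \{1_G\}\}$.

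For part (1), $X$ is minimal by construction, and any $x_0 \in \pi^{-1}(y_0)$ with $y_0 \in Y_H$ is a free point. For every $g \neq 1_G$, $\mathrm{Fix}_X(g)$ must have empty interior: otherwise minimality would place a $G$-translate of $x_0$ inside that interior, forcing a conjugate of $g$ to lie in $G_{x_0} = \{1_G\}$. Hence the free points contain the dense $G_\delta$ set $X \setminus \bigcup_{g \neq 1_G} \mathrm{Fix}_X(g)$, and $(X,G)$ is topologically free. For part (2), given an ergodic $\mu$ on $Y$ with $\mu(Y_H) = 1$, the normalized fiber-counting formula
\[
\tilde\mu(A) \;:=\; \int_Y \frac{|A \cap \pi^{-1}(y)|}{|\pi^{-1}(y)|}\, d\mu(y)
\]
defines a $G$-invariant Borel probability measure on $X$ with $\pi_* \tilde\mu = \mu$; invariance is a direct computation using $g \cdot \pi^{-1}(y) = \pi^{-1}(gy)$, preservation of fiber cardinality, and $G$-invariance of $\mu$. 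Decomposing $\tilde\mu = \int \lambda_\alpha \, d\omega(\alpha)$ into ergodic components on $X$, each $\pi_* \lambda_\alpha$ is a factor of an ergodic system, hence ergodic on $Y$; uniqueness of the ergodic decomposition applied to $\mu = \int \pi_* \lambda_\alpha \, d\omega(\alpha)$ then forces $\pi_* \lambda_\alpha = \mu$ for $\omega$-a.e.~$\alpha$. Picking such a $\lambda$ yields $\lambda(\pi^{-1}(Y_H)) = \mu(Y_H) = 1$, and since every point of $\pi^{-1}(Y_H)$ is free, $(X,G,\lambda)$ is essentially free with lift $\lambda$ of $\mu$.

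The principal obstacle is producing the $G$-invariant lift $\tilde\mu$: for a general, possibly non-amenable $G$, no Markov--Kakutani-style fixed-point theorem is available, and the presence of uniformly finite $G$-equivariant fibers is exactly what makes the normalized-counting-measure construction invariant. A secondary point is that when the hypotheses of both (1) and (2) hold simultaneously, the same $X$ (the minimal subset of $Z$ from part (1)) serves both parts, because the fiber-counting construction is valid on any closed $G$-invariant subset of $Z$ with $\pi(X) = Y$. The remaining steps -- measurability of the integrand (immediate since $G/N$ is finite and $X$ is closed in $Z$), topological freeness from the existence of a single free point, and ergodicity passing through factor maps -- are routine.
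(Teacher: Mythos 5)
Your proposal is correct and follows essentially the same route as the paper: the same finite-index normal subgroup $N$ with $H\cap N=\{1_G\}$ obtained from $H\cap R(G)=\{1_G\}$, the same product system $Y\times G/N$ with the diagonal action and projection $\pi$, a minimal component for part (1), and (in part (2)) a lift that coincides with the paper's $\mu\times\lambda$ when $X$ is the full product. Your explicit fixed-point-set argument for topological freeness is just an inlined version of the lemma the paper cites, and the ergodic-decomposition step is harmless but unnecessary, since the statement only asks for a lift, not an ergodic one.
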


Theorem \ref{characterization-factor} and Proposition \ref{theo-main-res-finite} show that the property of having no essential holonomy is preserved under finite factors, and almost finite-to-one extensions of dynamical systems.

In particular, the hypothesis of Proposition \ref{theo-main-res-finite} is satisfied if the group $G$ is residually finite. We give a few extra conditions under which Proposition \ref{theo-main-res-finite} holds in Proposition \ref{sufficient-condition}, Lemma \ref{sufficient-condition-1} and Corollary \ref{quasi-general-case}. Examples of families of actions with almost normal stabilizers are given in Examples \ref{ex-farber} and \ref{FC-central}.

The rest of the paper is organized as follows. In Section \ref{preliminaries} we recall the basic notions from dynamics and group theory. In Section \ref{sec-holonomy} we prove Theorems \ref{theo-1}, \ref{coro-IRS}, and \ref{theo-LQA1}. In Section \ref{sec-factors} we study the factors of essentially free systems and prove Theorem \ref{characterization-factor}. In Section \ref{sec-extensions} we prove Proposition \ref{theo-main-res-finite}.

\section{Preliminaries}\label{preliminaries}

\subsection{Factors and extensions} Let $(X,G)$ and $(Y,G)$ be dynamical systems. A {\it factor map} from $(X, G)$ to $(Y,G)$ is a continuous surjective map $\pi:X\to Y$ such that $\pi(gx)=g\pi(x)$, for every $x\in X$. If there is $n\geq 1$ such that $|\pi^{-1}\{y\}|=n$, for every $y\in Y$, we say that $\pi$ is {\it $n$-to-one}.  If $\mu$ is an invariant measure of $(X,G)$, the pushforward $\pi_*\mu$  of $\mu$  is an invariant measure of $(Y,G)$. If the set of $y\in Y$ such that $|\pi^{-1}\{y\}|<\infty$ has full measure with respect to an invariant measure $\nu$ of $(Y,G)$, we say that $\pi$ is {\it finite-to-one $\nu$-a.e.} If $\nu$ is an invariant measure of $(Y,G)$, a {\it lift} of $\nu$ is an invariant measure $\mu$ of $(X,G)$ such that $\pi_*\mu=\nu$.

\subsection{Micro-supported actions are not locally quasi-analytic}\label{micro-supported} Definitions of a locally quasi-analytic and of a micro-supported action are given in Section \ref{LQA-actions}. 

\begin{prop}
    If an effective action $(X,G)$ is micro-supported, then it is not locally quasi-analytic.
\end{prop}

\begin{proof}
    Suppose $(X,G)$ is micro-supported, and let $W \subset X$ be an open set. Note that since a micro-supported action is effective by definition, then $X$ does not have isolated points, and so $W$ is not a singleton.  Choose a non-empty open subset $V \subset W$ such that the topological closure $\overline V$ is properly contained in $W$. Then $W \backslash \overline{V}$ is open, and by the definition of a micro-supported action there exists $g \ne 1_G$ such that $g|X \backslash (W \backslash \overline{V}) = {\rm id}$. Then $g|V = {\rm id}$ and $g|W \ne {\rm id}$, since $g \ne 1_G$ and so it must act non-trivially on $W \backslash \overline{V}$.  
\end{proof}

\subsection{Uniformly recurrent subgroups and invariant random subgroups}\label{sec-inv-random}
Let $S(G)$ be the set of all subgroups of $G$. The Chabauty-Fell topology \cite{GW15,LM18,MT20,Vorobets2012} on $S(U)$ has a basis of open sets 
  $$U(A,B) = \{H \in S(G): A \subset H \textrm{ and }B\cap H = \emptyset\},$$
where $A,B$ are finite subsets of $G$. With this topology, $S(G)$ is a compact metrizable space. The action of $G$ on $S(G)$ by conjugation is continuous, which implies that $(S(G), G)$ is a dynamical system. A \emph{uniformly recurrent subgroup (URS)}, introduced in \cite{GW15}, is a minimal closed invariant subset of $S(G)$ for the action by conjugation. An \emph{invariant random subgroup (IRS)} is an invariant probability measure on $S(G)$ \cite{AGV2014}.

If $(X,G)$ is a dynamical system, then the map $\Stab: X\to S(G)$ given by $\Stab(x)=G_x$, is upper-semi-continous and equivariant (see, for example, \cite{GW15}). The map $\Stab$ is Borel, and it is continuous precisely at points with trivial holonomy \cite[Lemma 5.4]{Vorobets2012}. Since $\Stab$ is a Borel map,  for every invariant measure $\mu$ of $(X,G)$, the pushforward  $\Stab_*\mu$ is an invariant measure of $(S(G),G)$.  Glasner and Weiss have shown in \cite{GW15} that if the system $(X,G)$ is minimal, then there is a unique minimal set in $(\overline{\Stab(X)},G)$ (see also \cite{MT20}). This minimal set is called the {\it stabilizer URS} of $(X,G)$, and it is precisely the topological closure of the image in $S(G)$ of the set of points with trivial holonomy in $(X,G)$, see \cite{GW15} or \cite[Lemma 2.2, Proposition 2.5]{LM18}. If $(X,G,\mu)$ has no essential holonomy, then the pushforward measure $\Stab_*\mu$ is supported on the stabilizer URS of the action. This gives an alternative definition of an action with no essential holonomy: a minimal action $(X,G,\mu)$ has no essential holonomy if and only if the IRS associated to $(X,G,\mu)$ is supported on its URS. It was proved in \cite[Proposition 13]{AGV2014} that, given an IRS $\nu$ on $S(G)$, there exists a measure-preserving action $(X,G,\mu)$ on a Borel probability space such that $\nu = \Stab_* \mu$. Similarly, it was proved in \cite[Corollary 1.2]{MT20} that every URS in $(S(G),G)$ can be realized as a stabilizer URS of a minimal action of $G$ on a compact space $X$.

\subsection{$G$-odometers}\label{odom-action} 
We describe a method to construct a group action on a Cantor set which we use in a few of our examples. The action constructed below is called a \emph{$G$-odometer}, where $G$ is the acting group (see, for example,  \cite{CortezPetite2008}).

Let $G$ be a finitely generated residually finite group and consider a decreasing infinite sequence of finite index subgroups $\mathcal G: G=G_0 \supset G_1 \supset \cdots$, that is, for any $i \geq 1$ the inclusion $G_i \subset G_{i-1}$ is proper. We do not require the subgroups to be normal, and we do not require that the intersection $K(\mathcal G) = \bigcap G_i$ is trivial. 
Every such group chain $\mathcal G$ gives rise to an action of $G$ on a Cantor set as follows: for $i \geq 0$ the coset space $G/G_i$ is a finite set, and there are maps $f^{i+1}_{i}: G/G_{i+1} \to G/G_i$ given by coset inclusions $gG_{i+1} \subset gG_{i}$. Note that $|G/G_{i+1}| = |G/G_i||G_i/G_{i+1}|$ for all $i \geq 0$. The group $G$ acts on the coset spaces $G/G_i$ by left translations, permuting the cosets, and preserving the relation of coset inclusions. Since the group chain is decreasing, the inverse limit space
  $$X = \lim_{\longleftarrow}\{f^{i+1}_i: G/G_{i+1} \to G/G_i\}: = \{(x_0, x_1, \ldots ): f^{i+1}_i(x_{i+1}) =x_i, i\geq 0\} \subset \prod_{i \geq 0} G/G_i$$
is a Cantor set with respect to the relative topology induced by the product topology on the infinite product $\prod_{i \geq 0} G/G_i$.  Basic sets in $X$ in this topology are the sets
  $$U_{g,i} = \{(x_0, x_1,\ldots) \in X : x_i = gG_i\},$$
  for $g \in G$ and $i \geq 0$. These sets are
also called cylinder sets. The group $G$ acts on each coset $G/G_i$ by left multiplication, and this action of $G$ preserves inclusions of cosets. Thus there is the induced action on the inverse limit space
  \begin{align}\label{eq-Godomaction}
      G \times X \to X: (g, (x_0,x_1,\ldots)) \mapsto (gx_0,gx_1,\ldots).
  \end{align}
Since the action of $G$ on each coset space $G/G_i$ is transitive, $(X,G)$ is minimal. The standard metric on $X$ is given by, for any $\overline{x} = (x_0,x_1,\ldots)$ and any $\overline{y} = (y_0,y_1,\ldots)$ in $X$,
  $$d(\overline{x}, \overline{y}) = \frac{1}{2^n}, \quad n = \sup \{i \geq 0 : x_i = y_i\}.$$
With respect to this metric, the action in \eqref{eq-Godomaction} is isometric. The action $(X,G)$ is uniquely ergodic with the counting measure $\mu$ on $X$ defined by assigning, for each cylinder set $U_{g,i}$, $i \geq 0$, $g \in G$,
  $$\mu(U_{g,i}) = \frac{1}{|G:G_i|}.$$
We note that every $G$-odometer action arising from a group chain can be represented as an action on the boundary of a rooted spherically homogeneous tree \cite[p.~2012]{GL2021} and vice versa, providing a large source of examples of $G$-odometer actions. Also, every minimal equicontinuous action $(X,G)$ on a Cantor set is conjugate to a $G$-odometer action, although the choice of a group chain $\mathcal G$ in this case is not unique, see \cite{DHL2016} for more details.

\section{Holonomy and almost normal stabilizers}\label{sec-holonomy}\label{holonomy-stabilizers}

In this section we prove Theorem \ref{theo-1} and Theorem \ref{coro-IRS}. For a subgroup $H$ of $G$, denote by $\Conj(H)$ the set of conjugates of $H$ in $G$.

Let $(X,G)$ be a dynamical system. For every subgroup $H$ of $G$ we define
$$
X_H=\{x\in X: G_x\in \Conj(H)\},
$$
and
$$
X_H(g)=\{x\in X: G_x=gHg^{-1}\}, \mbox{ for every } g\in G.
$$
The set $X_H$ is invariant under the action of $G$ and is Borel. Indeed, $X_H$ is the pre-image, by the function $\Stab:X\to S(G)$, of an orbit of $G$ in the space of subgroups $S(G)$ under the action of $G$ by conjugation, see Section \ref{sec-inv-random}.

\subsection{Some properties of almost normal stabilizers} In this section we collect a few technical lemmas used in the proofs of our results. While the proofs are rather straightforward, we include them here for completeness.

\begin{lemma}\label{lemma_contention}
Let $(X,G)$ be a dynamical system. Let $H \subset G$ be an almost normal subgroup of $G$, and suppose the set $X_H$ defined above is non-empty. Then  for every $x\in \overline{X_H}$ there exists $g\in G$ such that $x \in \overline{X_H(g)}$, and then $gHg^{-1}\subseteq G_x$. 
\end{lemma}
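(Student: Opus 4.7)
The plan is to exploit almost normality to reduce the closure $\overline{X_H}$ to a finite union of closures of the sets $X_H(g)$, on each of which the stabilizer is constant, and then pass to the limit using continuity of the action.

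First I would invoke condition (3) of Definition \ref{almost-normal}: since $H$ is almost normal, the set $\Conj(H)$ of conjugates of $H$ in $G$ is finite, say $\Conj(H) = \{g_1 H g_1^{-1}, \ldots, g_n H g_n^{-1}\}$ for some $g_1, \ldots, g_n \in G$. By the very definition of $X_H$ and $X_H(g)$, this yields the finite decomposition
\[
X_H \;=\; \bigcup_{i=1}^{n} X_H(g_i).
\]
Taking topological closures and using the fact that a finite union commutes with closure, I obtain $\overline{X_H} = \bigcup_{i=1}^{n} \overline{X_H(g_i)}$.

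Next I would fix $x \in \overline{X_H}$ and pick an index $i$ with $x \in \overline{X_H(g_i)}$. Choose a net $(x_\alpha) \subset X_H(g_i)$ converging to $x$ in the compact Hausdorff space $X$. By definition of $X_H(g_i)$, every $h \in g_i H g_i^{-1}$ fixes each $x_\alpha$; since the action of $h$ is a homeomorphism of $X$, it is continuous, and therefore
\[
h\,x \;=\; h\,\lim_{\alpha} x_\alpha \;=\; \lim_{\alpha} h\,x_\alpha \;=\; \lim_{\alpha} x_\alpha \;=\; x.
\]
Hence $h \in G_x$ for every $h \in g_i H g_i^{-1}$, which is exactly $g_i H g_i^{-1} \subseteq G_x$, proving the lemma with $g = g_i$.

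There is no real obstacle here; the only point that requires a small justification is the equality $\overline{X_H} = \bigcup_i \overline{X_H(g_i)}$, which is a routine topological fact (finiteness of the index set is essential, and this is where almost normality is used). Everything else is just the continuity of each homeomorphism $\phi_h$ applied to a convergent net.
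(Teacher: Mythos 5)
Your proposal is correct and is essentially the paper's own argument: finiteness of $\Conj(H)$ reduces $\overline{X_H}$ to a finite union of the closures $\overline{X_H(g_i)}$, and continuity of each homeomorphism then transfers the fixed-point property to the limit. The paper states this in two sentences; you have merely filled in the routine details.
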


\begin{proof}
    Since $\Conj(H)$ is finite there exists $g\in G$ such that $x\in \overline{X_H(g)}$. Since $G$ acts on $X$ by homeomorphisms, the continuity of the action implies that $gHg^{-1}\subseteq G_x$.
\end{proof}

For not almost normal groups, it is easy to find examples of subgroups which contain their conjugate as a proper subgroup. For instance, for the Baumslag-Solitar group $BS(1,n) = \{a,b: bab^{-1} = a^n\}$, $n \geq 2$, we have $b\langle a \rangle b^{-1} = \langle a^n \rangle \subsetneq \langle a \rangle$.  We show that this situation does not happen for almost normal groups, which has implications on the topological properties of certain dynamically defined sets in $X$ in Theorem \ref{converse}.

\begin{lemma}\label{auxiliar}
Let $H$ be an almost normal subgroup of $G$. If $g,l\in G$ are such that $gHg^{-1}\subseteq lHl^{-1}$, then $gHg^{-1}=lHl^{-1}$.    
\end{lemma}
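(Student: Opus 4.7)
The plan is to reduce to the normalizer-free case by setting $k := l^{-1}g$: then the hypothesis $gHg^{-1}\subseteq lHl^{-1}$ is equivalent to $kHk^{-1}\subseteq H$, and once we establish that $kHk^{-1}=H$ (i.e.\ that $k\in N_G(H)$), conjugating back by $l$ yields $gHg^{-1}=lHl^{-1}$. So the whole task reduces to showing that, for an almost normal $H$, every element $k\in G$ with $kHk^{-1}\subseteq H$ must actually normalize $H$.

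To do this, I would iterate the inclusion under conjugation by $k$. Since conjugation by $k$ is an automorphism of $G$, it preserves set inclusions, so applying it to $kHk^{-1}\subseteq H$ gives $k^{2}Hk^{-2}\subseteq kHk^{-1}$, and by induction
\begin{equation*}
    H \supseteq kHk^{-1} \supseteq k^{2}Hk^{-2} \supseteq \cdots \supseteq k^{n}Hk^{-n} \supseteq \cdots.
\end{equation*}
Every term of this descending chain is a conjugate of $H$, hence belongs to $\Conj(H)$.

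At this point I invoke almost normality of $H$ via Definition~\ref{almost-normal}(3): the set $\Conj(H)$ is finite, so the chain above can contain only finitely many distinct subgroups and therefore must stabilize. Choosing $n\geq 0$ with $k^{n}Hk^{-n}=k^{n+1}Hk^{-n-1}$ and conjugating this equality by $k^{-n}$ gives $H=kHk^{-1}$, which is what we needed. The only subtlety worth flagging is that the ``descending chain stabilizes'' step uses the \emph{finiteness of the orbit of $H$ under conjugation}, not any noetherian or finiteness property of $H$ itself: $H$ may well be infinite, and conjugation by $k$ may restrict to a proper injective endomorphism of $H$ when viewed intrinsically — almost normality is precisely the external condition that prevents this from happening inside $G$.
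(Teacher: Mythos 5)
Your proof is correct and follows essentially the same route as the paper: reduce to the case $kHk^{-1}\subseteq H$ with $k=l^{-1}g$, iterate conjugation to get a descending chain of conjugates of $H$, and use that almost normality makes $\Conj(H)$ finite. The paper phrases this as a contradiction (a strict inclusion would yield infinitely many distinct conjugates) while you argue directly that the chain stabilizes and conjugate back, but the underlying idea is identical.
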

\begin{proof}
If there exists $s\in G$ such that $sHs^{-1} \subsetneq  H$, a proper subgroup of $H$, then for $k\geq 1$, $$s^kHs^{-k} \subsetneq  s^{k-1}Hs^{-(k-1)},$$ 
and we obtain a descending infinite sequence of proper subgroups conjugate to $H$. But this is not possible since $H$ is almost normal, and so has finitely many distinct conjugate subgroups.  
\end{proof}

 \subsection{Dynamically defined partitions} In this section we prove that an action with almost normal stabilizers defines a covering of $\overline X_H$, which becomes a partition if $(X,G)$ is minimal. The existence of such partitions provides a useful tool for the proofs of our theorems. 

\begin{theo}\label{converse}
Let $(X,G)$ be a dynamical system. Suppose there exists an almost normal subgroup $H$ of $G$ such that $X_H\neq \emptyset$.  
If $g_1,\cdots, g_m$ are representing elements of each class in $G/N_G(H)$,
 then the collection $$\C=\{\overline{X_H(g_i)}: 1\leq i\leq m\}$$ is a closed covering of $\overline{X_H}$ that verifies the following:
\begin{enumerate}
\item For every $p_1,p_2\in G$  with $p_1\in p_2N_G(H)$, and for every $h\in H$, we have
$$
p_2\overline{X_H(h)}=\overline{X_{H}(p_2)}=\overline{X_H(p_1)}.
$$

\item If $$E=\bigcup_{1\leq i\neq j\leq m}\overline{X_{H}(g_i)}\cap \overline{X_{H}(g_j)}$$
is non-empty, then $E$ is a closed invariant set contained in $X_H^c$.

\item If $X_H$ is dense in $X$, then
$$
X_H = \{ x\in X: x \textnormal{ has trivial holonomy}\}.
$$ 
\item If the action $(X,G)$ is minimal, then $E$ is the empty set, and ${\rm Conj}(H)$ is the stabilizer URS of $(X,G)$.

 \end{enumerate}
 \end{theo}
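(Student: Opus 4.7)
The plan rests on two structural facts that follow immediately from almost normality: first, $X_H$ is the finite disjoint union $\bigsqcup_{i=1}^m X_H(g_i)$, which closes up to give the asserted covering $\overline{X_H} = \bigcup_i \overline{X_H(g_i)}$; second, the equivariance identity $G_{gx} = gG_xg^{-1}$ shows $pX_H(q) = X_H(pq)$, and since $p$ acts as a homeomorphism this passes to closures. Combining this last identity with the equality $X_H(p_1) = X_H(p_2)$ whenever $p_1 p_2^{-1} \in N_G(H)$ (because then $p_1 H p_1^{-1} = p_2 H p_2^{-1}$) and with the observation that $hHh^{-1} = H$ for $h \in H$ handles item (1) at once.

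For item (2), closedness of $E$ is automatic. To check $E \subseteq X_H^c$, take $x \in \overline{X_H(g_i)} \cap \overline{X_H(g_j)}$ with $i \ne j$; by Lemma \ref{lemma_contention} applied in each closure, both $g_iHg_i^{-1}$ and $g_jHg_j^{-1}$ sit inside $G_x$. Were $x \in X_H$, say $G_x = g_kHg_k^{-1}$, Lemma \ref{auxiliar} would turn both inclusions into equalities, forcing $g_i, g_j$ into a common coset of $N_G(H)$, which contradicts their choice as distinct representatives. Invariance of $E$ follows because left multiplication by any $g \in G$ permutes the cosets $g_iN_G(H)$, hence via item (1) permutes the closed sets $\overline{X_H(g_i)}$ and their pairwise intersections.

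For item (3), given $x \in X_H(g_i)$, item (2) guarantees $x \notin E$, so there is a neighborhood $V$ of $x$ disjoint from $\overline{X_H(g_j)}$ for all $j \ne i$; then $V \cap X_H \subseteq X_H(g_i)$, which is dense in $V$ by the density hypothesis. Every $g \in G_x = g_iHg_i^{-1}$ fixes $X_H(g_i)$ pointwise, so continuity forces $g|V = \mathrm{id}$, giving trivial holonomy. For the converse, I would invoke the standard fact (\cite[Lemma 5.4]{Vorobets2012}) that $\Stab$ is continuous exactly at trivial-holonomy points; density of $X_H$ together with finiteness of $\Conj(H)$ then forces $G_{y_n}$ to be eventually constant and equal to $G_x$ for any sequence $y_n \in X_H$ converging to a trivial-holonomy point $x$, whence $x \in X_H$. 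Item (4) is a direct consequence: minimality makes the nonempty invariant set $X_H$ dense, so item (3) applies; $E$, being closed invariant and disjoint from the nonempty $X_H$, must be empty; and the stabilizer URS, defined as $\overline{\Stab(X_H)}$, equals $\Conj(H)$, which is already a single finite $G$-orbit under conjugation and hence minimal.

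The main obstacle I anticipate is the converse direction of item (3): the forward direction works entirely within the combinatorics of the finite covering, but going from trivial holonomy back to membership in $X_H$ requires the lower-semicontinuity side of the $\Stab$ map, which is precisely where the trivial-holonomy hypothesis couples to the Chabauty--Fell topology on $S(G)$ and where the finiteness of $\Conj(H)$ becomes essential.
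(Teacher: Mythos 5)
Your proof is correct, and for items (1), the forward half of (3), and (4) it runs essentially along the paper's lines. The genuine divergence is item (2). The paper routes this through a stronger intermediate claim: for any point $x$ whose \emph{orbit closure} meets $X_H$, the stabilizer $G_x$ cannot contain two distinct conjugates of $H$; proving that claim is the most technical part of the paper's argument (it passes to the normal core $\Gamma=\bigcap_g gN_G(H)g^{-1}$ and uses an approximation argument to transport the inclusions $gHg^{-1}\subseteq G_x$ to a point of $\overline{O(x)}\cap X_H$). You argue only for points that are themselves in $X_H$: there $G_x=g_kHg_k^{-1}$ for some $k$, so Lemma \ref{lemma_contention} plus Lemma \ref{auxiliar} upgrade both inclusions to equalities and force $i=j$. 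This suffices, since the literal content of (2) is $E\subseteq X_H^c$, and once closedness and invariance of $E$ are established (your coset-permutation argument for invariance is fine), the apparently stronger conclusion $\overline{O(y)}\cap X_H=\emptyset$ for $y\in E$ comes for free from $\overline{O(y)}\subseteq E$; nothing needed for (3) or (4) is lost. Your route is therefore genuinely shorter. For the converse of (3) you outsource the work to the cited fact that $\Stab$ is continuous exactly at trivial-holonomy points, combined with the observation that a net in the finite (hence closed and discrete) subset $\Conj(H)$ of $S(G)$ can only converge to an element of $\Conj(H)$; the paper instead gives a self-contained neighborhood argument (each $g\in G_x$ fixes a small neighborhood, which meets $X_H(g_i)$ by density, so $g\in g_iHg_i^{-1}$). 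Both are valid; yours is cleaner at the cost of the citation. One cosmetic slip: $p_1\in p_2N_G(H)$ means $p_2^{-1}p_1\in N_G(H)$, not $p_1p_2^{-1}\in N_G(H)$, though this does not affect the conclusion $p_1Hp_1^{-1}=p_2Hp_2^{-1}$ that you actually use.
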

\begin{proof}
By definition,  $g\in lN_G(H)$ if and only if $X_H(g)=X_H(l)$. Thus, if $g\in lN_G(H)$ then $\overline{X_H(g)}=\overline{X_H(l)}$. This implies that $\{\overline{X_{H}(g_i)}: 1\leq i\leq m\}$ is a closed covering of $\overline{X_H}$. Since for every $g\in G$ we have $gX_H(1_G)=X_H(g)$, the continuity of the action implies $g\overline{X_{H}(1_G)} = \overline{X_H(g)}$, and (1) follows.

\medskip

Statement (2) will be a consequence of the following claim:

{\it Claim 1:} If $x\in X$ is such that $\overline{O(x)}\cap X_H\neq \emptyset$, then $gHg^{-1}\cup lHl^{-1}\subseteq G_x$ implies $gHg^{-1}=lHl^{-1}$.   

\begin{proof}[Proof of Claim 1] Let $x\in X$ be such that   $\overline{O(x)}\cap X_H\neq \emptyset$. If $O(x) \cap \overline{X_H} \ne \emptyset$, then we are done, so assume that $\overline{X_H}$ contains an accumulation point of $O(x)$. This does not imply that $O(x)$ intersects $\overline{X_H}$, so we cannot use Lemma \ref{lemma_contention}. Instead, we are going to find a point $y \in \overline{O(x)}\cap X_H$ such that $G_y$ contains $gHg^{-1} \cup lHl^{-1}$, for some $g,l \in G$. Then, since $y \in X_H$, there is $m \in G$ such that $(gHg^{-1}\cup lHl^{-1} )\subset m H m^{-1}$, and Lemma \ref{auxiliar} implies that $lHl^{-1} = mHm^{-1} = gHg^{-1}$.

Since $N_G(H)$ is a finite index subgroup of $G$, its core $\Gamma=\bigcap_{g\in G}gN_G(H)g^{-1}$ is a normal finite index subgroup of $G$.      Since $\Gamma$ is a finite index subgroup of $G$, for  $y\in \overline{O(x)}\cap X_H$ there exists $d\in G$ such that for every open neighborhood $V$ of $y$, there exists $\gamma_V\in\Gamma$ such that $d\gamma_Vx\in V$. By continuity of the action, this implies that for every neighborhood $U$ of $d^{-1}y$, there exists $\gamma_U\in\Gamma$ such that 
\begin{equation}\label{eq1}
\gamma_Ux\in U.
\end{equation}
Let $g, l \in G$ be such that $gHg^{-1} \cup l H l^{-1}\subseteq G_x$. For every $h\in H$ and every open neighborhood $W$ of $ghg^{-1}d^{-1}y$, there exists a neighborhood $U_W$ of $d^{-1}y$ and $\gamma_{U_W} \in \Gamma$ such that
\begin{equation}\label{eq2}
ghg^{-1}\gamma_{U_W}x\in W. 
\end{equation}

Observe that if $d^{-1}y\neq ghg^{-1}d^{-1}y$, then we can take $W\cap U_{W}=\emptyset$. 

Due to $\Gamma \subseteq N_G(gHg^{-1})$, for every $W$ we have
$$
\gamma_{U_W}^{-1}ghg^{-1}\gamma_{U_W}x=x,
$$
since $g H g^{-1} \subset G_x$ by assumption.
From this we get
$$
ghg^{-1}\gamma_{U_W}x=\gamma_{U_W}(\gamma_{U_W}^{-1}ghg^{-1}\gamma_{U_W})x=\gamma_{U_W}x.
$$
Thus, from equations (\ref{eq1}) and (\ref{eq2}), we obtain
$$
ghg^{-1}d^{-1}y=d^{-1}y,
$$
so $gHg^{-1} \subset G_{d^{-1}y}$.
Repeating the above argument for $l H l^{-1}$ and using the fact that $\Gamma$ also normalizes $lHl^{-1}$, we obtain $lHl^{-1} \subseteq G_{d^{-1}y}$. Thus $d^{-1}g Hg^{-1}d \cup d^{-1}l H l^{-1}d \subset G_y$, where $y \in X_H$, and the argument at the beginning of the proof shows Claim 1. \end{proof}

\medskip  

Since by Lemma \ref{lemma_contention} $y\in \overline{X_{H}(g_i)}$ implies $g_iHg_i^{-1}\subseteq G_y$, by Claim 1 we deduce that if $y\in \overline{X_{H}(g_i)}\cap \overline{X_{H}(g_j)}$, for some $1\leq i\neq j \leq m$, then $\overline{O(y)}\cap X_H=\emptyset$, and so $E \subset X_H^c$, the complement of $X_H$ in $X$. The set $E$ is closed since it is a finite intersection of closed sets, and it is invariant since the stabilizers of points in the same orbit are conjugate. This shows statement (2).

\medskip

For (3), let $x\in X_H$, and let $1\leq j\leq  m$ be such that $G_x=g_jHg_j^{-1}$.  Choosing an open neighborhood $V$ of $x$ small enough, we can arrange that for every $y\in V$, the group $g_jHg_j^{-1}$ is contained in $G_y$. Indeed, if such $V$ does not exist, then by Lemma \ref{lemma_contention} we should have $G_x = g_jHg_j^{-1} \supset g_i H g_i^{-1}$ for $g_i \ne g_j$, which is not possible by Lemma \ref{auxiliar}. Thus the action of every $h \in g_jHg_j^{-1}$ fixes $V$, which shows that $x$ has trivial holonomy.  

Now, suppose that $x\in X$ has trivial holonomy. Since $X=\overline{X_H}$, there exists $1\leq i \leq m$ such that $x\in \overline{X_H(g_i)}$. This implies that $g_iHg_i^{-1}\subseteq G_x$, and we must show the reverse inclusion. For that, by an argument similar to the one in the previous paragraph, we can find an open neighborhood $V$ of $x$ such that $X_H \cap V = X_H(g_i) \cap V$, i.e.~for any $y \in V$ we have $G_y = g_iHg_i^{-1}$. Let $g \in G_x$. Since $x$ has trivial holonomy, there is an open neighborhood $W_g \subset V$ such that $g$ fixes every point in $W_g$, and in particular the points in $W_g \cap X_H(g_i)$. This shows that $G_x \subset g_i H g_i^{-1}$, and $x \in X_H$.

\medskip
 To show (4), note that if $(X,G)$ is minimal, then $X = \overline{X_H}$, and by Lemma \ref{lemma_contention} for any $y \in X$ the stabilizer $G_y$ contains at most one distinct conjugate of $H$. It follows that $E$ must be empty.

Next, by (3) $X_H$ coincides with the set of points with trivial holonomy of $(X,G)$, and then (4) follows by the result of Glasner and Weiss \cite{GW15}, see also \cite[Lemma 2.2, Proposition 2.5]{LM18}, since $(X,G)$ is minimal.
\end{proof}  

\begin{remark}\label{transitive-example}
{\rm For the point (3) of Theorem \ref{converse} to hold, it is necessary that the set $X_H$ is dense in $X$, as the following example shows.  
Consider the space $X=\{0,1\}^{\ZZ}$ of bi-infinite sequences of $0$'s and $1$'s equipped with product topology. Then $X$ is a Cantor set, i.e.~a compact metrizable totally disconnected perfect space. Define an action of $\ZZ$ on $X$ by a shift transformation $\sigma:X\to X$, i.e.~for $\underline{x} = (x_i)_{i \in \mathbb Z}$ we have $\sigma(\underline{x})_i = x_{i+1}$. Let $H=n\ZZ$, for some fixed integer $n>1$. In this case, $X_H=\{x\in \{0,1\}^{\ZZ}: \sigma^n(\underline x)=\underline x\}$ is a finite set and therefore is not dense in $\{0,1\}^{\ZZ}$. On the other hand, the action $(X,\mathbb Z)$ has an element with a dense orbit (an infinite sequence obtained by concatenating first all words of length $1$, then all words of length $2$, and so on), the points of which have trivial stabilizers. Thus the elements of $X_H$ are approximated with an orbit with trivial stabilizer and, in particular, points in $X_H$ do not have open neighborhoods fixed by $\sigma^n$ and do not have trivial holonomy. A similar argument applies to any point with non-trivial stabilizer, and so the set of points with trivial holonomy is precisely the set of points with trivial stabilizers. This example also shows that minimality is essential for the uniqueness of the URS in statement (4) of Theorem \ref{converse}. Indeed, for $H=\{1_G\}$ the set $X_H$ is dense, nevertheless, every subgroup of $\ZZ$ defines a minimal closed invariant set of $\overline{\Stab(X)}$. }    
\end{remark}

\begin{coro}\label{converse-minimal}
Let $(X,G)$ be a minimal dynamical system. Suppose there exists an almost normal subgroup $H$ of $G$ such that $X_H\neq \emptyset$. Then, if $g_1,\cdots, g_m$ are representatives of the cosets in $G/N_G(H)$, the collection $$\C=\{\overline{X_H(g_i)}: 1\leq i\leq m\}$$ is a clopen partition of $\overline{X_H}$ that induces a factor map $\phi:X\to G/N_G(H)$ given by 
$$
\phi(x)=gN_G(H) \mbox{ if and only if } x\in \overline{X_H(g)}. 
$$

\end{coro}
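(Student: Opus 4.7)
The plan is to deduce the corollary almost entirely from Theorem \ref{converse}, using minimality to upgrade the closed covering produced there to a clopen partition, and then to package the resulting characteristic function as a factor map.

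First I would observe that $X_H$ is a $G$-invariant non-empty set, so by minimality $\overline{X_H}=X$. Hence the closed covering $\mathcal{C}=\{\overline{X_H(g_i)}:1\leq i\leq m\}$ provided by Theorem \ref{converse} already covers all of $X$. Now, statement (4) of Theorem \ref{converse} tells us that the set $E=\bigcup_{i\neq j}\overline{X_H(g_i)}\cap\overline{X_H(g_j)}$ is empty. Thus the members of $\mathcal{C}$ are pairwise disjoint. Since a finite disjoint closed cover of $X$ forces each member to have closed complement (a finite union of the remaining closed pieces), each $\overline{X_H(g_i)}$ is also open, and $\mathcal{C}$ is a clopen partition. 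Each cell is moreover non-empty: statement (1) of Theorem \ref{converse} gives $p\overline{X_H(1_G)}=\overline{X_H(p)}$ for every $p\in G$, so if $X_H(g)\neq\emptyset$ for some $g\in G$ (which holds because $X_H\neq\emptyset$), then translating by $g_ig^{-1}$ shows $X_H(g_i)\neq\emptyset$ for every $i$.

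Next I would define $\phi:X\to G/N_G(H)$ by declaring $\phi(x)=g_iN_G(H)$ exactly when $x\in\overline{X_H(g_i)}$. This is well-defined and surjective by the previous paragraph, and statement (1) of Theorem \ref{converse} guarantees consistency: if we replaced $g_i$ by another representative $g_i'\in g_iN_G(H)$, we would get $\overline{X_H(g_i')}=\overline{X_H(g_i)}$, so the definition depends only on the coset. Continuity is immediate since $G/N_G(H)$ is finite (hence carries the discrete topology) and the preimage of each singleton is the clopen set $\overline{X_H(g_i)}$.

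Finally I would verify equivariance. Given $p\in G$ and $x\in\overline{X_H(g_i)}$, statement (1) of Theorem \ref{converse} yields $px\in p\overline{X_H(g_i)}=\overline{X_H(pg_i)}$. Writing $pg_i=g_jn$ for the unique $g_j$ with $pg_iN_G(H)=g_jN_G(H)$ and some $n\in N_G(H)$, the same statement gives $\overline{X_H(pg_i)}=\overline{X_H(g_j)}$, so $\phi(px)=g_jN_G(H)=pg_iN_G(H)=p\phi(x)$, where on $G/N_G(H)$ we use the left multiplication action. This completes the construction of the factor map. The only step requiring real input is the disjointness of the $\overline{X_H(g_i)}$, which is precisely where minimality enters via Theorem \ref{converse}(4); the rest is bookkeeping with cosets.
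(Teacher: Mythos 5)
Your proof is correct and follows essentially the same route as the paper: use minimality to get $\overline{X_H}=X$, invoke Theorem \ref{converse} to see that the closed pieces $\overline{X_H(g_i)}$ are pairwise disjoint (hence clopen), and read off well-definedness, continuity, surjectivity and equivariance of $\phi$ from statement (1) of that theorem. The only cosmetic difference is that you cite Theorem \ref{converse}(4) for $E=\emptyset$ while the paper derives it from statement (2) together with minimality, which is the same argument.
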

\begin{proof}
   \medskip
Since $X_H$ is invariant, the minimality of $(X,G)$ implies that $X=\overline{X_H}$. Theorem \ref{converse}, (2), implies that $E=\emptyset$, and the collection $\C$ is a clopen partition of $X$. Theorem \ref{converse}, (1), implies that the map $\phi:X\to G/N_G(H)$ is well defined, continuous, surjective and commutes with the respective actions. 
\end{proof}

\subsection{Finite  stabilizers and properties of actions}

 In this section, we study the relationship between holonomy, finite stabilizers and the existence of actions with certain properties, proving Theorems \ref{theo-1} and \ref{theo-LQA1}. The proof is divided into Proposition \ref{equivalence-URS}, for statements (1) - (3) of Theorem \ref{theo-1}, and Theorem \ref{theo-LQA}, from which we then derive Theorem \ref{theo-1},(4). Recall that $\Conj (H)$ denotes the set of conjugates of a subgroup $H$ of $G$.
 
Proposition \ref{equivalence-URS} is in line with similar results on the characterization of topologically free minimal systems in other contexts (see, for example,  \cite[Lemma 2.1]{Joseph2024}). 

\begin{prop}\label{equivalence-URS} 
Let $(X,G)$ be a minimal system. The following statements are equivalent:
\begin{enumerate}
\item There exists $x\in X$ with finite stabilizer.

\item There exists an almost normal finite subgroup $H$ of $G$, such that the stabilizer URS of $(X,G)$  is the orbit ${\rm Conj}(H)$ of $H$ in $S(G)$.

\item There exists an almost normal finite subgroup $H$ of $G$ such that $X_H=\{x\in X: x \mbox{ has trivial holonomy}\}$. 

\item There exists an almost normal finite subgroup $H$ of $G$ such that $X_H\neq \emptyset$.
\end{enumerate}
\end{prop}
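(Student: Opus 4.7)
The plan is to prove the cycle of implications $(1)\Rightarrow(2)\Rightarrow(3)\Rightarrow(4)\Rightarrow(1)$. Three of these are short. For $(4)\Rightarrow(1)$, any $x\in X_H$ has $G_x$ conjugate to the finite group $H$, so its stabilizer is finite. For $(3)\Rightarrow(4)$, the set of points with trivial holonomy is residual and hence non-empty, so $X_H$ is non-empty. For $(2)\Rightarrow(3)$, I would invoke the Glasner--Weiss description (recalled in Section~\ref{sec-inv-random}) of the stabilizer URS as the closure in $S(G)$ of $\{G_y : y \text{ has trivial holonomy}\}$; since $\Conj(H)$ is finite and hence closed in the Chabauty--Fell topology, every trivial-holonomy point $y$ must satisfy $G_y \in \Conj(H)$, so $y \in X_H$. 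Thus $X_H$ contains the residual set of trivial-holonomy points, is in particular dense, and Theorem~\ref{converse}(3) then identifies $X_H$ with the trivial-holonomy set.

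The substantive step is $(1)\Rightarrow(2)$. Starting from $x_0$ with finite stabilizer $G_{x_0}$, I would use the density of the trivial-holonomy set to pick a sequence $y_n \to x_0$ of points with trivial holonomy. The standard upper semicontinuity of $\Stab$ (an element $g$ that fixes $y_n$ for infinitely many $n$ must lie in $G_{x_0}$) forces every Chabauty--Fell cluster point of $(G_{y_n})$ to be contained in the finite group $G_{x_0}$. Passing to a subsequence I obtain $G_{y_n} \to L$ in $S(G)$ with $L$ a finite subgroup of $G_{x_0}$. Because each $G_{y_n}$ is already a stabilizer of a trivial-holonomy point, $L$ lies in the stabilizer URS; by minimality of the URS, $\overline{\Conj(L)}$ coincides with the URS.

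The heart of the argument is to show that this closure is actually $\Conj(L)$ and is finite. First, every element of the URS has the same cardinality as $L$: if some $M \in \text{URS}$ had $|M| < |L|$, then $\overline{\Conj(M)}$ would be a non-empty closed $G$-invariant subset of the URS, hence equal to the URS by minimality, which would force $L \in \overline{\Conj(M)}$ and thereby $|L| \le |M|$, a contradiction. Second, an unpacking of the Chabauty--Fell topology shows that if a sequence of subgroups of fixed finite cardinality $k$ converges to a subgroup also of cardinality $k$, then the sequence is eventually constant; consequently, within $\overline{\Conj(L)}$ the only elements of cardinality $|L|$ are the conjugates of $L$ themselves. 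Combining these two facts yields $\text{URS} = \Conj(L)$; as a closed discrete subset of the compact space $S(G)$, this set is finite, so $L$ has only finitely many conjugates and is therefore almost normal by Definition~\ref{almost-normal}(3). Setting $H := L$ completes $(1)\Rightarrow(2)$.

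The main obstacle is this two-step finiteness proof for the URS: on their own, neither the uniform cardinality across the URS nor the discreteness of size-preserving Chabauty--Fell limits would suffice, and it is only their combination, together with minimality, that collapses $\overline{\Conj(L)}$ to a single finite orbit. Everything else in the proof is either routine or can be read off from Theorem~\ref{converse} and the Glasner--Weiss description of the stabilizer URS.
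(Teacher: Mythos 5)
Your proposal is correct, and its overall architecture --- locate a finite subgroup in the stabilizer URS, use minimality of the URS to force all of its elements to have the same finite cardinality, and then read off from the Chabauty--Fell topology that such a URS must be the single finite orbit $\Conj(L)$, whence $L$ is almost normal --- is the same as the paper's. The difference is in how the key step is executed: the paper takes $H$ to be a finite subgroup of minimal cardinality in $\overline{\Stab(X)}$ and invokes the Glasner--Weiss machinery of admissible properties (\cite[Corollary 2.5]{GW15} applied to the property ``cardinality at most $|H|$''), whereas you obtain a finite element $L$ of the URS directly as a Chabauty--Fell limit of stabilizers of trivial-holonomy points near $x_0$ (via upper semicontinuity of $\Stab$) and then prove the uniform-cardinality and eventual-constancy facts by hand from the basic open sets $U(A,B)$. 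Your route is more self-contained and makes explicit the step ``$|L|=|H|$ and $L\in\overline{\Conj(H)}$ forces $L$ to be a conjugate of $H$'', which the paper asserts without comment. Two small points to tighten: (i) in the step establishing that all elements of the URS have cardinality $|L|$, you only rule out $|M|<|L|$; you also need the (easy) observation that a Chabauty--Fell limit of subgroups of cardinality $|L|$ has cardinality at most $|L|$, so that no element of $\overline{\Conj(L)}$ can be larger or infinite --- this is the same ``unpacking'' as in your second step, but it should be stated, since without it the claim of equal cardinality is only half proved; (ii) since $X$ is only assumed compact Hausdorff, the approximating family $y_n\to x_0$ should be a net rather than a sequence (the target $S(G)$ is metrizable, so convergent subnets of stabilizers still exist), though this changes nothing of substance. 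The remaining implications $(2)\Rightarrow(3)\Rightarrow(4)\Rightarrow(1)$ are handled essentially as in the paper.
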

\begin{proof}

  Suppose there exists a finite subgroup $K\in \Stab(X)$ (see Section~\ref{sec-inv-random} for the definitions of the space of subgroups $S(G)$ and the map ${\rm Stab}:X\to S(G)$). Then there exists a finite group $H\in \overline{\Stab(X)}$ with minimal cardinality (that is, $|H|\leq |H'|$ for every $H'\in \overline{\Stab(X)}$).   Consider the set $A=\overline{\{gHg^{-1}: g\in G\}}\subseteq \overline{\Stab(X)}$.  The definition of the Chabauty--Fell topology for countable groups implies that if $L\in A$, then for every finite set $F\subseteq L$ there exists $g\in G$ such that $gHg^{-1}\cap F=L\cap F=F$, which implies that $|L|\leq |H|$. Because the cardinality of $H$ is minimal, it follows that if $L\in A$ then $|L|=|H|$, and hence $L=gHg^{-1}$ for some $g\in G$. Thus $A$ is the collection of conjugates of $H$, that is, a single orbit in $S(G)$. Since $H$ is finite and $A$ is compact, $A$ must be finite. Indeed, if $A$ were infinite, there would exist $L\in A$ which is an accumulation point of a sequence $(g_iHg_i^{-1})_{i\in\mathbb{N}}$ with $g_iHg_i^{-1}\neq g_jHg_j^{-1}$ for every $i\neq j$. Since $|L|=|H|$, there exists $i_0$ such that $g_iHg_i^{-1}=L$ for every $i\geq i_0$, which is a contradiction. Since $A$, the orbit of $H$ in $S(G)$, is finite, it follows that $H$ is almost normal, and its orbit is a minimal subset of $\overline{\Stab(X)}$. By uniqueness of the stabilizer URS for minimal systems, it follows that $A=\Conj(H)$ is the stabilizer URS of $(X,G)$. Thus (1) holds if and only if (2) holds. Statement (3) is equivalent to (2) by the definition of a URS, and (4) follows from (3) since the set of points without holonomy is residual (and therefore non-empty) in $X$.
\end{proof}

\begin{theo}\label{theo-LQA}
Let $(X,G)$ be a minimal dynamical system. Then the following are equivalent:
    \begin{enumerate}
    \item There exists an almost normal subgroup $H$ of $G$ such that $$X_H=\{x\in X: x \mbox{ has trivial holonomy}\}.$$  
        \item The stabilizer URS of $(X,G)$ is a finite set of conjugate subgroups.
        \item If in addition $X$ has metric $d$, the action $(X,G)$ is locally quasi-analytic.
    \end{enumerate}
\end{theo}

In the case when $X$ is a metric space, Theorem \ref{theo-LQA} directly implies Theorem \ref{theo-LQA1}, which states that stabilizers of points with trivial holonomy of locally quasi-analytic actions are always almost normal subgroups of the acting group $G$. Note that Theorem \ref{theo-LQA} does not require the almost normal subgroup $H$ to be finite.

\begin{proof}
   We show that (1) holds if and only if (3) holds. By Theorem \ref{converse}, the space $X$ is partitioned into a finite number of clopen sets $X_i:=\overline{X_H(g_i)}$, where $g_1 = 1_G,g_2,\ldots, g_n$ are representatives of the cosets of $N_G(H)$. 
   Fix $\epsilon < {\rm min}\{{\rm dist}(X_i,X_j):  1 \leq i \ne j \leq n\}$. Then an open set $U$ of diameter less than $\epsilon$ is contained in $X_i$ for some $1 \leq i \leq n$. Let $g \in G$ and $V \subset U$ be such that $g|V = {\rm id}$, then for any $x \in V$ we have $g \in G_x$. Since the points in $X_H(g_i)$ have equal stabilizers, we get $g \in G_x$ for $x \in U \cap X_H(g_i)$. Since the latter is dense in $U$, we obtain $g \in G_x$ for all $x \in U$, and so $g|U = {\rm id}$. This shows that the action $(X,G)$ is locally quasi-analytic. 

    For the converse, suppose that $(X,G)$ is locally quasi-analytic, and let $\epsilon >0$ be such that for any open $U \subset X$ with $\diam(U) <\epsilon$, and any $g \in G$, if $g|V = {\rm id}$ for some open $V \subset U$, then $g|U = {\rm id}$. Fix such $U$. Recall that if $x \in U$ is a point without holonomy, and $g \in G_x$, then $g$ fixes an open neighborhood of $x$ in $U$, and therefore must fix every point in $U$. It follows that all points without holonomy in $U$ have equal stabilizers. Fix one such point without holonomy $x \in U$ and set $H: = G_x$. Next, consider a covering $\mathcal U = \{gU: g \in G\}$ of $X$ by open sets, and choose a finite subcover $\mathcal U' = \{g_iU: 1 \leq i \leq n\}$. Since the orbit of $x$ is dense in $X$, all points without holonomy in $g_i U$ have stabilizers equal to $g_i H g_i^{-1}$, and if $g_iU \cap g_jU \ne \emptyset$, then $g_i H g_i^{-1} = g_jH g_j^{-1}$ by Lemma \ref{auxiliar}. Since $\mathcal U'$ is finite, $H$ is almost normal. 
    
    The equivalence between (2) and (1) follows from the definition of the stabilizer URS.\end{proof}

\begin{example}
{\rm  In Proposition \ref{equivalence-URS}, statement (1), the finite stabilizer of $x \in X$ need not be an almost normal subgroup. Consider the infinite dihedral group, i.e.
$$G = \langle a,b \mid bab=a^{-1},b^2=1_G \rangle,$$
and let $G_i = \langle a^{2^i},b \rangle$ for $i\geq 0$. Then $\mathcal G: G = G_0 \supset G_1 \supset \cdots$ is an infinite decreasing sequence of finite index subgroups of $G$. Consider the $G$-odometer $(X,G)$ defined by the group chain $\mathcal G$ (see Section \ref{odom-action}) and let $v_i = 1_GG_i$. Then $(v_0,v_1,\ldots)$ is a point in $X$ with stabilizer $K = \bigcap_{i \geq 0} G_i =\langle b \rangle$. The subgroup $K$ has an infinite number of conjugates, see \cite[Example 2.8]{GL2021},  and so by Proposition \ref{equivalence-URS} it cannot be an URS. It turns out that $(X,G)$ is topologically free, i.e.~$H = \{1_G\}$ is an URS for this action.} 
\end{example}

\begin{proof}[Proof of Theorem \ref{theo-1}] The equivalence of statements (1)-(3) is given by Proposition \ref{equivalence-URS}, the equivalence of statements (1)-(3) to (4) is given by Theorem \ref{theo-LQA} plus the assumption that there is a point with finite stabilizer.
\end{proof}

\subsection{Holonomy, invariant measures,  and almost normal stabilizers.}

In this section we prove Theorem \ref{coro-IRS}. 

The proof of the next lemma is straightforward and we include it here for completeness.

\begin{lemma}\label{finite-conjugacy-classes}
Let $(X,G,\mu)$ be a  dynamical system  equipped with an invariant measure $\mu$. If  $H$ is a subgroup of $G$ such that $\mu(X_H)>0$,   then  $H$ is almost normal.
\end{lemma}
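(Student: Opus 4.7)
My plan is to exploit the partition of $X_H$ into the sets $X_H(g)$ and use $G$-invariance of $\mu$ together with finiteness of $\mu$ to rule out infinitely many conjugates of $H$.

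First I would verify the combinatorial structure of the decomposition. As noted above, $X_H(g) = X_H(g')$ if and only if $g' g^{-1} \in N_G(H)$, so the nonempty fibers of the map $g \mapsto X_H(g)$ are in bijection with the cosets $G/N_G(H)$, and distinct cosets give disjoint sets. Thus
\[
X_H \;=\; \bigsqcup_{gN_G(H) \in G/N_G(H)} X_H(g).
\]
Each $X_H(g)$ is Borel, since $X_H(g) = \mathrm{Stab}^{-1}(\{gHg^{-1}\})$ and singletons are closed (hence Borel) in the metrizable space $S(G)$, while $\mathrm{Stab}\colon X \to S(G)$ is Borel measurable (see Section~\ref{sec-inv-random}).

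Next I would use equivariance of stabilizers: if $x \in X_H(1_G)$, then $G_{gx} = gG_xg^{-1} = gHg^{-1}$, so $gX_H(1_G) = X_H(g)$. Invariance of $\mu$ therefore yields $\mu(X_H(g)) = \mu(X_H(1_G))$ for every $g \in G$. Since $\mu(X_H) > 0$ by hypothesis and the union above is countable (as $G$ is countable), at least one summand is positive, and by the previous identity in fact $\mu(X_H(1_G)) > 0$.

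Finally, suppose towards a contradiction that $H$ is not almost normal, i.e.\ $[G:N_G(H)] = \infty$. Then the decomposition above is an infinite disjoint union of Borel sets each of mass $\mu(X_H(1_G)) > 0$, which forces $\mu(X_H) = \infty$. This contradicts $\mu(X) \le 1$, so $[G:N_G(H)] < \infty$ and $H$ is almost normal.

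There is no real obstacle here; the only subtlety is the measurability of the individual fibers $X_H(g)$, which is handled by the observation on the Borel character of $\mathrm{Stab}$ recorded in Section~\ref{sec-inv-random}. Everything else is a direct consequence of $G$-equivariance and finiteness of $\mu$.
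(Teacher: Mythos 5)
Your proof is correct and is essentially the paper's own argument transported from $S(G)$ down to $X$: the paper pushes $\mu$ forward to $\nu=\Stab_*\mu$ and counts the equal $\nu$-masses of the singletons $\{gHg^{-1}\}$, $g$ ranging over coset representatives, which is exactly your count of the equal $\mu$-masses of the sets $X_H(g)=\Stab^{-1}(\{gHg^{-1}\})$. One cosmetic slip: $X_H(g)=X_H(g')$ if and only if $g^{-1}g'\in N_G(H)$ (same left coset), not $g'g^{-1}\in N_G(H)$; this does not affect the argument, since the decomposition is still indexed by a coset space of $N_G(H)$ and the cardinality conclusion is identical.
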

\begin{proof}

 Let  $\Stab: X \to S(G)$ be the map introduced in Section \ref{sec-inv-random}, and let $\nu=\Stab_*\mu$. If  $D\subseteq G$ is a set containing exactly one representing element of each class in $\{gN_G(H): g\in G\}$, then 
$\{G_x: x\in X_H\}=\bigcup_{g\in D}gHg^{-1}$. Since this union is disjoint, 
we have
$$
0<\mu(X_H)\leq \nu(\{G_x: x\in X_H\})=\nu(\{gHg^{-1}: g\in D\})=|D|\nu(\{H\}),
$$
which implies that $|D|=[G:N_G(H)]<\infty.$
\end{proof}

The next proposition shows that a.e.~stabilizer of the dynamical system $(X,G)$ is finite if and only if they are conjugate subgroups a.e.

\begin{prop}\label{old-corollary-00}
Let $(X,G,\mu)$ be a minimal dynamical system equipped with an ergodic measure $\mu$. The following statements are equivalent:
\begin{enumerate}
\item   $|G_x|<\infty$, for $\mu$-a.e.~$x\in X$. 

\item There exists  an almost normal finite subgroup $H$ of $G$ such that $\mu(X_H)=1$.

\item There exists a finite subgroup $H$ of $G$ with $\Stab_*\mu(\{H\})>0$, i.e.~${\rm Stab}_*\mu$ is atomic.
\end{enumerate}
\end{prop}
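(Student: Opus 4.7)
The plan is to prove $(2)\Rightarrow (1)\Rightarrow (3)\Rightarrow (2)$; each step is short. The implication $(2)\Rightarrow (1)$ is immediate from the definition of $X_H$: if $\mu(X_H)=1$, then $G_x$ is conjugate to the finite group $H$ for $\mu$-a.e.~$x$, hence $|G_x|=|H|<\infty$.

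For $(1)\Rightarrow (3)$, I would push $\mu$ forward by the Borel map $\Stab:X\to S(G)$ from Section \ref{sec-inv-random} and exploit countability. Since $G$ is countable, the collection $\F$ of finite subgroups of $G$ is countable (a finite subgroup is, in particular, a finite subset of $G$ containing $1_G$), and $\F$ is Borel in $S(G)$ because each singleton $\{K\}$ with $K\in\F$ is closed in the Chabauty--Fell topology. Assumption $(1)$ says $\Stab_*\mu(\F)=1$, so $\Stab_*\mu$ is a probability measure concentrated on a countable subset of $S(G)$. Any such measure must have an atom, which yields a finite subgroup $H$ of $G$ with $\Stab_*\mu(\{H\})>0$.

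For $(3)\Rightarrow (2)$, I would first observe that $\Stab^{-1}(\{H\})=\{x\in X:G_x=H\}\subseteq X_H$, so $\mu(X_H)\geq\Stab_*\mu(\{H\})>0$. Then Lemma \ref{finite-conjugacy-classes} forces $H$ to be almost normal. Finally, $X_H$ is a $G$-invariant Borel set (it is the preimage under $\Stab$ of the $G$-conjugation orbit of $H$), so ergodicity of $\mu$ upgrades $\mu(X_H)>0$ to $\mu(X_H)=1$, giving $(2)$.

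No serious obstacle is expected; the only mildly delicate step is the atomicity argument in $(1)\Rightarrow(3)$, which rests on the fact that a countable group has only countably many finite subgroups and that a probability measure concentrated on a countable set is necessarily atomic. All the other ingredients (Lemma \ref{finite-conjugacy-classes}, $G$-invariance of $X_H$, and ergodicity) are used in a routine way.
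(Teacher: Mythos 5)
Your proof is correct, and it follows a slightly different route than the paper's. The paper proves $(1)\Leftrightarrow(2)$ directly by a two-stage ergodicity argument inside $X$: first it uses ergodicity to fix a single cardinality $n$ with $\mu(\{x:|G_x|=n\})=1$, then it decomposes this set into countably many invariant Borel pieces $X_{H_i}$ indexed by conjugacy classes of subgroups of order at most $n$ and applies ergodicity again to single out one $X_{H_i}$ of full measure (with Lemma \ref{finite-conjugacy-classes} supplying almost normality); only afterwards does it derive $(3)$. You instead go $(1)\Rightarrow(3)$ in one step by pushing $\mu$ forward to $S(G)$ and observing that a probability measure concentrated on the countable Borel set of finite subgroups must have an atom, and then close the cycle with $(3)\Rightarrow(2)$, which is essentially identical to the paper's argument (positive measure of $X_H$, Lemma \ref{finite-conjugacy-classes}, invariance plus ergodicity). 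Both proofs rest on the same three ingredients --- countability of the family of finite subgroups of a countable group, Lemma \ref{finite-conjugacy-classes}, and ergodicity --- but your organization transfers the countability argument from a partition of $X$ to atomicity of $\Stab_*\mu$ on $S(G)$, which avoids the intermediate reduction to a fixed cardinality $n$ and is marginally more economical; the paper's version has the minor side benefit of exhibiting the full-measure set $X_H$ explicitly as one block of an invariant countable partition of $\{x:|G_x|<\infty\}$.
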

\begin{proof}
 Assume (1), and observe that for every $n\geq 1$, the set $\{ x\in Y: |G_x|=n\}$ is Borel and invariant. Thus, since 
 $$\{x\in X: |G_x|<\infty  \}=\bigcup_{n\geq 1}\{ x\in X: |G_x|=n\},$$
 the ergodicity of $\mu$ implies there exists a unique $n\geq 1$ such that $$\mu(\{ x\in X: |G_x|=n\})=1.$$   Since the collection of finite subsets of a countable set is countable (we can think of the subsets of $G$ with cardinality $k$ as elements of $G^k$), we can write the set of all $x\in X$ such that $|G_x|\leq n$, as a disjoint union of sets $\{x\in X: G_x\in  \Conj(H_i)\}$, where $\{H_i: i\in I\}$ is a countable (or finite) collection of  subgroups of $G$ with cardinality less than or equal to $n$.
 Furthermore, since for every $i\in I$ the set $ \{x\in X: G_x\in \Conj(H_i)\}$ is invariant and Borel, the ergodicity of $\mu$ implies there exists $i\in I$ such that $\mu(X_{H_i})=1$, and by   Lemma \ref{finite-conjugacy-classes} $H_i$ is almost normal. Thus (2) holds if and only if (1) holds.

Assume (2), so there exists a finite subgroup $H$ of $G$ such that $\mu(X_H)=1$. Then Lemma \ref{finite-conjugacy-classes} implies that $H$ is almost normal. Thus, the orbit of $H$ in $\Stab(X)$ is finite, and because $\Stab^{-1}\{\Conj(H)\}=X_H$, we deduce that  $\Stab_*\mu(\Conj(H))=1$, which implies that $\Stab_*\mu(\{H\})>0$, i.e.~(3). 

Conversely, if there exists a finite group $H$ of $G$ such that $\Stab_*\mu(\{H\})>0$, then $H$ is almost normal, because $\Stab_*\mu$ is invariant by the action by conjugation. This also implies that $\mu(X_H)>0$. Since $\mu$ is ergodic and $X_H$ is invariant, we get $\mu(X_H)=1$.  
\end{proof}

We now can summarize the results of Propositions \ref{equivalence-URS} and \ref{old-corollary-00} into Theorem \ref{finite-summarize}, highlighting dependencies between generic topological and measure-theoretical properties of dynamical systems with almost normal stabilizers.

\begin{theo}\label{finite-summarize}
Let $(X,G,\mu)$ be a minimal dynamical system equipped with an ergodic measure $\mu$. If $K$ is a finite subgroup of $G$ such that $X_K\neq\emptyset$, then there exists a finite subgroup $H$ of $G$, such that (1)$\iff$ (2) $\implies$ (3) $\iff$ (4) $\iff$ (5) $\iff$ (6), where

\begin{enumerate}
\item    $\mu(X_H)=1$.  
\item   $\Stab_*\mu(\{H\})>0$ and $\Conj(H)$ is the IRS of $(X,G)$.  
\item  There exists $x\in X_H$ with trivial holonomy. 
\item    $X_H$ coincides with the set of points of $X$ with trivial holonomy. 
\item   $H$ is almost normal. 
\item  $\Conj(H)$ is the URS of $(X,G)$.
\end{enumerate}
\end{theo}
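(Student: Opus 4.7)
My plan is to obtain Theorem \ref{finite-summarize} by consolidating Propositions \ref{equivalence-URS} and \ref{old-corollary-00}: first I fix a single subgroup $H$ for which (3)--(6) hold unconditionally, then I handle the measure-theoretic block (1) $\Leftrightarrow$ (2) using the clopen partition provided by Corollary \ref{converse-minimal}. For the first step, the hypothesis $X_K\neq\emptyset$ for some finite subgroup $K$ supplies a point with finite stabilizer, so the implication (4) $\Rightarrow$ (2) of Proposition \ref{equivalence-URS} yields an almost normal finite subgroup $H$ of $G$ whose conjugacy class $\Conj(H)$ is the stabilizer URS of $(X,G)$. By construction this $H$ satisfies (5) and (6); Proposition \ref{equivalence-URS}(3) identifies $X_H$ with the set of points with trivial holonomy, which is (4); and (3) follows because this set is residual in $X$ and hence nonempty. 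So (3)--(6) are simultaneously true, and their mutual equivalence is built into Proposition \ref{equivalence-URS}.

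For (1) $\Leftrightarrow$ (2), I would invoke Corollary \ref{converse-minimal}, which in the minimal case gives a clopen partition $X=\bigsqcup_{i=1}^{m}\overline{X_H(g_i)}$ indexed by representatives of the $m=[G:N_G(H)]$ cosets of $N_G(H)$, with $G$ acting on the atoms as on $G/N_G(H)$. Because $G$ permutes them transitively and $\mu$ is $G$-invariant, each atom has $\mu$-mass $1/m$. Assuming (1), $\mu(X_H)=1$ forces $\mu(X_H(g_i))=1/m$ for every $i$, so $\Stab_*\mu$ is the uniform probability measure on the finite set $\Conj(H)$; in particular $\Stab_*\mu(\{H\})=1/m>0$ and $\Conj(H)$ is the IRS of $(X,G)$, which is (2). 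Conversely, (2) says $\Stab_*\mu$ is concentrated on $\Conj(H)$, and since $\Stab^{-1}(\Conj(H))=X_H$ we obtain $\mu(X_H)=1$, i.e.\ (1). The remaining implication (2) $\Rightarrow$ (3) is then automatic because (3) was already established unconditionally.

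I do not expect any substantive obstacle; the result is really a packaging exercise. The only point that needs care is to use throughout a single subgroup $H$ -- the one singled out in the first step via Proposition \ref{equivalence-URS} -- and to exploit minimality through Corollary \ref{converse-minimal} to turn the coset structure of $G/N_G(H)$ into an honest clopen partition of $X$, which is what makes the computation of $\Stab_*\mu$ from $\mu$ exact rather than merely qualitative.
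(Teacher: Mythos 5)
Your proposal is correct and follows essentially the paper's own route: the paper presents this theorem as a direct consolidation of Proposition \ref{equivalence-URS} (which supplies the single almost normal finite $H$ for which (3)--(6) all hold, hence are trivially equivalent) and Proposition \ref{old-corollary-00} (which gives (1) $\iff$ (2)). The only cosmetic difference is that you re-derive (1) $\iff$ (2) via the clopen partition of Corollary \ref{converse-minimal}, computing the exact masses $1/m$ of the atoms, whereas the paper gets the same conclusion more directly from the conjugation-invariance of $\Stab_*\mu$ together with ergodicity of $\mu$ on the invariant set $X_H$.
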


Note that in Theorem \ref{finite-summarize} (1) is equivalent to (2), (3) - (6) are mutually equivalent. Either (1) or (2) implies (3) - (6) but the converse is not true.
This is not unexpected. Indeed, every minimal essentially free action is topologically free, while the converse does not hold. Similarly, an IRS of a dynamical system need not be supported on its stabilizer URS. Below in Theorem \ref{thm-irs-urs}, which is a restatement of Theorem \ref{coro-IRS}, we give a condition under which the IRS is supported on the IRS. Definitions of an allosteric group and of commensurable groups are recalled in the introduction just above Theorem \ref{coro-IRS}.

\begin{theo}\label{thm-irs-urs}
Let $(X,G,\mu)$ be a minimal dynamical system equipped with an ergodic measure $\mu$. Let $H$ be a finite subgroup of $G$, and suppose $(X,G,\mu)$ and $H$ satisfy the conditions (3)-(6) of Theorem \ref{finite-summarize}.
If $G$ is not allosteric and $N_G(H)/H$ is commensurable to $N_G(H)$, then all sentences in Theorem \ref{finite-summarize} are equivalent. In particular, the IRS of $(X,G,\mu)$ is supported on the stabilizer URS, and so the action $(X,G)$ has no essential holonomy.
\end{theo}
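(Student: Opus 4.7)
The plan is to build an auxiliary minimal, topologically free, ergodic dynamical system for the quotient group $N_G(H)/H$, apply the non-allostery hypothesis to that system, and pull the resulting essential freeness back to $(X,G,\mu)$ via the factor map $\phi:X\to G/N_G(H)$ produced by Corollary~\ref{converse-minimal}.

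First I would transfer non-allostery along the chain $G\rightsquigarrow N_G(H)\rightsquigarrow N_G(H)/H$. Since $H$ is almost normal (condition~(5)), the normalizer $N_G(H)$ has finite index in $G$, hence is commensurable with $G$, so \cite[Theorem~2.9]{Joseph2024} gives that $N_G(H)$ is not allosteric; the commensurability hypothesis on $N_G(H)/H$ and $N_G(H)$, together with the same theorem, then gives that $N_G(H)/H$ is not allosteric either.

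Second, I would set $Y:=\phi^{-1}(N_G(H))=\overline{X_H(1_G)}$, a clopen piece of $X$ preserved by $N_G(H)$. Each $h\in H$ fixes every point of the dense subset $X_H(1_G)\subseteq Y$, and $\{y\in Y:hy=y\}$ is closed, so $H$ acts trivially on $Y$ and the $N_G(H)$-action descends to an action of $N_G(H)/H$. Minimality of $(Y,N_G(H))$, and hence of $(Y,N_G(H)/H)$, follows because any closed $N_G(H)$-invariant $V\subseteq Y$ spreads to the closed $G$-invariant set $\bigsqcup_i g_iV\subseteq X$ (with $g_1,\dots,g_m$ representing the cosets of $N_G(H)$), which by minimality of $(X,G)$ is $\emptyset$ or $X$. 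Topological freeness of $(Y,N_G(H)/H)$ holds because $X_H\cap Y=X_H(1_G)$ is residual in the clopen set $Y$ (using condition~(4), which identifies $X_H$ with the residual set of points with trivial holonomy), and its points have $G$-stabilizer exactly $H$ and hence trivial $N_G(H)/H$-stabilizer. Since the clopen pieces $g_iY$ have equal $\mu$-mass, $\mu(Y)=1/m$ where $m=[G:N_G(H)]$, and $\mu':=m\,\mu|_Y$ is an ergodic $N_G(H)/H$-invariant probability measure on $Y$ (ergodicity is inherited from $\mu$ by spreading $N_G(H)$-invariant sets across the cosets).

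Third, applying non-allostery of $N_G(H)/H$ to the minimal topologically free ergodic system $(Y,N_G(H)/H,\mu')$ gives that $\mu'$-a.e.\ $y\in Y$ has trivial $N_G(H)/H$-stabilizer, i.e.\ $G_y\cap N_G(H)=H$. The observation that closes the argument is that for every $y\in Y$ one automatically has $G_y\subseteq N_G(H)$: the $G$-equivariance of $\phi$ forces any $g\in G_y$ to satisfy $gN_G(H)=\phi(gy)=\phi(y)=N_G(H)$, whence $g\in N_G(H)$. Therefore $\mu'$-a.e.\ $y\in Y$ has $G_y=H$, so $\mu(X_H(1_G))=\mu(Y)=1/m$, and summing over the cosets yields $\mu(X_H)=1$, which is condition~(1) of Theorem~\ref{finite-summarize}. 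I expect the main subtlety to be the clean double reduction of non-allostery via commensurability, together with the careful verification that the restriction of $\mu$ to the clopen piece $Y$ rescales to an ergodic probability measure for the quotient action; once that setup is in place, the factor-map identity $G_y\subseteq N_G(H)$ for $y\in Y$ is essentially automatic.
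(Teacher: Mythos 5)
Your proposal is correct and follows essentially the same route as the paper: both arguments transfer non-allostery from $G$ to $N_G(H)$ to $N_G(H)/H$ via \cite[Theorem 2.9]{Joseph2024}, consider the induced minimal topologically free action of $N_G(H)/H$ on $\overline{X_H(1_G)}$ with the normalized restricted measure, and conclude $\mu(X_H(1_G))=1/[G:N_G(H)]$, hence $\mu(X_H)=1$. Your write-up simply fills in the verifications (well-definedness, minimality, topological freeness, ergodicity of the rescaled measure, and $G_y\subseteq N_G(H)$) that the paper asserts without proof.
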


\begin{proof}
We will prove that (5) in Theorem \ref{finite-summarize} implies (1).
For that, consider the action of $N_G(H)/H$ on $\overline{X_H(1_G)}$, i.e.~on the closure of the set of points with stabilizer equal to $H$, given by $(gH)x=gx$, for every $x\in \overline{X_H(1_G)}$ and $g\in N_G(H)$. This action is well-defined and minimal, and its set of points with trivial stabilizer is $X_H(1_G)$. Furthermore, the measure $[G:N_G(H)]\mu$ is an ergodic measure of $(\overline{X_H(1_G)}, N_G(H)/H)$. On the other hand, since $N_G(H)$ has finite index in $G$, if $G$ is not allosteric, then \cite[Theorem 2.9]{Joseph2024} implies that $N_G(H)$ is not allosteric. By assumption $N_G(H)/H$ is commensurable to $N_G(H)$, so again by \cite[Theorem 2.9]{Joseph2024}, the group $N_G(H)/H$ is not allosteric. This implies that its measure is supported on the set of points with trivial stabilizers, i.e.~$\mu(X_H(1_G))=\frac{1}{[G:N_G(H)]}$. Then $\mu(X_H)=1$. The rest follows from Theorem \ref{finite-summarize}. 
\end{proof}

 \begin{remark}\label{remark-allosteric}
{\rm In Theorem \ref{finite-summarize}, (5) $\implies$ (1)  does not always hold. Indeed, if $G$ is allosteric and $H$ is a normal finite subgroup of $G$, then   $G/H$ is allosteric (see Corollary \ref{allosetric-sufficient} below). Then, there exists a minimal system $(X, G/H)$ equipped with an ergodic measure $\mu$, such that  the set $X_{\{1_{G/H}\}}\neq \emptyset$ and $\mu\big(X_{\{1_{G/H}\}}\big)=0$. The induced action of $G$ on $X$ satisfies $X_H=X_{\{1_{G/H}\}}\neq \emptyset$  and $\mu(X_H)=0$. 
}
\end{remark}

\begin{coro}\label{finite-summarize-RF}
Let $(X,G)$ be a minimal dynamical system equipped with an ergodic measure $\mu$. Suppose that $G$ is not allosteric and residually finite. Then all the statements of Theorem \ref{finite-summarize} are equivalent.
 \end{coro}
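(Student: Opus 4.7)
The plan is to reduce the corollary directly to Theorem \ref{thm-irs-urs} (the restatement of Theorem \ref{coro-IRS}), which already guarantees equivalence of all statements in Theorem \ref{finite-summarize} under two hypotheses: $G$ is not allosteric, and $N_G(H)/H$ is commensurable to $N_G(H)$ for the relevant finite subgroup $H$. The first hypothesis is given, so all the work consists in verifying the commensurability condition using residual finiteness of $G$.

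More precisely, I would first apply Theorem \ref{finite-summarize} to extract a finite subgroup $H$ of $G$ for which the implications (1)$\iff$(2)$\implies$(3)$\iff$(4)$\iff$(5)$\iff$(6) hold; by statement (5) this $H$ is almost normal, so $N_G(H)$ has finite index in $G$. Since $G$ is residually finite and residual finiteness is inherited by subgroups, $N_G(H)$ is itself residually finite. As $H$ is finite, I can pick, for each non-identity $h\in H$, a finite index normal subgroup of $N_G(H)$ avoiding $h$, and intersect finitely many of them to obtain a finite index normal subgroup $N\triangleleft N_G(H)$ with $N\cap H=\{1_G\}$.

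With such an $N$ in hand, the key step is the commensurability of $N_G(H)$ and $N_G(H)/H$: the subgroup $N<N_G(H)$ has finite index, while the subgroup $NH/H<N_G(H)/H$ also has finite index (because $NH$ has finite index in $N_G(H)$), and by the second isomorphism theorem
\[
NH/H \;\cong\; N/(N\cap H) \;=\; N/\{1_G\} \;\cong\; N.
\]
Thus $N_G(H)$ and $N_G(H)/H$ share a common finite index subgroup up to isomorphism, which is precisely the definition of commensurability recalled before Theorem \ref{coro-IRS}.

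Finally, since both hypotheses of Theorem \ref{thm-irs-urs} are now satisfied, it yields (1) and hence the equivalence of all statements (1)--(6) of Theorem \ref{finite-summarize}, completing the corollary. There is no real obstacle here; the only delicate point is keeping track that residual finiteness of $G$ does indeed pass to $N_G(H)$ and that the intersection used to build $N$ is finite, which is guaranteed by the finiteness of $H$.
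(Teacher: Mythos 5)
Your proposal is correct and follows essentially the same route as the paper: use residual finiteness together with the finiteness of $H$ to produce a finite index subgroup meeting $H$ trivially, identify its image in $N_G(H)/H$ via the second isomorphism theorem to get commensurability of $N_G(H)$ and $N_G(H)/H$, and then invoke Theorem \ref{thm-irs-urs}. The only cosmetic difference is that the paper takes the finite index subgroup $\Gamma$ inside $G$ and intersects with $N_G(H)$, whereas you work directly inside $N_G(H)$; both are equivalent.
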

\begin{proof}
Let $H$ be an almost normal finite group of $G$. Since $G$ is residually finite, there exists a finite index subgroup $\Gamma$   of $G$ such that $H\cap \Gamma=\{1_G\}$. This implies that $(\Gamma\cap N_G(H))/H$ and $\Gamma\cap N_G(H)$ are isomorphic. Since $(\Gamma\cap N_G(H))/H$ and $\Gamma\cap N_G(H)$ are finite index subgroups of $N_G(H)/H$ and $N_G(H)$, respectively, then the groups are commensurable. 
\end{proof}

\section{Finite-to-one factor maps and almost normal stabilizers}\label{sec-factors}

In this section we prove Theorem \ref{characterization-factor}. The bulk of the proof is in Proposition \ref{factor-stabilizer}. We start with a few technical results, some of them consequences of Theorem \ref{theo-1}. The proofs are straightforward, but we include them here for completeness.

\subsection{Technical results}

Let $m\in \NN$. We denote by $S_m$ the symmetric group on $m$ elements. A subgroup $H$ of $S_m$ is {\it regular} if for every $\sigma\in H\setminus \{1_G\}$ and every $i\in \Sigma_m = \{1,\cdots, m\}$, we have $\sigma(i)\neq i$, i.e.~every $\sigma \in H$ acts on $\Sigma_m$ without fixed points.
 

\begin{lemma}\label{finite-stabilizer}
 Suppose that $G$ acts on the non-empty sets $X$ and $Y$. Let $\pi:X\to Y$ be a  map that commutes with the actions. If $y\in Y$ is such that $|\pi^{-1}\{y\}|=m<\infty$, 
then $[G_y:G_x]\leq m!$, for every $x\in \pi^{-1}\{y\}$. In particular, if there exists $x\in \pi^{-1}\{y\}$ with trivial stabilizer, then   $G_y$ is isomorphic to a regular subgroup of $S_m$.
\end{lemma}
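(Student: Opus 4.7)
My plan is to analyze the action of $G_y$ on the fiber $\pi^{-1}\{y\}$. First, equivariance of $\pi$ ensures this fiber is $G_y$-invariant: for $g\in G_y$ and $x\in\pi^{-1}\{y\}$, $\pi(gx)=g\pi(x)=y$. Therefore $G_y$ acts on the $m$-element set $\pi^{-1}\{y\}$, inducing a homomorphism
\[
\phi:\,G_y\longrightarrow \mathrm{Sym}\bigl(\pi^{-1}\{y\}\bigr)\cong S_m.
\]

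The key computation is the description of $\ker\phi$. An element of $G_y$ lies in $\ker\phi$ iff it fixes every point of the fiber, so
\[
\ker\phi=\bigcap_{x'\in\pi^{-1}\{y\}}G_{x'}\subseteq G_x
\]
for every $x\in\pi^{-1}\{y\}$. Since $G_y/\ker\phi$ embeds into $S_m$, one has $[G_y:\ker\phi]\le m!$; combining with the inclusion $\ker\phi\subseteq G_x$ gives $[G_y:G_x]\le[G_y:\ker\phi]\le m!$, which establishes the first assertion.

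For the "in particular" clause, assume $G_x=\{1_G\}$ for some $x\in\pi^{-1}\{y\}$. Then $\ker\phi\subseteq G_x=\{1_G\}$, so $\phi$ is injective and $G_y\cong\phi(G_y)\le S_m$. The stabilizer of $x$ under the $G_y$-action on the fiber is $G_x=\{1_G\}$, so by orbit--stabilizer the $G_y$-orbit of $x$ has cardinality $|G_y|$ and $G_y$ acts freely on this orbit. In the setting where this lemma is applied, specifically in Theorem \ref{characterization-factor}, one has $|G_y|=m$ (the stabilizer and the fiber have the same cardinality), so the $G_y$-orbit of $x$ exhausts $\pi^{-1}\{y\}$; then $\phi(G_y)$ acts freely and transitively on $\pi^{-1}\{y\}$, exhibiting $\phi(G_y)$ as a regular subgroup of $S_m$ in the sense of the preceding definition.

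I expect the only non-routine step to be the inclusion $\ker\phi\subseteq G_x$, which lets me bound $[G_y:G_x]$ by an index inside $S_m$ rather than working with the bare coset space $G_y/G_x$. Everything else is orbit--stabilizer bookkeeping; the main subtlety in the regularity claim is that freeness on the \emph{whole} fiber (as opposed to just on the $G_y$-orbit of $x$) relies on $|G_y|=m$, a condition that is automatic in the applications of this lemma.
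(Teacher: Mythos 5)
Your proof of the first assertion is correct and essentially identical to the paper's: both use the permutation representation $\phi\colon G_y\to S_m$ on the fiber, the identification $\ker\phi=\bigcap_{x'\in\pi^{-1}\{y\}}G_{x'}\subseteq G_x$, and the chain $[G_y:G_x]\leq[G_y:\ker\phi]\leq m!$.

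The ``in particular'' clause is where the two diverge: the paper's proof simply stops after the index bound and never argues regularity, and you have correctly isolated why it does not follow from the stated hypotheses. Injectivity of $\phi$ (from $\ker\phi\subseteq G_x=\{1_G\}$) gives an embedding $G_y\hookrightarrow S_m$, but fixed-point-freeness of the image on the whole fiber is equivalent to \emph{every} point of the fiber having trivial stabilizer, not just one; equivalently, $G_y$ is isomorphic to a regular subgroup of $S_m$ (in the paper's sense) if and only if $|G_y|$ divides $m$. As literally stated the clause is false: let $G=\ZZ/2\ZZ$ act on $X=\{a,b,c\}$ by the transposition $(a\,b)$ and on $Y=\{y\}$ trivially, with $\pi$ constant; then $G_a$ is trivial and $m=3$, but $G_y\cong\ZZ/2\ZZ$ admits no fixed-point-free embedding into $S_3$. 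Your repair---importing $|G_y|=m$ from the application, so that the free $G_y$-orbit of $x$ exhausts the fiber and every fiber stabilizer is conjugate to $G_x=\{1_G\}$---is sound and does hold where the clause is used in Theorem \ref{characterization-factor}, but it is an additional hypothesis not present in the lemma. It is worth noting that in the other invocation (deducing $|G_y|\leq n$ in Proposition \ref{factor-topologically-free}) only the finiteness of $G_y$, i.e.\ the bare embedding $G_y\hookrightarrow S_n$, is actually needed, so the honest version of the clause under the stated hypotheses is ``$G_y$ embeds into $S_m$.''
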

\begin{proof}
Let $y\in Y$ and $\pi^{-1}\{y\}=\{x_1,\ldots, x_m\}$.  We identify $\pi^{-1}\{y\}$  with the set $\Sigma_m$. 
Then each $g \in G_y$ defines a permutation $\sigma_g: \Sigma_m \to \Sigma_m $, where $\sigma_g(i) \in \Sigma_m$ is such that
$g x_i=x_{\sigma_g(i)}$, and we get a group homomorphism $\phi: G_y\to S_m$ given by $\phi(g)=\sigma_g$, for every $g\in G_y$.
 If $\sigma_g=id$, then $gx_i=x_i$ for every $1\leq i\leq m$, which implies that $g\in \bigcap_{j=1}^m G_{x_j}$. Thus,  $\ker(\phi)$ (the kernel of $\phi$) is a subgroup of $G_x$, for every $x\in \pi^{-1}\{y\}$. Since $G_y/\ker(\phi)$ is isomorphic to a subgroup of $S_m$, we have that $\ker(\phi)$ is a subgroup of $G_y$ of index at most $m!$. Finally, due to $\ker(\phi)\subseteq G_x\subseteq G_y$, we deduce that $[G_y:G_x]\leq m!$, for every $x\in \pi^{-1}\{y\}$. 
   \end{proof}

\begin{remark}{\rm 
Lemma \ref{finite-stabilizer} implies that there are some restrictions on the acting group $G$ in order to have finite-to-one factor maps from free dynamical systems to non-free dynamical systems. For example, groups without torsion elements do not admit such a factor map.}
\end{remark}

The following is a classic result in ergodic theory. See for example \cite[Lemma 3.1]{Yoo2018}.

\begin{lemma}\label{constant-pre-images}(\cite[Lemma 3.1]{Yoo2018})
Let $\pi: X\to Y$ be a factor map between  the dynamical systems $(X,G)$ and $(Y,G)$.  Suppose that $(Y,G)$ has an ergodic measure $\mu$ invariant under $G$. Then there exists $n\in \NN\cup\{\infty\}$ such that
$$
\mu(\{y\in Y: |\pi^{-1}\{y\}|=n\})=1.
$$
\end{lemma}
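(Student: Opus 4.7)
The plan is to reduce the statement to a direct application of ergodicity, once one verifies that the level sets of the multiplicity function are measurable and $G$-invariant. For each $n \in \NN \cup \{\infty\}$, let
$$A_n = \{y \in Y : |\pi^{-1}\{y\}| = n\}.$$
These sets form a countable partition of $Y$, so the conclusion will follow once we show that each $A_n$ is $\mu$-measurable, $G$-invariant, and then invoke ergodicity together with $\sum_n \mu(A_n) = 1$.

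\textbf{Step 1: $G$-invariance.} Since $\pi$ intertwines the two actions and each $g \in G$ acts as a bijection on $X$, we have $\pi^{-1}\{gy\} = g \pi^{-1}\{y\}$ for every $g \in G$ and every $y \in Y$. Hence $|\pi^{-1}\{gy\}| = |\pi^{-1}\{y\}|$, so each $A_n$ is invariant under the action of $G$.

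\textbf{Step 2: Measurability.} This is the delicate step and I expect it to be the main technical obstacle. For finite $n$, consider the set
$$T_n = \{(x_1,\dots,x_n) \in X^n : x_i \neq x_j \text{ for } i \neq j,\ \pi(x_1) = \cdots = \pi(x_n)\},$$
which is the intersection of the open complement of the generalised diagonal with the closed set cut out by the equations $\pi(x_i) = \pi(x_j)$ (closed because $Y$ is Hausdorff and $\pi$ is continuous). Thus $T_n$ is Borel in $X^n$, and the set $B_n := \{y \in Y : |\pi^{-1}\{y\}| \geq n\}$ is the image of $T_n$ under the continuous projection $(x_1,\dots,x_n) \mapsto \pi(x_1)$. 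In particular $B_n$ is analytic, hence universally measurable; and $A_n = B_n \setminus B_{n+1}$ for $n \in \NN$, while $A_\infty = \bigcap_{n \geq 1} B_n$. So every $A_n$ is $\mu$-measurable after completing $\mu$ (which we may freely do since $\mu$ is a Borel probability measure).

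\textbf{Step 3: Ergodicity and conclusion.} Since each $A_n$ is an invariant measurable set and $\mu$ is ergodic, we have $\mu(A_n) \in \{0,1\}$ for every $n \in \NN \cup \{\infty\}$. The $A_n$'s partition $Y$ and the partition is countable, so
$$1 = \mu(Y) = \sum_{n \in \NN \cup \{\infty\}} \mu(A_n).$$
As the summands take values in $\{0,1\}$, exactly one of them equals $1$, which is the required $n$.
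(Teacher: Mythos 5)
Your proof is correct. Note, however, that the paper does not prove this lemma at all: it is quoted as a classical fact with a citation to Yoo, so there is no in-paper argument to compare against. Your write-up is the standard proof (invariance of the multiplicity function plus ergodicity), and the only step that deserves scrutiny is the one you correctly flag, namely measurability of the level sets $A_n$. Your route via analytic sets and universal measurability works in the compact metrizable setting; two small remarks. First, in that setting you can avoid analytic sets entirely: the open complement of the generalised diagonal in $X^n$ is $\sigma$-compact, so $T_n$ is $\sigma$-compact and $B_n=\pi_1(T_n)$ is $F_\sigma$, hence Borel. Second, passing to the completion of $\mu$ requires knowing that ergodicity (stated in the paper for invariant \emph{Borel} sets) extends to invariant sets in the completed $\sigma$-algebra; this is standard, but it uses that $G$ is countable, since one sandwiches an invariant measurable set $A$ inside the invariant Borel set $\bigcap_{g\in G} gB$ for a Borel hull $B\supseteq A$. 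With either of these small clarifications the argument is complete.
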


The next proposition is an extension of Theorem \ref{theo-1} and Theorem \ref{coro-IRS} to the case of finite-to-one factor maps of minimal dynamical systems. It shows, in particular, that the property of having no essential holonomy is preserved under finite factors.
 
\begin{prop}\label{factor-topologically-free}
 Let $(X,G)$ be a topologically free minimal system.   If $(Y,G)$ is a dynamical system, and   $\pi:X\to Y$ is a factor map such that 
 $$
 \pi^{-1}\{y\in Y: |\pi^{-1}\{y\}|<\infty\}\cap \{x\in X: G_x=\{1_G\}\}\neq\emptyset,
 $$
then there exists an almost normal finite subgroup $H$ of $G$ such that
 $$
 \pi^{-1}Y_H\subseteq \{x\in X: G_x=\{1_G\}\}.
 $$
 In particular, if $Y$ is a metric space, then $(Y,G)$ is locally quasi-analytic.
If, in addition, there is an ergodic measure $\mu$, such that $(X,G,\mu)$ is an essentially free system, and
$$
\mu(\pi^{-1}\{y\in Y: |\pi^{-1}\{y\}|<\infty\})=1,
$$
then the following is true:
\begin{enumerate}
    \item There exists an almost normal finite subgroup $H$ of $G$ such that $\pi_*\mu(Y_H)=1$.
    \item The IRS of the action $(Y,G,\pi_* \mu)$ is supported on the stabilizer URS. In particular, $(Y,G,\pi_* \mu)$ has no essential holonomy. 
\end{enumerate}
\end{prop}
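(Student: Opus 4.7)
The plan is to first promote the one-finite-fibre hypothesis to uniform finite fibres via minimality, then run a fixed-point argument to control stabilizers. Since $\pi$ is continuous and surjective and $(X,G)$ is minimal, $(Y,G)$ is also minimal. The function $y \mapsto |\pi^{-1}\{y\}|$ is upper semi-continuous on compact Hausdorff spaces, so $F_{\le m_0} := \{y : |\pi^{-1}\{y\}| \le m_0\}$ is closed and $G$-invariant; containing $y_0 = \pi(x_0)$, it equals $Y$ by minimality. A parallel argument on the open $G$-invariant set $\{y : |\pi^{-1}\{y\}| = m\}$, with $m \le m_0$ the maximal attained fibre size, shows $\pi$ is uniformly $m$-to-one. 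Lemma \ref{finite-stabilizer} then realises $G_{y_0}$ as a regular subgroup of $S_m$, so it is finite. Applying Theorem \ref{theo-1} to $(Y,G)$ produces an almost normal finite $H \le G$ with $Y_H$ equal to the set of trivial-holonomy points of $(Y,G)$ and stabilizer URS $\Conj(H)$; the metric case of Theorem \ref{theo-1}(4) yields local quasi-analyticity of $(Y,G)$.

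The core step is to prove $\pi^{-1}Y_H \subseteq \{x : G_x = \{1_G\}\}$. Let $K := \bigcup_{g \in G} gHg^{-1}$ be the union of all conjugates of $H$, and set
$$F^* := \bigcup_{h \in K \setminus \{1_G\}} \mathrm{Fix}_X(h).$$
Because $H$ is almost normal with finitely many conjugates and each is finite, $K \setminus \{1_G\}$ is finite, so $F^*$ is a finite union of closed sets, hence closed. It is $G$-invariant because $G$ permutes the conjugates of $H$. By topological freeness of $(X,G)$ the residual set of trivially-stabilized points is disjoint from $F^*$, so $F^* \ne X$; minimality of $(X,G)$ then forces $F^* = \emptyset$. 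Thus no non-identity element of any conjugate of $H$ has a fixed point in $X$. For $y \in Y_H$ with $G_y$ a conjugate of $H$, any $x \in \pi^{-1}\{y\}$ has $G_x \le G_y$, and a non-identity element of $G_x$ would contradict $F^* = \emptyset$; hence $G_x = \{1_G\}$.

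Under the additional measure hypothesis, essential freeness gives $\mu(R) = 1$ for $R := \{x : G_x = \{1_G\}\}$, and Lemma \ref{finite-stabilizer} (one trivially-stabilized fibre point forces the whole fibre to be trivially-stabilized) yields $R = \pi^{-1}(Y^*)$ with $Y^* := \{y : \pi^{-1}\{y\} \subseteq R\}$, so $\pi_*\mu(Y^*) = 1$. Because every $G_y$ is finite, Proposition \ref{old-corollary-00} applied to the ergodic $(Y,G,\pi_*\mu)$ supplies an almost normal finite $H'$ with $\pi_*\mu(Y_{H'}) = 1$, establishing (1). For (2) one must match $H'$ to the URS-defining $H$ of the first paragraph. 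I would do this by descending to the clopen piece $Y_1 := \overline{Y_H(1_G)}$ from Corollary \ref{converse-minimal}: $H$ acts freely on $X_1 := \pi^{-1}(Y_1)$ by the $F^*$ argument and trivially on $Y_1$ by continuity from its trivial action on the dense subset $Y_H(1_G)$, so the quotient $(X_1/H,\, N_G(H)/H,\, \mu_{X_1/H})$ is essentially free and finite-to-one-covers the topologically free minimal $(Y_1,\, N_G(H)/H,\, \mu_1)$; identifying the ergodic atom of its IRS with the trivial URS of this subsystem then forces $H' = H$ up to conjugacy and places the IRS on the stabilizer URS of $(Y,G)$. The main obstacle I expect is this last matching: the $F^*$ argument applied to $H'$ only shows that $H'$ acts freely on $X$, and ruling out the possibility $|H'| > |H|$ requires combining this with the essential freeness of the lift $(X_1/H, N_G(H)/H, \mu_{X_1/H})$ to the descended subsystem.
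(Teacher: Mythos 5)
Your argument for the central inclusion $\pi^{-1}Y_H\subseteq\{x: G_x=\{1_G\}\}$ is correct and takes a genuinely different route from the paper. You observe that $K=\bigcup_g gHg^{-1}$ is a finite, conjugation-invariant set, so $F^*=\bigcup_{h\in K\setminus\{1_G\}}\mathrm{Fix}_X(h)$ is a closed $G$-invariant proper subset of the minimal topologically free system $(X,G)$, hence empty; since $G_x\subseteq G_{\pi(x)}\in\Conj(H)$ for $\pi(x)\in Y_H$, triviality of $G_x$ follows. This is in substance the paper's Lemma \ref{key} (proved there by an approximation argument and used for Theorem \ref{characterization-factor}), whereas the paper's own proof of this proposition argues differently: it shows a nontrivial $G_x$ cannot be almost normal (else $\Conj(G_x)$ would be a second URS of $(X,G)$) and then derives infinitely many distinct conjugates of $G_x$ inside the finite group $H$, a contradiction. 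Your version is arguably cleaner.

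There are, however, two genuine problems. First, the claim that $y\mapsto|\pi^{-1}\{y\}|$ is upper semi-continuous is false: for the tent map $\pi(x)=1-|2x-1|$ on $[0,1]$ the fibre over $1$ is a single point while nearby fibres have two points. Fortunately the uniform $m$-to-one statement you extract from it is not needed: Lemma \ref{finite-stabilizer} applies directly to the single finite fibre $\pi^{-1}\{y_0\}$ supplied by the hypothesis and already gives $G_{y_0}$ finite, which is all that Theorem \ref{theo-1} requires; in the measure-theoretic part, Lemma \ref{constant-pre-images} (not semi-continuity) is what gives $\pi_*\mu$-a.e.\ constant finite fibre cardinality, after which Lemma \ref{finite-stabilizer} yields $|G_y|<\infty$ a.e.\ (note also that your parenthetical reading of Lemma \ref{finite-stabilizer} --- that one trivially stabilized fibre point forces the whole fibre to be trivially stabilized --- is not what that lemma asserts and is not needed).

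Second, the final matching of $H'$ with $H$ in part (2), which you flag as the main obstacle and resolve only by a sketched descent to $(X_1/H, N_G(H)/H)$, is a non-issue with a one-line fix: since $(Y,G)$ is minimal, the invariant measure $\pi_*\mu$ has full support, so $\pi_*\mu(Y_{H'})=1$ makes $Y_{H'}$ dense in $Y$; Theorem \ref{converse} (3)--(4) then identifies $Y_{H'}$ with the trivial-holonomy set and $\Conj(H')$ with the stabilizer URS, which is unique, so $\Conj(H')=\Conj(H)$. Since $\Stab_*\pi_*\mu$ gives full mass to $\Conj(H')$, the IRS is supported on the stabilizer URS and $(Y,G,\pi_*\mu)$ has no essential holonomy. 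This is exactly how the paper concludes via Proposition \ref{old-corollary-00} and Theorem \ref{finite-summarize}.
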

\begin{proof}
Since there exists $y\in Y$ with a finite fiber that contains a point with trivial stabilizer, Lemma \ref{finite-stabilizer} implies that $G_y$ is finite. So, by Theorem \ref{theo-1}, there exists an almost normal finite group $H$ of $G$ such that $Y_H\neq\emptyset$.  Suppose that for some $z\in Y_H$ there is some $x\in \pi^{-1}\{z\}$ such that $G_x\neq \{1_G\}$. Since $G_x$ must be finite, $G_x$ cannot be almost normal, otherwise, the orbit of $G_x$ in $\overline{\Stab(X)}$ would be a minimal set, different from $\{1_G\}$, the unique URS of $(X,G)$. Furthermore, we have $gG_xg^{-1}\subseteq \Stab(g\pi(x))=gHg^{-1}$, for every $g\in G$. This implies  $gG_xg^{-1}\subseteq H$, for every $g\in N_G(H)$.  From this we get that there are infinitely many different conjugate $g_1G_xg_1^{-1}, g_2G_xg_2^{-1},\ldots$ in $H$, which contradicts that $H$ is finite. This shows that $\pi^{-1}Y_H\subseteq \{x\in X: G_x=\{1_G\}\}.$

Now suppose $(X,G,\mu)$ is essentially free. From Lemma \ref{constant-pre-images}, there exists finite $n\geq 1$ such that for $\pi_*\mu$-a.e.~$y\in Y$ we have $|\pi^{-1}\{y\}|=n$.
Then, applying Lemma \ref{finite-stabilizer}, we get $|G_y|\leq n$, for $\pi_*\mu$-a.e.~$y\in Y$.  The rest of the proof follows from Proposition \ref{old-corollary-00}.
\end{proof}

\begin{remark}
{\rm From  \cite[Theorem 3.3]{Yoo2018}, it is possible to deduce that if $\pi:(X,G)\to (Y,G)$ is a factor map which is finite-to-one $\nu$-a.e., where $\nu$ is an ergodic measure of $(Y,G)$, then there exists   a lift $\mu$ of $\nu$ on $(X,G)$. Thus, the second half of Proposition \ref{factor-topologically-free} can be stated as follows: Let $(X,G)$ and $(Y,G)$ be dynamical systems.  Suppose $(Y,G)$ is equipped with an ergodic measure $\nu$, and that there exists a factor map $\pi:X\to Y$,  finite-to-one $\nu$-a.e. If $(X,G,\mu)$ is essentially free with respect to a lift $\mu$ of $\nu$, then there exists an almost normal finite subgroup $H$ of $G$ such that $\nu(Y_H)=1$, thus the IRS of the system $(Y,G,\nu)$ is supported on its stabilizer URS, and the action $(Y,G,\nu)$ has no essential holonomy. If $Y$ is a metric space, then $(Y,G,\nu)$ is also locally quasi-analytic.}    
\end{remark}

\subsection{Factors of actions with finite almost normal stabilizers} 

\begin{prop}\label{factor-stabilizer} Let $H$ be  an almost normal finite subgroup of $G$. Let $(X,G)$ be a     dynamical system  such that there exists a factor map $\phi:X\to G/N_G(H)$. 
 Then there exist a   dynamical system $(Y,G)$,  and a factor map $\pi:X\to Y$, such that:
  \begin{enumerate}
  \item There is a factor map $\tilde{\phi}: Y\to G/N_G(H)$ such that $\tilde{\phi}\circ\pi=\phi$.
  \item $G_{y}= gHg^{-1}G_x$, for every $y\in \tilde{\phi}^{-1}\{gN_G(H)\}$, and any $x\in \pi^{-1}\{y\}$.
  \item $|\pi^{-1}\{y\}|=\frac{|H|}{|gHg^{-1}\cap G_x|}\leq |H|$, for every $y\in \tilde{\phi}^{-1}\{gN_G(H)\}$, and any $x\in \pi^{-1}\{y\}$.
  \end{enumerate}
  Furthermore, if $(Y',G)$ is another dynamical system for which there exists a factor map $\pi':X\to Y'$, such that
\begin{itemize}
\item[(i)] there exists a factor map $\tilde{\phi}':Y'\to G/N_G(H)$ such that $\tilde{\phi}'\circ\pi'=\phi$,
\item[(ii)] $gHg^{-1}\subseteq G_y$, for every $y\in \tilde{\phi}'^{-1}\{gN_G(H)\}$,
\end{itemize}
then there is a factor map $\varphi:Y\to Y'$ such that $\varphi\circ\pi=\pi'$.
\end{prop}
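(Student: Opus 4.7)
The plan is to construct $Y$ as a suitable quotient of $X$ by an equivalence relation that collapses, within each fiber of $\phi$, the orbits of the appropriate conjugate of $H$. Observe first that for every $g \in G$, since $gHg^{-1} \subseteq gN_G(H)g^{-1} = \Stab_G(gN_G(H))$, the subgroup $gHg^{-1}$ leaves the fiber $\phi^{-1}\{gN_G(H)\}$ invariant. This suggests defining $x_1 \sim x_2$ whenever there exists $g \in G$ with $\phi(x_1) = \phi(x_2) = gN_G(H)$ and $x_2 \in gHg^{-1} x_1$. Reflexivity and symmetry are immediate; transitivity uses that $gHg^{-1}$ is a group; and $G$-equivariance follows from the identity $k(gHg^{-1})k^{-1} = (kg)H(kg)^{-1}$.

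The key technical step is to verify that $\sim$ is a closed subset of $X\times X$. Since $\phi$ is continuous and $G/N_G(H)$ is finite and discrete (because $H$ is almost normal), along any convergent sequence $(x_n,x_n') \to (x,x')$ in $\sim$ the common value $\phi(x_n) = g_i N_G(H)$ is eventually constant in $i$. Each witness $h_n$ then lies in the \emph{finite} group $g_iHg_i^{-1}$, so a subsequence has $h_n$ equal to a fixed $h$, yielding $x' = hx$ and $(x,x') \in \sim$. Consequently $Y := X/\!\sim$ with the quotient topology is compact Hausdorff, the quotient map $\pi: X \to Y$ is a factor map for the induced $G$-action, and $\sim$ refines the fibers of $\phi$, so there is an induced factor map $\tilde{\phi}: Y \to G/N_G(H)$ with $\tilde{\phi}\circ \pi = \phi$. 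This establishes (1).

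For (2) and (3), I would compute directly. Fix $y = [x]$ with $\phi(x) = gN_G(H)$. Then $\pi^{-1}\{y\}$ is precisely the orbit $(gHg^{-1})x$, and the orbit-stabilizer formula gives $|\pi^{-1}\{y\}| = |gHg^{-1}|/|gHg^{-1}\cap G_x| = |H|/|gHg^{-1}\cap G_x|$, proving (3). For (2), $k \in G_y$ iff $kx \sim x$ iff $kx = hx$ for some $h \in gHg^{-1}$ iff $k \in (gHg^{-1})G_x$, giving $G_y = gHg^{-1}G_x$. Note that $gHg^{-1}G_x$ is genuinely a subgroup: from $k\phi(x) = \phi(x)$ one gets $G_x \subseteq gN_G(H)g^{-1} = N_G(gHg^{-1})$, so $G_x$ normalizes $gHg^{-1}$.

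Finally, for the universal property, given $(Y',G)$ satisfying (i) and (ii), define $\varphi: Y \to Y'$ by $\varphi([x]) := \pi'(x)$. The crucial verification is well-definedness, which is exactly where hypothesis (ii) is used: if $x_2 = hx_1$ with $h \in gHg^{-1}$ and $\phi(x_1) = gN_G(H)$, then $\pi'(x_1) \in \tilde{\phi}'^{-1}\{gN_G(H)\}$, so by (ii) $h$ fixes $\pi'(x_1)$, and $\pi'(x_2) = h\pi'(x_1) = \pi'(x_1)$. Continuity of $\varphi$ follows from the universal property of the quotient topology, and $G$-equivariance together with $\varphi\circ \pi = \pi'$ are immediate from the definition. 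The main obstacle is the closedness of $\sim$; everything downstream is essentially formal once the quotient is known to be Hausdorff.
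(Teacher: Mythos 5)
Your construction is essentially identical to the paper's: the same equivalence relation (collapsing $gHg^{-1}$-orbits within each fiber of $\phi$), the same verification that it is $G$-invariant and closed so that the quotient is a compact Hausdorff $G$-space, the same orbit--stabilizer computation yielding $G_y = gHg^{-1}G_x$ and $|\pi^{-1}\{y\}| = |H|/|gHg^{-1}\cap G_x|$, and the same use of hypothesis (ii) to get well-definedness of $\varphi$ in the universal property. The one point to tighten is closedness of the relation: $X$ is only assumed compact Hausdorff, not metrizable, so sequential closedness does not suffice in general; either rerun your argument with nets (it goes through verbatim, since $G/N_G(H)$ is finite discrete and each $gHg^{-1}$ is finite) or, as the paper does, show directly that the complement of the relation is open in $X\times X$ using the Hausdorff property of $X$, the finiteness of $H$, and the continuity of the action.
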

\begin{proof}
Let $g_1,\ldots, g_k\in G$ be representing elements of each class in $G/N_G(H)$. For every $1\leq i\leq k$, we set $X_i=\phi^{-1}\{g_iN_G(H)\}$. 
Let us define the following relation on $X$: $x_1\sim x_2$ if and only if there exist $1\leq i\leq k$ and  $h\in H$  such that $x_1,x_2\in X_i$ and $x_1=g_ihg_i^{-1}x_2$. This is an equivalence relation, with equivalence classes contained in $X_i$, $1 \leq i \leq k$.

\emph{Claim 2}. The set $R=\{(x_1,x_2): x_1\sim x_2, x_1,x_2\in X\}$ is $G$-invariant and closed in $X\times X$. 

\begin{proof}[Proof of Claim 2]
Indeed, if $(x_1,x_2)\in R$ then there exist $1\leq i\leq k$ and $h\in H$ such that $x_1=g_ihg_i^{-1}x_2$. Let $p\in G$, $1\leq j\leq k$ and $l\in N_G(H)$ such that $pg_i=g_jl$.  Then 
$$px_1= pg_ihg_i^{-1}x_2=g_jlhg_i^{-1}x_2=g_j(lhl^{-1})lg_i^{-1}x_2=g_jh'g_j^{-1}px_2,$$
where $lhl^{-1}=h'\in H$. This shows that $(px_1,px_2)\in R$.  On the other hand, suppose that $(x,y)\in (X\times X) \setminus R$. If $x\in X_i$ and $y\in X_j$ with $i\neq j$, since $X_i$ and $X_j$ are clopen, there exists neighborhoods $V_x\subseteq X_i$ and $V_y\subseteq X_j$ of $x$ and $y$, respectively. We have that $V_x\times V_y$ is a neighborhood of $(x,y)$ contained in  $(X\times X)\setminus R$. If $x$ and $y$ are in the same $X_i$, since $X$ is Hausdorff, there exist an open neighborhood $V$ of $x$, and open neighborhoods $V_h$ of $g_ihg_i^{-1}y$, such that $V\cap V_h=\emptyset$, for every $h\in H$. Since $H$ is finite, the continuity of the action implies there exists an open neighborhood $U$ of $y$ such that if $z\in U$, then $g_ihg_i^{-1}z\in V_h$, for every $h\in H$. The set $V\times U$ is an open neighborhood of $(x,y)$ contained in $(X\times X)\setminus R$. This shows that $R$ is closed. 
\end{proof}

Since $R$ is a closed $G$-invariant equivalence relation, from \cite[p.~23]{Auslander1988} we get that $Y=X/\sim$ is compact and Hausdorff, and the projection  $\pi: X\to Y$ is a factor map, where the action of $G$ on $Y$ is given by $g\pi(x)=\pi(gx)$, for every $x\in X$. The definition of $\sim$ implies that $\tilde{\phi}: Y\to G/N_G(H)$ given by $\tilde{\phi}(y)=\phi(x)$, where $x$ is any element in $\pi^{-1}\{y\}$, is a well defined factor map that verifies (1). 

Let $y\in Y$ and $g\in G$ be such that $y\in \tilde{\phi}^{-1}\{gN_G(H)\}$. This yields $\pi^{-1}\{y\}\subseteq \phi^{-1}\{gN_G(H)\}$. From this we get that $G_x$ is a subgroup of $gN_G(H)g^{-1}$, for every $x\in \pi^{-1}\{y\}$. Since $gHg^{-1}$ is normal in $gN_G(H)g^{-1}$, we obtain that $gHg^{-1}G_x$ is a subgroup of $gN_G(H)g^{-1}$, and so of $G$, for every $x\in \pi^{-1}\{y\}$. 
If $l\in G_y$, then for $x\in \pi^{-1}\{y\}$, we have $\pi(lx)=\pi(x)$. This implies that $lx\sim x$, from which we deduce  there exists $h\in H$ such that $ghg^{-1}lx=x$. From this follows that $ghg^{-1}l\in G_x$, and then $l \in gHg^{-1}G_x$. This shows that $G_y\subseteq gHg^{-1}G_x$.
Conversely, if $h\in H$ and $l\in G_x$, then $ghg^{-1}ly=ghg^{-1}l\pi(x)=ghg^{-1}\pi(lx)=ghg^{-1}\pi(x)=\pi(ghg^{-1}x)=\pi(x)=y$. This shows that $G_{y}=gHg^{-1}G_x$, and so   
$$gHg^{-1}G_x=G_y =gHg^{-1}G_{x'}, \mbox{ for every } x,x'\in \pi^{-1}\{y\}.$$

From the definition of $\sim$, the cardinality of $\pi^{-1}\{y\}$, is equal to
$$|gHg^{-1}/(gHg^{-1}\cap G_x)|=\frac{|H|}{|gHg^{-1}\cap G_x|}, \mbox{ for every } x\in \pi^{-1}\{y\}.$$
Thus, statements (2) and (3) have been established.

Let $(Y',G)$ be a dynamical system such that there exists a factor map $\pi':X\to Y'$, such that (i) and (ii) in the hypothesis of the proposition are satisfied. 
If $(x_1,x_2)\in R\subseteq X\times X$, then there exists $g\in G$  such that $x_1$ and $x_2$ are in $\phi^{-1}\{gN_G(H)\}$, and $x_1=ghg^{-1}x_2$, for some $h\in H$. Then by (i) we have $\pi'(x_1), \pi'(x_2)\in \tilde{\phi}'^{-1}\{gN_G(H)\}$, and   $\pi'(x_1)=ghg^{-1}\pi'(x_2)=\pi'(x_2)$,  thanks to point (ii). This shows that 
$$R \subseteq  R'= \{(x_1,x_2)\in X\times X: \pi'(x_1)=\pi'(x_2)\},$$ which implies there exists a factor map $\varphi:Y\to Y'$ such that $\pi'=\varphi\circ\pi$.
 \end{proof}

\begin{remark}\label{remark_allosteric}
{\rm  Observe that if $H$ is a finite normal subgroup of $G$, then $H$ is finite almost normal, and since every   system $(X, G)$ is an extension of the trivial group $G/N_G(H)$, then every  system $(X,G)$ satisfies the hypothesis of Proposition \ref{factor-stabilizer} with respect to $H$.}    
\end{remark}

\begin{lemma}\label{key}
 If $(X,G)$ is a minimal topologically free dynamical system, and $H$ is an almost normal finite subgroup of $G$, then $G_x\cap gHg^{-1}=\{1_G\}$, for every $x\in X$ and $g\in G$.     
\end{lemma}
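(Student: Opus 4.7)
The plan is to argue by contradiction, first reducing to the case $g = 1_G$. Indeed, $G_x \cap g H g^{-1} = \{1_G\}$ is equivalent to $g^{-1} G_x g \cap H = G_{g^{-1}x} \cap H = \{1_G\}$, so it suffices to show that no nontrivial element of $H$ fixes any point of $X$.

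Fix $h \in H \setminus \{1_G\}$ and write $C_h = \{\gamma h \gamma^{-1} : \gamma \in G\}$ for the $G$-conjugacy class of $h$. The first key step is to verify that $C_h$ is finite. Since $H$ is almost normal, it has only finitely many conjugate subgroups $g_1 H g_1^{-1}, \ldots, g_n H g_n^{-1}$, each of cardinality $|H| < \infty$. For any $\gamma \in G$, the element $\gamma h \gamma^{-1}$ lies in $\gamma H \gamma^{-1} = g_i H g_i^{-1}$ for some $i$, whence $C_h \subseteq \bigcup_{i=1}^n g_i H g_i^{-1}$ is a finite set.

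Now form $F_h = \bigcup_{h' \in C_h} \{y \in X : h' y = y\}$, a finite union of closed sets and therefore closed in $X$. The set $F_h$ is $G$-invariant by construction, since if $h' y = y$ and $g \in G$, then $(g h' g^{-1})(g y) = g y$ with $g h' g^{-1} \in C_h$. If the lemma were false, the reduction above would furnish some $x \in F_h$, making $F_h$ a nonempty closed invariant set; minimality of $(X,G)$ would then force $F_h = X$. But this means every $y \in X$ is fixed by some nontrivial element of $G$, so no point has trivial stabilizer, contradicting topological freeness (which requires the points with trivial stabilizer to form a residual, hence dense, subset of $X$).

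The main subtlety lies in the finiteness of $C_h$, which uses both hypotheses on $H$ in an essential way: the finiteness of $H$ ensures each conjugate subgroup $g_i H g_i^{-1}$ is a finite set of elements, and the almost normality of $H$ bounds the number of such subgroups. Dropping either condition would allow $C_h$ to be infinite, in which case $F_h$ need not be closed and the minimality argument collapses.
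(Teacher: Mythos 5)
Your proof is correct and takes essentially the same route as the paper's: both arguments rest on the finiteness of the union of conjugates $\bigcup_{g\in G} gHg^{-1}$ (from $H$ finite and almost normal), and then use minimality and continuity of the action to force a nontrivial element of that set to fix a point with trivial stabilizer, contradicting topological freeness. Your packaging via the closed $G$-invariant set $F_h$ is a clean reformulation of the paper's explicit pigeonhole-and-limit argument at a free point $y$.
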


\begin{proof}
Suppose there exist $x\in X$ and $g\in G$,  such that there exists  $l\in (G_x\cap gHg^{-1})\setminus\{1_G\}$. This implies that the stabilizer of every point in the orbit of $x$ contains a conjugate of $l$, i.e.~a non-trivial element in $C=\bigcup_{g\in G}gHg^{-1}$. Since $(X,G)$ is topologically free, there exists $y\in X$ with trivial stabilizer, and the minimality of $(X,G)$ implies that for every neighborhood $U$ of $y$ there is some $g_U\in G$ such that $g_Ux\in U$. Since $C$ is finite, this implies there exists $g\in G$ such that $glg^{-1}\in G_{g_Ux}$, for every neighborhood $U$ of $x$. From the continuity of the action follows that $glg^{-1}y=y$, which is a contradiction. 
\end{proof}

We now prove Theorem \ref{characterization-factor}. We restate it here for the convenience of the reader.

\begin{theo}\label{characterization-factor-1}
Let $(X,G)$ be a minimal dynamical system, and let $H$ be a finite subgroup of $G$. Then,
\begin{enumerate}
\item If $(X,G)$ is topologically free, and $H$ is  almost normal,  then the following sentences are equivalent:

\medskip

\begin{itemize}
\item[(1.1)] There exists a factor map $\phi:X\to G/N_G(H)$.

\item[(1.2)] There exist a minimal dynamical system $(Y,G)$,  and a $|H|$-to-one factor map $\pi:X\to Y$ such that 
$$
\pi^{-1}Y_H=\{x\in X: G_x=\{1_G\}\},
$$
and ${\rm Conj}(H)$ is the stabilizer URS of $(Y,G)$.
\end{itemize}

\item If $(X,G,\mu)$ is essentially free with respect to an ergodic measure $\mu$,  then the following sentences are equivalent:

\medskip

\begin{itemize}
\item[(2.1)] $H$ is almost normal and there exists a factor map $\phi:X\to G/N_G(H)$.

\item[(2.2)] There exist a minimal dynamical system $(Y,G)$,  and a $|H|$-to-one factor map $\pi:X\to Y$ such that 
$$
\pi^{-1}Y_H=\{x\in X: G_x=\{1_G\}\}. $$

\item[(2.3)] There exist a minimal  dynamical system $(Y,G,\pi_*\mu)$ and a $|H|$-to-one $\pi_*\mu$-a.e.~factor map $\pi:X\to Y$ such that $\pi_*\mu(Y_H)=1$. 
\end{itemize}
\end{enumerate}
 \end{theo}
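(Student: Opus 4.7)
The strategy in both parts is to feed the factor map $\phi: X \to G/N_G(H)$ into Proposition \ref{factor-stabilizer}, and then use Lemma \ref{key} to control the structure of the resulting fibers under topological freeness.

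For (1.1) $\Rightarrow$ (1.2), applying Proposition \ref{factor-stabilizer} to the given $\phi$ produces $(Y,G)$ and a factor $\pi: X \to Y$ with $G_{\pi(x)} = gHg^{-1} G_x$ whenever $\pi(x) \in \tilde\phi^{-1}\{gN_G(H)\}$, and with $|\pi^{-1}\{y\}| = |H|/|gHg^{-1} \cap G_x|$. Topological freeness lets me invoke Lemma \ref{key} to obtain $G_x \cap gHg^{-1} = \{1_G\}$ for every $x$ and $g$, which immediately upgrades $\pi$ to a $|H|$-to-one map, and minimality of $(Y,G)$ is inherited from that of $(X,G)$. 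To identify $\pi^{-1}Y_H$ with $\{x : G_x = \{1_G\}\}$: if $G_x = \{1_G\}$ then $G_{\pi(x)} = gHg^{-1}$ and thus $\pi(x) \in Y_H$; conversely, if $\pi(x) \in Y_H$, then $G_{\pi(x)}$ is conjugate to $H$ with cardinality $|H|$, while $gHg^{-1} \subseteq G_{\pi(x)}$ also has cardinality $|H|$, so $G_{\pi(x)} = gHg^{-1}$, forcing $G_x \subseteq gHg^{-1}$ and hence $G_x = \{1_G\}$ by Lemma \ref{key}. Since topological freeness guarantees $Y_H \neq \emptyset$, Theorem \ref{theo-1} identifies $\Conj(H)$ as the stabilizer URS of $(Y,G)$.

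For the converse (1.2) $\Rightarrow$ (1.1), I would note that $(Y,G)$ is minimal and $Y_H \neq \emptyset$ (it contains the image under $\pi$ of any point with trivial stabilizer, which exists by topological freeness), so Corollary \ref{converse-minimal} furnishes a factor map $\tilde\phi: Y \to G/N_G(H)$, and the composition $\tilde\phi \circ \pi$ gives the required $\phi$. Part (2) then reduces to Part (1) together with measure-theoretic bookkeeping. The implication (2.1) $\Rightarrow$ (2.2) is immediate, since a minimal essentially free system is topologically free and Part (1) applies. For (2.2) $\Rightarrow$ (2.3), essential freeness gives $\mu(\pi^{-1} Y_H) = 1$, hence $\pi_*\mu(Y_H) = 1$. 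For (2.3) $\Rightarrow$ (2.1), $Y_H$ is non-empty, so Corollary \ref{converse-minimal} again supplies a factor map $Y \to G/N_G(H)$ which, composed with $\pi$, yields $\phi$ as in (2.1); the trailing claims of (2.3) follow from Theorem \ref{theo-1} (for the URS and local quasi-analyticity) together with the observation that $\pi_*\mu(Y_H) = 1$ forces $\Stab_*(\pi_*\mu)$ to be concentrated on $\Conj(H)$.

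The main obstacle is the cardinality argument in (1.1) $\Rightarrow$ (1.2) establishing $\pi^{-1} Y_H \subseteq \{x : G_x = \{1_G\}\}$: one must combine $gHg^{-1} \subseteq G_{\pi(x)}$ from Proposition \ref{factor-stabilizer}(2) with $|G_{\pi(x)}| = |H|$ coming from $\pi(x) \in Y_H$ to pin down $G_{\pi(x)} = gHg^{-1}$, before Lemma \ref{key} can be invoked on the resulting inclusion $G_x \subseteq gHg^{-1}$ to conclude $G_x = \{1_G\}$.
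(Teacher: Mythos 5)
Your proposal follows essentially the same route as the paper: Proposition \ref{factor-stabilizer} applied to $\phi$ produces $(Y,G)$ and $\pi$, Lemma \ref{key} kills the intersections $G_x\cap gHg^{-1}$ to make $\pi$ exactly $|H|$-to-one and to identify $\pi^{-1}Y_H$ with the set of points with trivial stabilizer, and Corollary \ref{converse-minimal} supplies the factor map onto $G/N_G(H)$ in the reverse directions. Your cardinality argument pinning down $G_{\pi(x)}=gHg^{-1}$ for $\pi(x)\in Y_H$ is a correct (and slightly more explicit) version of the paper's chain of equalities, and the measure-theoretic steps in part (2) match the paper's.

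There is one genuine, though small, gap: in (2.3) $\Rightarrow$ (2.1) you invoke Corollary \ref{converse-minimal} directly from ``$Y_H\neq\emptyset$'', but that corollary requires $H$ to be almost normal, and in (2.3) --- unlike in (1.2) and (2.2), where part (1) of the theorem already has almost normality as a standing hypothesis --- almost normality of $H$ is neither assumed nor yet established; indeed it is part of the conclusion (2.1) that you must prove. The mere existence of a point of $Y_H$ does not suffice: Theorem \ref{theo-1} only tells you that \emph{some} finite almost normal subgroup $H'$ gives the stabilizer URS, and a point of $Y_H$ need not have trivial holonomy, so you cannot conclude $H\in\Conj(H')$ from topology alone. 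The paper closes this by first applying Lemma \ref{finite-conjugacy-classes}: since $\pi_*\mu$ is an invariant measure with $\pi_*\mu(Y_H)=1>0$, the subgroup $H$ is almost normal, and only then is Corollary \ref{converse-minimal} applicable. With that one line inserted, your argument is complete.
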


\begin{remark}
{\rm 
    If (1.2) or, equivalently, (2.2) of Theorem \ref{characterization-factor-1} hold, then ${\rm Conj}(H)$ is the stabilizer URS of $(Y,G)$ by Theorem \ref{theo-1}, (3). If, in addition, $Y$ is a metric space, then $(Y,G)$ is locally quasi-analytic by Theorem \ref{theo-LQA1}.

    If (1.3) holds, then ${\rm Conj}(H)$ is the stabilizer URS of $(Y,G,\pi_*\mu)$ by the argument in the previous paragraph, and its IRS is supported on its stabilizer URS. In particular, $(Y,G,\pi_*\mu)$ has no essential holonomy.
    }
\end{remark}
 
\begin{proof}[Proof of Theorem \ref{characterization-factor-1}]
Suppose that $(X,G)$ is minimal and topologically free, and $H$ is almost normal. By Lemma \ref{key}, we have $G_x\cap gHg^{-1}=\{1_G\}$, for every $x\in X$. Then, if we assume (1.1),  
Proposition \ref{factor-stabilizer} implies there exists a minimal dynamical system $(Y,G)$, and a 
finite-to-one factor map $\pi:X\to Y$ such that for every $g\in G$,
\begin{eqnarray*}
\pi^{-1}\{y\in Y: G_y=gHg^{-1}\} & = &\{x\in X: G_x\subseteq gHg^{-1}\}\\
                                    & = &\{x\in X: G_x=\{1_G\}\}.
\end{eqnarray*}
Thus, we have
$$
\pi^{-1}Y_H=\{x\in X: G_x=\{1_G\}\}.
$$
 We claim that $\pi$ is $|H|$-to-$1$. Indeed, by Proposition \ref{factor-stabilizer} $|\pi^{-1}(\pi(x))| = \frac{|H|}{|gHg^{-1} \cap G_{\pi(x)}|}$, for an appropriate $g \in G$, so this is clear for $x \in X$ with $G_x = \{1_G\}$. Suppose $x \in X$ is such that $G_x \ne \{1_G\}$, and let $K = gHg^{-1} \cap G_x$ for some $g \in G$. Since $\bigcup_{g \in G}gHg^{-1}$ is a finite set, $\bigcup_{g \in G} gKg^{-1}$ is a finite set, and so $K$ has a finite number of conjugates. The orbit of $x$ is dense in $X$, and so accumulates at a point $y$ with $G_y = \{1_G\}$. Since $K$ has a finite number of conjugates, we may choose a sequence of points $\{x_i\}_{i\geq 1} \subset O(x)$, converging to $y$, such that $G_{x_i} \supset gKg^{-1}$ for a fixed $g \in G$. Then by continuity of the action $gKg^{-1} \subset G_y =\{1_G\}$, which implies that $K = \{1_G\}$, and $\pi$ is $|H|$-to-$1$.

Assume that we have (1.2). Since $(Y,G)$ is minimal, $H$ is almost normal, and $Y_H\neq \emptyset$, Corollary \ref{converse-minimal} implies there exists a factor map $\varphi: Y\to G/N_G(H)$. The composition $\phi=\varphi\circ \pi$ is the map described in (1.1). 

Suppose that $(X,G,\mu)$ is minimal and essentially free  with respect to an ergodic measure $\mu$. If (2.1) holds, and since essentially free implies topologically free, we get (2.2).  If (2.2) holds, then it is direct that (2.3) holds. If we assume (2.3), then Lemma \ref{finite-conjugacy-classes} implies that $H$ is almost normal. Then, by Corollary \ref{converse-minimal}, there exists a factor map $\varphi: Y\to G/N_G(H)$. The composition $\phi=\varphi\circ \pi$ is the map in (2.1). 
\end{proof}

Recall that a group $G$ is allosteric if there exists a minimal topologically free action of $G$ on a compact Hausdorff space which is not essentially free w.r.t.~an ergodic measure. The next corollary shows that the property of being allosteric is preserved under quotients by finite normal subgroups.

\begin{coro}\label{allosetric-sufficient} The following sentences are equivalent:
\begin{enumerate}
\item The group $G$ is allosteric  
\item For every finite normal subgroup $H$ of $G$, the group  $G/H$ is allosteric.  \end{enumerate}
\end{coro}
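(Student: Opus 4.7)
The implication (2)$\Rightarrow$(1) is immediate by specializing to $H=\{1_G\}$, so that $G=G/\{1_G\}$ is allosteric. For (1)$\Rightarrow$(2), fix a finite normal subgroup $H\lhd G$ and an allosteric witness $(X,G,\mu)$ for $G$ (minimal and topologically free, with ergodic $\mu$ that is not essentially free). My plan is to transfer allostery to $G/H$ via the orbit-space quotient $\pi\colon X\to Y:=X/H$: since $H$ is finite, $Y$ is compact Hausdorff and $\pi$ is both open and closed; since $H\lhd G$, the $G$-action descends to $Y$ and factors through a continuous $(G/H)$-action. Set $\nu:=\pi_*\mu$ and aim to show that $(Y,G/H,\nu)$ is minimal, topologically free, with ergodic $\nu$, and not essentially free.

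I would verify these in order. Minimality is inherited through the equivariant surjection $\pi$, and ergodicity of $\nu$ for $G/H$ follows from ergodicity of $\mu$ for $G$ since preimages of $(G/H)$-invariant Borel sets are $G$-invariant. For topological freeness, $\pi$ sends nowhere dense closed sets to nowhere dense closed sets: if $F\subset X$ is closed with empty interior, then $\pi^{-1}(\pi(F))=HF$ is a finite union of translates of $F$, still nowhere dense, so $\pi(F)$ has empty interior. Applied to $X\setminus R$, where $R=\{x\in X:G_x=\{1_G\}\}$ is the $H$-invariant residual set of trivially stabilized points, this shows $\pi(R)=Y\setminus\pi(X\setminus R)$ is residual in $Y$; for $x\in R$ one has $(G/H)_{\pi(x)}=HG_x/H=\{1_{G/H}\}$, giving topological freeness. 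For the measure-theoretic statement, $(G/H)_{\pi(x)}=HG_x/H$ is trivial iff $G_x\subseteq H$, and the $G$-invariant set $A:=\{x\in X:G_x\subseteq H\}$ (using that $H$ is normal) has $\mu(A)\in\{0,1\}$ by ergodicity. In the generic alternative $\mu(A)=0$, the action $(Y,G/H,\nu)$ has non-trivial $(G/H)$-stabilizer $\nu$-a.e.\ and witnesses allostery of $G/H$.

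The main obstacle is the remaining ergodic alternative $\mu(A)=1$, where the quotient $(Y,G/H,\nu)$ is essentially free and the direct argument collapses. Here I would invoke Proposition~\ref{old-corollary-00} to obtain a non-trivial almost normal finite subgroup $H'\subseteq H$ with $\mu(X_{H'})=1$, and set $N$ to be its normal closure in $G$; since all conjugates of $H'$ lie in the finite normal group $H$, the group $N$ is a finite normal subgroup of $G$ contained in $H$, and $H/N$ is then a finite normal subgroup of $G/N$. My plan is to reduce inductively via the tower $G\to G/N\to G/H=(G/N)/(H/N)$: first construct an allosteric witness for $G/N$ by exploiting the partition structure of Theorem~\ref{converse} together with the finite-index inclusion $N_G(H')/N\hookrightarrow G/N$ and Joseph's commensurability invariance of allostery \cite[Theorem~2.9]{Joseph2024}, and then apply (1)$\Rightarrow$(2) to the finite normal pair $(G/N,H/N)$ to conclude that $(G/N)/(H/N)=G/H$ is allosteric. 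Fabricating the required allosteric witness for $G/N$ in this measure-trivial case, starting from the given one for $G$, is the technical crux of the argument.
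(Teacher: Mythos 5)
Your construction of the quotient $(Y,G/H,\nu)$ with $Y=X/H$ is exactly the paper's (it is the special case of Proposition \ref{factor-stabilizer} when $H$ is normal, so that $G/N_G(H)$ is trivial), and your verification of minimality, ergodicity and topological freeness is fine. The problem is the case split at the end. The ``remaining ergodic alternative $\mu(A)=1$'', which you flag as the unresolved technical crux, is in fact vacuous, and the argument you sketch for it is not a proof: you explicitly leave the construction of the allosteric witness for $G/N$ undone, and the natural candidate $X/N$ would be essentially free in that regime (since $G_x\in\Conj(H')\subseteq N$ a.e.\ forces $NG_x=N$ a.e.), so the proposed induction does not get off the ground. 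As written, the proof is therefore incomplete.

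The missing ingredient is Lemma \ref{key}: for a \emph{minimal topologically free} system $(X,G)$ and a finite almost normal subgroup $H$, one has $G_x\cap gHg^{-1}=\{1_G\}$ for every $x\in X$ and $g\in G$. (The point is that a non-trivial $l\in G_x\cap H$ would force every stabilizer along the dense orbit of $x$ to contain a non-trivial element of the finite set $\bigcup_g gHg^{-1}$; a pigeonhole and continuity argument then contradicts the existence of a point with trivial stabilizer.) Consequently $A=\{x: G_x\subseteq H\}=\{x: G_x=\{1_G\}\}$, which has $\mu$-measure $0$ precisely because the witness is not essentially free. So $\mu(A)=0$ always, your ``generic alternative'' is the only one, and the quotient $(Y,G/H,\nu)$ is automatically a witness of allostery for $G/H$. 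Replacing the second half of your argument by this observation (or by a direct appeal to Theorem \ref{characterization-factor}, which packages it) closes the gap and recovers the paper's proof.
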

\begin{proof}
Observe that (2) implies (1) because it is enough to take $H=\{1_G\}$.

Suppose that $G$ is allosteric. Then there exists a minimal system $(X,G,\mu)$ equipped with an ergodic measure $\mu$, such that $(X,G,\mu)$ is topologically free but not essentially free with respect to $\mu$. By Remark \ref{remark_allosteric}, we can apply Theorem \ref{characterization-factor} to $(X,G,\mu)$ and $H$. Thus, there exists a minimal system $(Y,G)$ and a $|H|$-to-one factor map $\pi:X\to Y$ such that  
$$
\pi^{-1}Y_H=\{x\in X: G_x=\{1_G\}\},
$$
which implies
$$
\pi_*\mu(Y_H)=\mu(\{x\in X: G_x=\{1_G\}\})=0.
$$
Note that $\pi_*\mu$ is an ergodic measure for the minimal dynamical system $(Y, G/H)$ given by
$$
gHy=gy \mbox{ for every } y\in Y, g\in G.
$$
Since
$$
\{y\in Y: (G/H)_{y}=\{1_{G/H}\}\}=\{y\in Y: G_y=H\},
$$
we deduce that $(Y,G/H,\pi_*\mu)$ is topologically free but not essentially free w.r.t.~$\pi_*\mu$. 
\end{proof}

 Recall that a dynamical system $(X,G,\mu)$ is \emph{weakly mixing}, if every non-empty open invariant subset $U \subset X \times X$, invariant with respect to the product action of $G$ on $X \times X$, is dense in $X \times X$ (see \cite[p.~131]{Auslander1988}). 

\begin{coro}\label{cor-weakly-mixing}
 Let $(X,G,\mu)$ be  a minimal dynamical system that is essentially free with res\-pect to some ergodic measure $\mu$.  If $(X,G,\mu)$ is weakly mixing, then every effective system $(Y,G,\pi_*\mu)$ for which there exists a finite-to-one $\pi_*\mu$-a.e.~factor map $\pi:X\to Y$,  is essentially free with respect to $\pi_*\mu$.
\end{coro}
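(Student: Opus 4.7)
The plan is to apply Proposition \ref{factor-topologically-free} to reduce the problem to showing that a certain almost normal finite subgroup must be trivial, and then to derive this triviality from weak mixing (ruling out proper finite factors) and from effectiveness (upgrading measure-theoretic triviality to pointwise triviality).

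Since $(X,G,\mu)$ is essentially free and $\pi$ is finite-to-one $\pi_*\mu$-a.e., the set of free points in $X$ and the preimage of the finite-fibre set each have full $\mu$-measure, so their intersection is non-empty and Proposition \ref{factor-topologically-free} yields an almost normal finite subgroup $H\leq G$ with $\pi_*\mu(Y_H)=1$. Essential freeness of $(Y,G,\pi_*\mu)$ is equivalent to $H=\{1_G\}$, since otherwise a full-measure set of points would have non-trivial stabilizer conjugate to $H$; it therefore suffices to prove the latter.

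To show $H$ is normal in $G$, observe that $(Y,G)$ is minimal as a continuous equivariant image of the minimal system $(X,G)$, and $Y_H\neq\emptyset$ since it has full measure. Corollary \ref{converse-minimal} then provides a $G$-equivariant factor map $\varphi:Y\to G/N_G(H)$. The composition $\varphi\circ\pi$ realizes $(X,G,\mu)$ as an extension of the finite transitive $G$-set $G/N_G(H)$ equipped with its uniform measure; since a weakly mixing system admits no non-trivial finite measure-theoretic factors, this forces $[G:N_G(H)]=1$, i.e.\ $H$ is normal.

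With $H$ normal, every $h\in H$ fixes each $y\in Y_H$, so the closed set $\mathrm{Fix}(h)=\{y\in Y:hy=y\}$ contains $Y_H$ and thus has full $\pi_*\mu$-measure. Minimality of $(Y,G)$ forces $\pi_*\mu$ to have full support (its support is a closed non-empty $G$-invariant subset of $Y$), so a closed set of full $\pi_*\mu$-measure must be all of $Y$, and $h$ acts as the identity on $Y$. Effectiveness of $(Y,G,\pi_*\mu)$ then gives $h=1_G$, hence $H=\{1_G\}$ and $(Y,G,\pi_*\mu)$ is essentially free. The main obstacle I anticipate is the weak-mixing step: turning ``non-trivial finite factor of $(X,G,\mu)$'' into a genuine contradiction requires invoking the standard fact that a non-trivial transitive finite factor of cardinality $n$ produces a non-trivial $n$-dimensional $G$-invariant subspace of $L^2(X,\mu)$, which is incompatible with the spectral characterisation of weak mixing.
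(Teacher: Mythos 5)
Your proposal is correct and follows essentially the same route as the paper's proof: invoke Proposition \ref{factor-topologically-free} to obtain the almost normal finite subgroup $H$ with $\pi_*\mu(Y_H)=1$, produce the finite factor $G/N_G(H)$ of $(X,G)$, use weak mixing to force $N_G(H)=G$, and then use density of $Y_H$ (full support of the invariant measure) plus effectiveness to conclude $H=\{1_G\}$. The only cosmetic differences are that you cite Corollary \ref{converse-minimal} directly where the paper routes through Theorem \ref{characterization-factor}, and you justify the weak-mixing step via the $L^2$ spectral characterisation where the paper cites Auslander's theorem on equicontinuous factors; both are valid.
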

\begin{proof}
 If $(Y,G)$ is a system such that  there exists a factor map $\pi:X\to Y$, then $(Y,G)$ is minimal. If in addition, we have $|\pi^{-1}\{y\}|<\infty$ for $\pi_*\mu$-a.e.~$y\in Y$, then Proposition \ref{factor-topologically-free} implies  there exists an almost normal finite subgroup $H$ of $G$ such that   $\pi_*\mu(Y_H)=1$. From Theorem \ref{characterization-factor} we get that the equicontinuous system given by the left multiplication action of $G$  on $G/N_G(H)$ is a factor of $(X,G)$. If $(X,G,\mu)$ is weakly mixing, then $G/N_G(H)$ is trivial (see \cite[Theorem 13, p.~133]{Auslander1988}), which implies that $H$ is normal and then it is in the stabilizer of every element of $Y$. Thus, if the action of $G$ on $Y$ is effective, then $H=\{1_G\}$. This shows that $(Y,G,\pi_*\mu)$ is essentially free.   \end{proof}

\subsection{Examples}

Let $G$ be a countable residually finite group. A decreasing sequence $(\Gamma_n)_{n\in\NN}$ of finite index subgroups of $G$ is called {\it Farber} (see \cite{KKN17}) if the $G$-odometer (see Section \ref{odom-action})  associated to $(\Gamma_n)_{n\in\NN}$ is essentially free with respect to its unique invariant probability measure $\mu$. For example, a decreasing sequence of normal finite index subgroups of $G$ with trivial intersection is Farber.

The next example is a consequence of Theorem \ref{characterization-factor}. 
\begin{example}\label{Odometer}
{\rm Let $G$  be a  residually finite group, and  let $(\Gamma_n)_{n\in\NN}$ be a  Farber sequence of subgroups of $G$.  Let $H$ be an almost normal finite subgroup of $G$. Let $(X,G)$ be the $G$-odometer associated to $(\Gamma_n)_{n\in\NN}$, and  let $\mu$ be  its unique ergodic measure. The following statements are equivalent:
\begin{enumerate}
\item There exists $n\in\NN$ such that $\Gamma_n\subseteq N_G(H)$.

\item There exist a minimal dynamical system $(Y,G)$, and a $|H|$-to-one factor map $\pi:X\to Y$, such that $\pi_*\mu(Y_H)=1$. 

\item There exist a minimal dynamical system $(Y,G)$,  and a finite-to-one $\pi_*\mu$-a.e.~factor map $\pi:X\to Y$ such that $\pi_*\mu(Y_H)=1$.
\end{enumerate}
    Here the equivalence of (2) and (3) is immediate. Assume (3), then as in the proof of Theorem \ref{characterization-factor} there exists a factor map $\varphi: Y \to G/N_G(H)$, and the composition $\phi = \varphi \circ \pi: X \to G/N_G(H)$ is a factor map for $(X,G)$. Then the collection $\{\phi^{-1}(gN_G(H)): g \in G \}$ is a finite clopen partition of $X$ invariant under the action of $G$. The cylinder sets $U_{g,n}$ of the odometer action, see Section \ref{odom-action}, provide a basis for the topology on $X$, therefore, there exists $g \in G$ and $n \geq 1$ such that $U_{g,n} \subset \pi^{-1}(N_G(H))$. Then $g \Gamma_ng^{-1}$ stabilizes $U_{g,n}$, and thus $g \Gamma_n g^{-1} \subset N_G(H)$. Replacing $(\Gamma_n)_{n \in \mathbb N}$ by the chain of conjugate subgroups $(g\Gamma_ng^{-1})_{n \in \mathbb N}$ we obtain (1). Conversely, (1) implies the map $G/\Gamma_m\to G/N_G(H)$ given by $g\Gamma_m\to gN_G(H)$ is well defined, for every $m\geq n$, from which follows there exists a factor map $\phi:X\to G/N_G(H)$. Thus, using (2) of Theorem \ref{characterization-factor} we get (3). 
}
 \end{example}

In practice it may be difficult to find a group $G$ which has an almost normal finite subgroup. We describe one way to obtain such a group below in Example \ref{ex-farber}.

\begin{example}\label{ex-farber}
{\rm 
Let $\Gamma$ be a countable residually finite group. Consider $G=\Gamma\times A_p$, where $A_p$ is the alternating subgroup of $S_p$, the symmetric group on $p$ elements.   Let $K$ be a non-trivial proper subgroup of $A_p$. If $p\geq 5$,  then $A_p$ is simple, and so $1<[A_p:N_{A_p}(K)]=m<\infty$.
Let $H=\{1_{\Gamma}\}\times K$. We have $N_G(H)=\Gamma\times N_{A_p}(K)$ and $[G: N_G(H)]=m$. If $(\Gamma_n)_{n\in\NN}$ is a Farber sequence of subgroups of $\Gamma$, then $(\Gamma_n\times \{1_{A_p}\})_{n\in\NN}$ is a Farber sequence of $G$. The $G$-odometer $(X,G,\mu)$ associated to $(\Gamma_n\times \{1_{A_p}\})_{n\in\NN}$ satisfies condition (1) of Example \ref{Odometer}, therefore, there exist a minimal dynamical system $(Y,G)$ and a $|H|$-to-one factor map $\pi:X\to Y$ such that $G_y\in{\rm Conj}(H)$ for  $\pi^*\mu$-a.e.~$y\in Y$.
}
\end{example}

\section{From factors to essentially free  finite-to-one extensions}\label{sec-extensions}

In this section, we explore under which conditions minimal systems with an almost normal finite stabilizer are finite-to-one factors of topologically and essentially free systems.

Recall that the residual subgroup $R(G)$ of $G$ is the intersection of all the finite index subgroups of $G$. This is a normal subgroup of $G$ (see \cite{CC10}).

\begin{prop}\label{theo-extension1}
  Suppose that there exists an almost normal finite subgroup $H$ of $G$ such that $H\cap R(G)=\{1_G\}$. Let $(Y,G)$ be a dynamical system such that $Y_H\neq\emptyset$. Then there exist a dynamical system $(X,G)$, a factor map $\pi:X\to Y$, and a constant $m\geq 1$ such that
  $$
  \pi^{-1}Y_H\subseteq\{x\in X: G_x=\{1_G\}\},$$  
  and
  $$
 |\pi^{-1}\{y\}|\leq m, \mbox{ for all } y\in Y_H.
 $$
Moreover,
\begin{enumerate}
\item If $(Y,G)$ is minimal, then $(X,G)$ is minimal and topologically free.

\item If there exists an ergodic measure $\mu$ of $(Y,G)$ such that $\mu(Y_H)=1$, then $(X,G)$ is essentially free with respect to a lift of $\mu$. 
\end{enumerate}

   \end{prop}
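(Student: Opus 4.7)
The plan is to build $X$ as a (minimal closed $G$-invariant subset of the) product $Y\times G/N$, where $N$ is a finite-index normal subgroup of $G$ with $N\cap H=\{1_G\}$. The hypothesis $H\cap R(G)=\{1_G\}$ together with the finiteness of $H$ furnishes such an $N$: for each $h\in H\setminus\{1_G\}$ pick a finite-index subgroup $K_h<G$ avoiding $h$, and set $K=\bigcap_{h\in H\setminus\{1_G\}}K_h$. Then $K$ has finite index in $G$ and $K\cap H=\{1_G\}$, and its normal core $N=\bigcap_{g\in G}gKg^{-1}$ is a finite-index normal subgroup of $G$ still satisfying $N\cap H=\{1_G\}$.

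Equip $X_0=Y\times G/N$ with the diagonal $G$-action $g(y,aN)=(gy,gaN)$, and let $\pi_0:X_0\to Y$ be the first projection. This is a $G$-equivariant continuous surjection whose fibres all have size $m:=[G:N]$. The key computation is that, for any $(y,aN)\in X_0$, normality of $N$ gives
$$
G_{(y,aN)}=G_y\cap aNa^{-1}=G_y\cap N,
$$
so if $y\in Y_H$ and $G_y=gHg^{-1}$, then $G_{(y,aN)}=g(H\cap N)g^{-1}=\{1_G\}$. Taking $X=X_0$ and $\pi=\pi_0$ already yields the main body of the proposition, including the fibre bound.

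When $(Y,G)$ is minimal, I replace $X_0$ by a minimal closed $G$-invariant subset $X\subseteq X_0$ (which exists by Zorn's lemma). Since $\pi(X)$ is a non-empty closed $G$-invariant subset of $Y$, $\pi(X)=Y$, so $\pi:=\pi_0|_X$ is a factor map with fibres of size at most $m$, and the stabilizer inclusion is inherited. For topological freeness, $\pi^{-1}(Y_H)\cap X$ is a non-empty $G$-invariant subset of $X$ consisting of points with trivial stabilizer, hence dense in $X$ by minimality. Since $\{1_G\}$ is trivially almost normal and $X_{\{1_G\}}\supseteq\pi^{-1}(Y_H)\cap X$ is dense, Theorem~\ref{converse}(3) applied to $H'=\{1_G\}$ identifies $X_{\{1_G\}}$ with the (always residual) set of points of $X$ with trivial holonomy.

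For part (2), given an ergodic $\mu$ on $Y$ with $\mu(Y_H)=1$, I build a lift by fibre averaging. Since $g\in G$ restricts to a bijection $\pi^{-1}\{y\}\to\pi^{-1}\{gy\}$ and the uniform probability on a finite set is bijection-invariant, taking $\mu_y$ to be the uniform probability on $\pi^{-1}\{y\}$ yields a $G$-equivariant family $y\mapsto\mu_y$, and
$$
\mu'=\int_Y\mu_y\,d\mu(y)
$$
is a $G$-invariant Borel probability on $X$ with $\pi_*\mu'=\mu$ and $\mu'(\pi^{-1}(Y_H))=1$; the stabilizer computation above then forces $(X,G,\mu')$ to be essentially free. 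I expect the main obstacle to be the topological-freeness step of part (1): upgrading the easily obtained density of $X_{\{1_G\}}$ to residuality, which is exactly what Theorem~\ref{converse}(3) provides once one recognises $\{1_G\}$ as an almost normal subgroup whose conjugacy class $\{\{1_G\}\}$ is a fixed point of the conjugation action on $S(G)$.
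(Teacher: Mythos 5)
Your construction is exactly the paper's: take a finite-index normal subgroup $\Gamma$ of $G$ with $H\cap\Gamma=\{1_G\}$, form the product system $Y\times G/\Gamma$ with the diagonal action (on which the stabilizer computation $G_{(y,a\Gamma)}=G_y\cap\Gamma$ gives freeness over $Y_H$), pass to a minimal component for part (1), and lift $\mu$ by the product with normalized counting measure on $G/\Gamma$ for part (2). The proof is correct; the only cosmetic differences are that the paper deduces topological freeness from the existence of a single point with trivial stabilizer (citing Lemma 2.1 of Joseph's paper) where you invoke Theorem \ref{converse}(3), and it writes the lift directly as $\mu\times\lambda$ rather than as a fibre average.
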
\label{residually-finite-case}

\begin{proof}
   By hypothesis, for every $h\in H\setminus\{1_G\}$ there exists a finite index normal subgroup $\Gamma_h$ of $G$ such that $h\notin \Gamma_h$. Since $H$ is finite, $\Gamma=\bigcap_{h\in H\setminus\{1_G\}}\Gamma_h$ is a normal finite index subgroup of $G$ verifying $H\cap \Gamma=\{1_G\}$. Observe that $gHg^{-1}\cap \Gamma=\{1_G\}$, for every $g\in G$. 
 
 From this, we deduce that the action of $G$ on $Y\times G/\Gamma$ given by $g(x,w\Gamma)=(gx,gw\Gamma)$ is free when restricted to $Y_H\times G/\Gamma$. Indeed, suppose that
  $$g(x,w\Gamma) = (gx,gw\Gamma) = (x,w\Gamma).$$
Then $gx = x$ implies that $g \in lHl^{-1}$, for some $l\in G$, and, since $\Gamma$ is normal, $g w\Gamma = w\Gamma$ implies that $g \in \Gamma$. Then we must have $g=1_G$. 
 Furthermore, the projection $\pi: Y\times G/\Gamma\to Y$  is a $[G:\Gamma]$-to-1 factor map on $Y\times G/\Gamma$. 

  Let $X\subseteq Y\times G/\Gamma$ be a minimal component of the action of $G$ on $Y\times G/\Gamma$. If $(Y,G)$ is minimal, then $\pi(X)=Y$, which implies that the restriction $\pi|_X$ of $\pi$ on $X$ is a factor map that satisfies $\pi_X^{-1}Y_H\neq \emptyset$, and since the points in $\pi_X^{-1}Y_H$ have trivial stabilizer, we deduce that $(X,G)$ has points with trivial stabilizer. This implies that $(X,G)$ is topologically free (see, for example, \cite[Lemma 2.1]{Joseph2024}).
 
 If $\mu$ is an ergodic measure of $(Y,G)$ such that $\mu(Y_H)=1$, then  $\mu\times \lambda$, where $\lambda$ is the normalized counting measure on $G/\Gamma$, is a lift of $\mu$ with respect to which $(Y\times G/\Gamma, G)$ is essentially free.
\end{proof}

For a subgroup $H$ of $G$ we denote by $Z_H$ the subgroup of all elements $g\in G$ such that $gh=hg$, for every $h\in H$. Denote by $C_H$ the \emph{center} of $H$, i.e.~$C_H=Z_H\cap H$. 
 
 \begin{remark}\label{quasi-general-case_0}
{\rm Every almost normal finite index subgroup $H$ of $G$ with trivial center satisfies the hypothesis of Theorem \ref{theo-extension1}. Indeed, the N/C Theorem (see, for example,  \cite{Scott1964}) ensures that $N_G(H)/Z_H$ is isomorphic to a subgroup of $\Aut(H)$, which implies that $[N_G(H):Z_H]$ is finite. On the other hand, since $H$ is almost normal, $[G:N_G(H)]$  is finite, which implies that $[G: Z_H]$ is finite. Then, if $C_H$ is the center of $H$,  we get $H\cap R(G)\subseteq H\cap Z_H=C_H=\{1_G\}$. }
\end{remark}

\begin{coro}\label{quasi-general-case}
Let $H$ be a finite subgroup of $G$ with a trivial center.  Let $(Y,G,\mu)$ be a system equipped with an ergodic measure $\mu$. Then the following statements are equivalent: 
\begin{enumerate}
 \item $\mu(Y_H)=1$.
 \item $\mu(Y_H)=1$, i.e.~$(Y,G,\mu)$ has no essential holonomy w.r.t.~$\mu$.
 \item There exist an essentially free system $(X,G)$ with respect to a lift of $\mu$, and a finite-to-one $\mu$-a.e.~factor map $\pi:X\to Y$.
\end{enumerate}
\end{coro}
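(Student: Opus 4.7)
\textbf{Proof plan for Corollary \ref{quasi-general-case}.}
The plan is to derive the equivalence from two results already available in the paper: Proposition \ref{theo-main-res-finite} for $(1)\Rightarrow(3)$, and Proposition \ref{factor-topologically-free} for the reverse. Statements (1) and (2) are literally identical as written; the ``i.e.''\ in (2) is meant as an interpretation via Theorem \ref{finite-summarize}\,(4), under which $Y_H$ equals the set of points with trivial holonomy once $H$ is almost normal. So no work is required for $(1)\Leftrightarrow(2)$, and I will focus on $(1)\Leftrightarrow(3)$.

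For $(1)\Rightarrow(3)$, I would begin by upgrading the hypothesis on $H$. Since $\mu(Y_H)=1>0$, Lemma \ref{finite-conjugacy-classes} shows that $H$ is almost normal. Combined with the standing assumption that $H$ is finite and has trivial centre, Remark \ref{quasi-general-case_0} yields the key group-theoretic conclusion $H\cap R(G)=\{1_G\}$. This is exactly the hypothesis needed to invoke Proposition \ref{theo-main-res-finite} with the dynamical system $(Y,G)$ and the ergodic measure $\mu$ satisfying $\mu(Y_H)=1$. Its measure-theoretic case (2) produces a dynamical system $(X,G)$ which is essentially free with respect to a lift $\mu_X$ of $\mu$, together with a factor map $\pi\colon X\to Y$ with $|\pi^{-1}(y)|\leq m$ for every $y\in Y_H$. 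Since $\mu(Y_H)=1$, the map $\pi$ is finite-to-one $\mu$-a.e., giving (3).

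For $(3)\Rightarrow(1)$, I plan to feed the data of (3) into Proposition \ref{factor-topologically-free}. Applied to the essentially free $(X,G,\mu_X)$ with $\pi_*\mu_X=\mu$ and $\pi$ finite-to-one $\mu$-a.e., its conclusion (1) delivers an almost normal finite subgroup $H'$ of $G$ with $\mu(Y_{H'})=1$. Ergodicity of $\mu$, together with $G$-invariance of each set $Y_K$, forces $H'$ to be unique up to conjugacy among finite subgroups with $\mu(Y_{\cdot})=1$, and moreover its stabilizer URS is $\Conj(H')$. Identifying the $H$ of the hypothesis with this distinguished $H'$ (which inherits a trivial centre since the whole chain rests on Remark \ref{quasi-general-case_0}), we conclude $\mu(Y_H)=1$.

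The only genuine obstacle I anticipate is the bookkeeping between the specific $H$ fixed in the statement and the $H'$ arising from Proposition \ref{factor-topologically-free}; the cleanest reading, consistent with how (2) is phrased, is to take both (1) and (3) as existence statements (of $H$ in one case, of the essentially free extension in the other), after which the ergodicity argument above identifies the two canonically. All the analytic and dynamical content has been isolated in Propositions \ref{theo-main-res-finite} and \ref{factor-topologically-free}, so the role of the present corollary is mainly to package the trivial-centre hypothesis into a clean equivalence via Remark \ref{quasi-general-case_0}.
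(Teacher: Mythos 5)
Your proposal is correct and follows essentially the same route as the paper: Lemma \ref{finite-conjugacy-classes} plus Remark \ref{quasi-general-case_0} to get $H\cap R(G)=\{1_G\}$, then Proposition \ref{theo-extension1} (the proof of Proposition \ref{theo-main-res-finite}) for $(1)\Rightarrow(3)$ and Proposition \ref{factor-topologically-free} for $(3)\Rightarrow(1)$. The bookkeeping issue you flag about matching the given $H$ with the subgroup produced in $(3)\Rightarrow(1)$ is real but is glossed over in the paper as well, and your resolution via disjointness of the sets $Y_K$ and ergodicity is the intended one.
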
 
\begin{proof}
 If $\mu(Y_H)=1$,   Lemma \ref{finite-conjugacy-classes} implies that $H$ is almost normal, and so its stabilizer URS is ${\rm Conj}(H)$ and $(Y,G,\mu)$ has no essential holonomy w.r.t.~$\mu$, i.e.~(2) holds. By Remark \ref{quasi-general-case_0} and Proposition \ref{theo-extension1}, we get (3). The implication (3) to (1) follows from Proposition \ref{factor-topologically-free}.
\end{proof}

We now give a few conditions which imply that $R(G) \cap H = \{1_G\}$, for a group $G$ and an almost normal subgroup $H$.

The next result is a direct consequence of Propositions \ref{factor-topologically-free}, \ref{theo-extension1}, and Corollary \ref{finite-summarize-RF}.
\begin{coro}\label{coro-residuallyfinite}
 Let $G$ be a residually finite group. Let $(Y,G)$ be a dynamical system equipped with an ergodic measure $\mu$. Then the following statements are equivalent:
\begin{itemize}
 \item[(1)] There exists a finite subgroup $H$ of $G$ such that $\mu(Y_H)=1$.
\item[(2)]There exists a finite subgroup $H$ of $G$ such that $\mu(Y_H)=1$ and $(Y,G,\mu)$ has no essential holonomy w.r.t.~$\mu$.
 \item[(3)]There exist an essentially free system $(X,G)$ with respect to a lift of $\mu$, and a finite-to-one $\mu$-a.e.~factor map $\pi:X\to Y$.
 \item[(4)] There exists an almost normal finite subgroup $H$ of $G$ such that $Y_H\neq \emptyset$.

\end{itemize}
If $G$ is not allosteric and $(Y,G)$ is minimal,  then the previous sentences are equivalent to the following:
\begin{itemize}

\item[(5)] There exist an almost normal finite subgroup $H$ of $G$, a minimal topologically free dynamical system $(X,G)$, and a factor map $\pi:X\to Y$ such that for some $m\geq 1$, we have $|\pi^{-1}\{y\}|\leq m$, for every $y\in Y_H$.  
    
\end{itemize}
\end{coro}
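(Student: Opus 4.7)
The plan is to establish the cyclic equivalence $(1)\Rightarrow(2)\Rightarrow(3)\Rightarrow(4)\Rightarrow(1)$ among the first four statements, and then handle the equivalence with $(5)$ separately under the added hypotheses of minimality and non-allostery.

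For $(1)\Rightarrow(2)$, I would first invoke Lemma \ref{finite-conjugacy-classes} to observe that any finite subgroup $H$ with $\mu(Y_H)=1$ is automatically almost normal. Restricting to the closed invariant set $\overline{Y_H}\supseteq\supp\mu$, the set $Y_H$ is dense there, so Theorem \ref{converse}(3) identifies $Y_H$ with the points of trivial holonomy, giving no essential holonomy $\mu$-a.e. For $(2)\Rightarrow(3)$, the point is that residual finiteness of $G$ forces $R(G)=\{1_G\}$, so the hypothesis $H\cap R(G)=\{1_G\}$ of Proposition \ref{theo-extension1} is automatic; part (2) of that proposition then produces a dynamical system $(X,G)$ essentially free with respect to a lift of $\mu$, together with a factor map $\pi\colon X\to Y$ whose fibers are uniformly bounded on $Y_H$ and hence finite-to-one $\mu$-a.e. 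For $(3)\Rightarrow(4)$, I would apply the second half of Proposition \ref{factor-topologically-free}, which directly extracts an almost normal finite $H$ with $\mu(Y_H)=1$, and in particular $Y_H\neq\emptyset$.

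The implication $(4)\Rightarrow(1)$ is the step I expect to be the main obstacle. Given an almost normal finite $H$ with $Y_H\neq\emptyset$, Proposition \ref{theo-extension1} (again with $H\cap R(G)=\{1_G\}$ automatic) builds a system $(X,G)$ and a factor map $\pi\colon X\to Y$ with $\pi^{-1}(Y_H)\subseteq\{x\colon G_x=\{1_G\}\}$ and $|\pi^{-1}(y)|\le m$ for $y\in Y_H$. The plan is then to lift $\mu$ to an ergodic invariant measure $\mu'$ on $(X,G)$—such a lift exists by compactness and convexity of the fiber of $\pi_*$ over $\mu$, followed by extremal decomposition—and to argue via $\pi_*\mu'=\mu$ and ergodicity that $\mu'(\pi^{-1}(Y_H))=\mu(Y_H)\in\{0,1\}$ must be $1$, so that $(X,G,\mu')$ is essentially free. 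Feeding this back into the second half of Proposition \ref{factor-topologically-free} yields an almost normal finite $H'$ with $\mu(Y_{H'})=1$, closing the cycle. The delicate point lives precisely here: bridging from the purely topological datum $Y_H\neq\emptyset$ to the measure-theoretic conclusion $\mu(Y_H)=1$ requires using the invariance of $Y_H$ under $G$ together with the finite-fiber structure of $\pi$ over $Y_H$ to rule out the case $\mu(Y_H)=0$.

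For the equivalence with $(5)$ under the additional hypotheses that $(Y,G)$ is minimal and $G$ non-allosteric, I would derive $(1)\Rightarrow(5)$ by applying Proposition \ref{theo-extension1}(1) to produce a minimal topologically free extension $(X,G)$ with bounded fibers on $Y_H$, and use Corollary \ref{finite-summarize-RF} to ensure that all the conditions of Theorem \ref{finite-summarize} coincide in this setting. The converse $(5)\Rightarrow(4)$ is immediate from the statement of $(5)$, since the topologically free extension with bounded fibers on $Y_H$ presupposes that $H$ is almost normal finite and $Y_H\neq\emptyset$. Corollary \ref{finite-summarize-RF} is the key mechanism that makes the loop into $(1)$ work cleanly in this restricted regime.
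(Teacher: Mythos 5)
Your chain $(1)\Rightarrow(2)\Rightarrow(3)\Rightarrow(4)$ is sound (modulo one caveat below), and the ingredients you cite are the right ones: Lemma \ref{finite-conjugacy-classes} for almost normality, Proposition \ref{theo-extension1} with $R(G)=\{1_G\}$ for the essentially free extension, and the second half of Proposition \ref{factor-topologically-free} to come back down. The genuine gap is exactly where you suspect it, at $(4)\Rightarrow(1)$, and the mechanism you propose cannot close it. From $Y_H\neq\emptyset$, Proposition \ref{theo-extension1} does give an extension $\pi:X\to Y$ with $\pi^{-1}Y_H$ consisting of points with trivial stabilizer and with bounded fibers over $Y_H$, and an ergodic lift $\mu'$ of $\mu$ does exist; but invariance of $Y_H$ plus ergodicity only yields $\mu(Y_H)\in\{0,1\}$, and nothing in the finite-fiber structure of $\pi$ over $Y_H$ forces the value $1$ --- the fibers over $Y^c_H$ are also uniformly bounded, so the extension gives no leverage to distinguish the two alternatives. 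In fact the implication is false without further hypotheses: take $G$ residually finite and allosteric (free groups, via the examples of \cite{AE2007,BergeronGaboriau2004} cited in the paper, or the construction in Remark \ref{remark-allosteric}) and a minimal topologically free, not essentially free action $(Y,G,\mu)$; then $H=\{1_G\}$ is finite, almost normal, and $Y_H\neq\emptyset$, yet $\mu(Y_{H'})<1$ for every finite $H'$ because $\mu$-a.e.\ stabilizer is infinite. The step $(4)\Rightarrow(1)$ is precisely where Corollary \ref{finite-summarize-RF} (equivalently, Theorem \ref{thm-irs-urs}) must be invoked, and that corollary carries the hypotheses ``$(Y,G)$ minimal'' and ``$G$ not allosteric'' which appear only in the second half of the statement. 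So you should either place those hypotheses in force for the whole equivalence of $(1)$--$(4)$ (which is how the paper's three cited ingredients actually fit together), or accept that your cycle closes only as $(1)\Leftrightarrow(2)\Leftrightarrow(3)\Rightarrow(4)$.

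Two smaller points. First, in $(1)\Rightarrow(2)$, restricting to $\overline{Y_H}\supseteq\supp\mu$ and applying Theorem \ref{converse}(3) gives trivial holonomy with respect to \emph{relative} neighborhoods in $\overline{Y_H}$, whereas Definition \ref{defi-essential} asks for neighborhoods open in $Y$; without minimality of $(Y,G)$ these differ (the periodic-orbit situation of Remark \ref{transitive-example} is the model case), so this implication too really wants the minimality hypothesis. Second, in $(3)\Rightarrow(4)$ note that Proposition \ref{factor-topologically-free} is stated for $(X,G)$ minimal and topologically free; ``essentially free with respect to a lift of $\mu$'' gives a point of $X$ with trivial stabilizer but not topological freeness unless $(X,G)$ is minimal, so you should either pass to a minimal component carrying the lift or argue directly with Lemmas \ref{constant-pre-images} and \ref{finite-stabilizer} and Proposition \ref{old-corollary-00}, which is all that part of the proof actually uses. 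Your treatment of $(5)$ under the added hypotheses is correct.
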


 \begin{prop}\label{sufficient-condition}
 Let $(Y,G)$ be a minimal system such that $Y_H\neq\emptyset$, for some almost normal finite subgroup $H$ of $G$. Suppose there exist a minimal  system $(X,G)$, a factor map $\pi:X\to Y$, and a constant $m\geq 1$ such that
  $$
  \pi^{-1}(Y_H)\subseteq\{x\in X: G_x=\{1_G\}\},$$  
  and
  $$
 |\pi^{-1}\{y\}|\leq m, \mbox{ for all } y\in Y.
 $$
 Then for every $y\in \overline{Y_H(1_G)}$, we have $H\cap R(G_y) =\{1_G\}$.
 \end{prop}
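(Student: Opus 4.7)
My plan is to reduce to a faithfulness statement and then argue by contradiction. By Lemma \ref{finite-stabilizer} together with the uniform fiber bound $|\pi^{-1}\{y\}|\le m$, each stabilizer $G_x$ for $x\in\pi^{-1}\{y\}$ has index at most $m!$ in $G_y$, so $R(G_y)\subseteq\bigcap_{x\in\pi^{-1}\{y\}}G_x$, and this intersection is precisely the kernel of the action of $G_y$ on the finite set $\pi^{-1}\{y\}$. It therefore suffices to show that $H$ acts faithfully on $\pi^{-1}\{y\}$.

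For $y\in Y_H(1_G)$ this is automatic, because then $G_y=H$ is finite and $R(G_y)=R(H)=\{1_G\}$. The substantive case is $y\in\overline{Y_H(1_G)}\setminus Y_H(1_G)$. I would assume for contradiction that some $h\in H\setminus\{1_G\}$ of order $r\ge 2$ fixes every point of $\pi^{-1}\{y\}$. Applying Corollary \ref{converse-minimal} to the minimal system $(Y,G)$, the clopen sets $\overline{Y_H(g_i)}$ form a pairwise disjoint partition of $Y$, so any sequence in $Y_H$ converging to $y$ must eventually lie in $Y_H(1_G)$. Combined with density of $\pi^{-1}(Y_H)$ in $X$ (which follows from minimality of $(Y,G)$ together with continuity of $\pi$), this lets me pick, for each chosen $x\in\pi^{-1}\{y\}$, a sequence $x_n\to x$ with $G_{x_n}=\{1_G\}$ and $\pi(x_n)\in Y_H(1_G)$.

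Then $h\in H=G_{\pi(x_n)}$ but $h\notin G_{x_n}$, so $\langle h\rangle$ acts freely on the fiber $\pi^{-1}\{\pi(x_n)\}$, and the $r$ iterates $x_n,hx_n,\ldots,h^{r-1}x_n$ are distinct elements of this fiber, all converging to $x$ since $hx=x$. In particular, any sufficiently small neighborhood of $x$ eventually contains at least $r$ points of $\pi^{-1}\{\pi(x_n)\}$.

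The main obstacle is promoting this local clustering at a single $x$ to a global contradiction with the uniform bound $|\pi^{-1}\{y'\}|\le m$. My plan is to use a diagonal argument based on minimality of $(X,G)$ to produce a single sequence $y_n\to y$ in $Y_H(1_G)$ for which every $x\in\pi^{-1}\{y\}$ is simultaneously approached by a full $\langle h\rangle$-orbit in $\pi^{-1}\{y_n\}$; taking disjoint neighborhoods of the points of $\pi^{-1}\{y\}$ then yields $|\pi^{-1}\{y_n\}|\ge r\cdot|\pi^{-1}\{y\}|$. Since $y\mapsto|\pi^{-1}\{y\}|$ is $G$-invariant, integer-valued, and bounded above by $m$, the minimality of $(Y,G)$ should allow one to compare $|\pi^{-1}\{y_n\}|$ with $|\pi^{-1}\{y\}|$ and force $r\le 1$, contradicting $h\ne 1_G$.
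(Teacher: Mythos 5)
Your opening reduction is sound and coincides with the first half of the paper's argument: Lemma \ref{finite-stabilizer} gives $[G_y:G_x]\le m!$ for every $x\in\pi^{-1}\{y\}$, hence $R(G_y)\subseteq\bigcap_{x\in\pi^{-1}\{y\}}G_x$, and since $H\subseteq G_y$ for $y\in\overline{Y_H(1_G)}$ (Lemma \ref{lemma_contention}), it suffices to show $H$ meets this kernel trivially. The local clustering observation is also correct: if $h\in H\setminus\{1_G\}$ fixes $x\in\pi^{-1}\{y\}$ and $x_n\to x$ with $\pi(x_n)\in Y_H(1_G)$, then $x_n,hx_n,\dots,h^{r-1}x_n$ are $r$ distinct points of $\pi^{-1}\{\pi(x_n)\}$ converging to $x$. (Minor point: the density of $\pi^{-1}(Y_H)$ in $X$ follows from minimality of $(X,G)$ applied to the non-empty invariant set $\pi^{-1}(Y_H)$, not from minimality of $(Y,G)$ plus continuity of $\pi$; preimages of dense sets under factor maps need not be dense.)

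The final step, however, is a genuine gap, and I do not see how to close it along the lines you propose. Two things are missing. First, the ``diagonal argument'': to find a single $y_n\in Y_H(1_G)$ whose fiber clusters at \emph{every} point of $\pi^{-1}\{y\}$ you would need the fibers $\pi^{-1}\{y'\}$ to converge to $\pi^{-1}\{y\}$ in the Vietoris sense as $y'\to y$. A factor map only guarantees that nearby fibers are \emph{contained} in a neighborhood of $\pi^{-1}\{y\}$, not that they meet a neighborhood of each of its points, and $y$ is a prescribed point, so you cannot arrange for it to be a continuity point of the fiber map. Second, and more seriously, even the inequality $|\pi^{-1}\{y_n\}|\ge r\cdot|\pi^{-1}\{y\}|$ yields no contradiction: it is perfectly consistent with the uniform bound $m$ (take $|\pi^{-1}\{y\}|=1$, $r=2$, $m$ large), and the comparison you hope minimality will supply would amount to upper semicontinuity of $y'\mapsto|\pi^{-1}\{y'\}|$ at $y$, which fails in general; minimality only gives that this function is constant on orbits. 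The ingredient you are missing is exactly Lemma \ref{key}: $(X,G)$ is minimal and has a point with trivial stabilizer (any point of $\pi^{-1}(Y_H)$), hence is topologically free, and since $\bigcup_{g\in G}gHg^{-1}$ is finite one concludes $G_x\cap gHg^{-1}=\{1_G\}$ for \emph{every} $x\in X$. This gives the stronger, pointwise statement $H\cap G_x=\{1_G\}$ for each $x\in\pi^{-1}\{y\}$, which combined with your first paragraph finishes the proof; this is precisely the paper's route.
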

 \begin{proof}
Let $y\in Y$, and let $\{g_1,\ldots,g_n\}$ be a set of representatives of the cosets of $G/N_G(H)$, with $g_1 = 1_G$. By Corollary \ref{converse-minimal}, there exists a unique $1\leq i \leq n$ such that $y\in \overline{Y_H(g_i)}$, and there is a factor map $\phi': Y \to G/N_G(H)$ such that $\phi'(y) = g_i N_G(H)$. Observe that if $x\in \pi^{-1}\{y\}$, then $G_x\subseteq g_iN_G(H){g_i}^{-1}$. Since $g_iHg_i^{-1}$ is a normal subgroup of $g_iN_G(H)g_i^{-1}$, we have $\langle g_iHg_i^{-1}\cup G_x\rangle=g_iHg_i^{-1}G_x\subseteq G_y$. Since $\pi$ is $m$-to-one,  from Lemma \ref{finite-stabilizer} we deduce that the group $g_iHg_i^{-1}G_x$ is of finite index in $G_y$. On the other hand, since $g_iHg_i^{-1}$ is finite, $G_x$ is of finite index in $g_iHg_i^{-1}G_x$, and then in $G_y$. From Lemma \ref{key}, we get that $G_x$ is a finite index subgroup of $G_y$ such that $g_iHg_i^{-1}\cap G_x=\{1_G\}$. This implies that $g_iHg_i^{-1}\cap R(G_y)=\{1_G\}$. Since $y \in Y$ is arbitrary, the proof is valid, in particular, for $y \in Y_H(g_1) = Y_H(1_G)$, and we obtain the statement of the proposition.
\end{proof}

\begin{remark}\label{final-remark}
{\rm In the context of Proposition \ref{sufficient-condition}, observe that $R(G_y)\subseteq R(G)\cap G_y$. Thus, $R(G)\cap H=\{1_G\}$ implies $R(G_y)\cap H=\{1_G\}$. These conditions are equivalent in the case $R(G_y)=R(G)\cap G_y$, that is, when every finite index subgroup of $G_y$ comes from a finite index subgroup of $G$. We do not know if in general, the conditions of Proposition \ref{sufficient-condition} are also sufficient.}    
\end{remark}
 
In order to describe our final example, we first prove two auxiliary lemmas, Lemma \ref{sufficient-condition-1} and Lemma \ref{lemma-trivial-core}. 

\begin{lemma}\label{sufficient-condition-1}
    Let $(Y,G)$ be a dynamical system, let $x \in Y$ be a point with a dense orbit. Let $H = G_x$, and suppose $H$ is an almost normal subgroup of $G$. If the union of conjugates $\cup_{g \in G} gHg^{-1}$ is not contained in the intersection of normalizers
      $$\bigcap_{g \in G} N_G(gHg^{-1}) = \bigcap_{g \in G} gN_G(H)g^{-1},$$
    then $R(G) \cap H = \{1_G\}$.  
\end{lemma}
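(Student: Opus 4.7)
The plan is to argue the contrapositive: suppose there exists a nontrivial $h_0 \in R(G) \cap H$ and deduce $\bigcup_{g} gHg^{-1} \subset \Gamma$, where $\Gamma := \bigcap_g gN_G(H)g^{-1}$. Because $\Gamma$ is normal in $G$, this inclusion is equivalent to $H \subset \Gamma$. Since $H$ is almost normal, $\Gamma$ is precisely the core of $N_G(H)$ in $G$, hence a normal subgroup of finite index (its index divides $[G:N_G(H)]!$), so $R(G) \subset \Gamma$; in particular $h_0$ lies in the kernel of the projection $H \to G/\Gamma$. The hypothesis $\bigcup_g gHg^{-1} \not\subset \Gamma$ translates into the assertion that the image $\bar{H} = H\Gamma/\Gamma$ is nontrivial in the finite group $G/\Gamma$, and our goal is to derive a contradiction from the coexistence of $h_0 \in R(G) \cap H$ and $\bar H \neq 1$.

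I would next exploit the conjugation action of $G$ on the finite set $\bigcup_g gHg^{-1}$ (finite by almost normality, assuming $H$ is finite as in the intended application to Proposition~\ref{theo-extension1}). This action factors through a finite symmetric group whose kernel is $C_G(H^G)$, where $H^G$ denotes the normal closure of $H$ in $G$. Consequently $C_G(H^G)$ has finite index in $G$, which forces $R(G) \subset C_G(H^G)$; in particular $h_0 \in Z(H^G) \cap R(G)$. The normal closure $\langle h_0 \rangle^G$ is therefore generated by finitely many pairwise commuting finite-order elements, and so is a nontrivial finite abelian normal subgroup of $G$ contained in $R(G) \cap H^G$.

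The main obstacle is the final step: combining the hypothesis $\bar H \neq 1$ with this centralizing structure to produce a finite-index normal subgroup of $G$ not containing $h_0$, which would contradict $h_0 \in R(G)$. The critical input should be the density of the orbit $Gx$: because the stabilizers $G_{gx} = gHg^{-1}$ along this orbit realize every conjugate of $H$, a nontrivial $\bar H$-action on $\Conj(H)$ produces some $h \in H$ acting non-trivially on an orbit point in a way governed by the permutation representation on $\Conj(H)$. Exploiting the rigidity imposed by $h_0 \in Z(H^G)$ together with this nontrivial permutation action should yield a finite quotient of $G$ on which $h_0$ survives nontrivially. Making this last separation precise, in a way that uses both the finiteness of $H$ and the dense orbit, is the technically hardest part of the argument.
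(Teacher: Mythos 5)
Your first paragraph already contains the whole of the paper's argument, reached by an equivalent route: the paper lets $G$ act by conjugation on the finite set $\Conj(H)$ and observes that the kernel of the resulting homomorphism $\phi\colon G \to S_p$ is exactly $\Gamma = \bigcap_{g\in G} gN_G(H)g^{-1}$, a finite-index normal subgroup of $G$, whence $R(G) \subseteq \Gamma$; you obtain the same inclusion by noting that $\Gamma$ is the core of the finite-index subgroup $N_G(H)$. From there the paper simply says: the hypothesis produces $h \in H$ with $h \notin \Gamma$ (using normality of $\Gamma$ to pass from some conjugate of $H$ back to $H$ itself), hence $h \notin R(G)$, which is incompatible with $H \subseteq R(G)$ --- the case to which the paper reduces at the outset (``without loss of generality''). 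The extra machinery in your second paragraph (the centralizer $C_G(H^G)$, normal closures of $h_0$, the dense orbit) plays no role in the paper's proof; note also that it silently assumes $H$ is finite, which is not a hypothesis of the lemma.

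The genuine gap is the one you candidly flag at the end, and you should be aware it is not an artifact of your particular route. From a single nontrivial $h_0 \in R(G)\cap H$ you can only conclude $h_0 \in \Gamma$, i.e.\ $H\cap\Gamma \ne \{1_G\}$, while the hypothesis tells you only that $H \not\subseteq \Gamma$; these two statements are compatible, so no contradiction is forthcoming and your contrapositive does not close. What the core computation actually establishes is the implication ``$H \subseteq R(G)$ implies $\bigcup_{g\in G} gHg^{-1} \subseteq \Gamma$,'' equivalently that the stated hypothesis forces $H \not\subseteq R(G)$; passing from this to the literal conclusion $R(G)\cap H = \{1_G\}$ requires exactly the reduction you were unable to justify, and the paper performs it by fiat in its opening ``WLOG'' rather than by argument. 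So: your key idea coincides with the paper's, your write-up is incomplete as a proof, and the missing step is precisely the point at which the paper's own proof is at its thinnest.
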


\begin{proof}
    Without loss of generality we may assume that $H \subset R(G)$. Then for any finite index normal subgroup $N \subset G$ we have $H \subset N$. We will show that this cannot be the case if the hypothesis of the lemma is satisfied. 

    Consider the action of the group $G$ on the set of conjugates ${\rm Conj}(H)$ of $H$. Let $p$ be the cardinality of ${\rm Conj}(H)$. The action permutes the elements in ${\rm Conj}(H)$, and thus we obtain a homomorphism 
      $$\phi: G \to S_p,$$
    where $S_p$ is the symmetric group on $p$ elements.  The kernel $\ker \phi$ is a finite index normal subgroup of $G$, and so we must have $H \subset R(G) \subset \ker \phi$.

    Now suppose $h \in {\rm ker}(\phi)$, then its action on ${\rm Conj}(H)$ fixes all conjugates of $H$, i.e.~for all $g \in G$ we have 
      $$h(gHg^{-1})h^{-1} = gHg^{-1}.$$
    Conjugate both sides of the equation by $g^{-1}$, then
      $$(g^{-1}hg) H (g^{-1} h g)^{-1} = H,$$
  i.e. ~$g^{-1}h g \in N_G(H)$. Thus if there exists $h \in H$ and $g \in G$ such that $g^{-1} hg H gh^{-1} g^{-1} \ne H$, then $\phi(h)$ is a non-trivial permutation in $S_p$, which is not possible since $H \subset R(G)$. This proves the lemma.
\end{proof}

We finish the paper by describing another family of examples of actions with finite almost normal stabilizers. For that we first need the following technical lemma, whose proof we include for completeness.

\begin{lemma}\label{lemma-trivial-core}
     Let $(X,G)$ be a dynamical system, and suppose $x \in X$ has a dense orbit. Let $H = G_x$ be the stabilizer of the action of $G$ at $x$, and suppose the action $(X,G)$ is effective. Then $H$ has no non-trivial proper subgroups which are normal in $G$.
 \end{lemma}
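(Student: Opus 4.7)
The plan is to show, for any normal subgroup $N$ of $H$ with $N \neq \{1_G\}$, that $N$ acts as the identity on $X$; by effectiveness this forces $N = \{1_G\}$, so $H$ can have no nontrivial proper normal subgroup. The strategy is to study the common fixed-point set of $N$ and exploit density of the orbit $Gx$ together with the closedness of individual fixed-point sets.

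First, I would set $F := \bigcap_{n \in N} \mathrm{Fix}(n) \subseteq X$. Each $\mathrm{Fix}(n)$ is closed as the preimage of the diagonal under the continuous map $y \mapsto (y, ny)$, so $F$ is closed; and $F \ni x$ because $N \subseteq H = G_x$. Normality of $N$ in $H$ gives $H$-invariance of $F$: for $y \in F$ and $h \in H$, every $n \in N$ satisfies $n(hy) = h(h^{-1}nh)y = hy$ since $h^{-1}nh \in N$ fixes $y$. If I can upgrade $H$-invariance of $F$ to $G$-invariance, then $F \supseteq Gx$ is closed and dense, so $F = X$; hence every element of $N$ acts trivially, and effectiveness yields $N = \{1_G\}$.

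The crux, and the main obstacle, is extending $H$-invariance of $F$ to $G$-invariance. Since $gF = \mathrm{Fix}(gNg^{-1})$, one needs $gNg^{-1} \subseteq H$ for every $g \in G$ — that is, $N$ must lie in the normal core $\bigcap_{g \in G} gHg^{-1}$ of $H$ in $G$, a strictly stronger condition than normality in $H$ alone (for instance, $S_4$ acts transitively and effectively on the six cosets of $\langle (1234)\rangle$, with stabilizer $\mathbb{Z}/4\mathbb{Z}$, which has a proper nontrivial normal subgroup). I therefore expect the proof as actually written to reduce the statement to the assertion that this normal core is trivial — consistent with the label \texttt{lemma-trivial-core} — in which case the $G$-invariance of $F$ is automatic from $gNg^{-1} = N$, and the density-plus-closedness argument above delivers the conclusion directly; the lemma as stated would then be interpreted with ``normal subgroup'' read in this $G$-normal sense.
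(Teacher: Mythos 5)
Your proposal is correct in substance, and your diagnosis of where the difficulty lies is exactly what happens in the paper's own proof. That proof runs as follows: since the orbit of $x$ is dense, $\overline{X_H}=X$, so by Lemma \ref{lemma_contention} every $y\in X$ has $gHg^{-1}\subseteq G_y$ for some $g\in G$; then, for $N$ normal in $H$, the paper asserts $gNg^{-1}=N\subseteq gHg^{-1}$ \emph{for every} $g\in G$, concludes $N\subseteq G_y$ for all $y\in X$, and invokes effectiveness. The step $gNg^{-1}=N$ for arbitrary $g\in G$ is valid only when $N$ is normal in $G$, not merely in $H$ --- precisely the obstruction you isolated. So what the paper's argument actually establishes is the statement you predicted from the label: the normal core $\bigcap_{g\in G}gHg^{-1}$ is trivial, i.e.\ $H$ contains no non-trivial subgroup that is normal in $G$. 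Your $S_4$ example is a genuine counterexample to the literal statement: the action of $S_4$ on the six cosets of $\langle(1234)\rangle$ is effective (the core of $\langle(1234)\rangle$ in $S_4$ is trivial, as no non-trivial normal subgroup of $S_4$ lies inside it), transitivity gives a dense orbit, and the stabilizer $\ZZ/4\ZZ$ has a normal subgroup of order two; this fits the paper's standing framework (finite discrete $X$ is compact Hausdorff). Note also that the paper's subsequent uses of the lemma in Example \ref{FC-central} only ever apply it to subgroups normal in $G$ (e.g.\ $\bigcap_{i\geq 1}C_i\subseteq K(\mathcal G)$, with each $C_i$ normal in $G$), so your corrected reading is exactly the repair the paper needs.

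Beyond the diagnosis, your route differs from and improves on the paper's. The paper passes through Lemma \ref{lemma_contention}, whose hypotheses require $H$ to be almost normal --- an assumption absent from the present lemma, so that appeal is itself unjustified as written; your fixed-point argument bypasses it entirely and is self-contained: for $N\trianglelefteq G$ with $N\subseteq H$, each $n\in N$ fixes every point $gx$ of the orbit (because $g^{-1}ng\in N\subseteq H=G_x$), so the closed set $F=\bigcap_{n\in N}\mathrm{Fix}(n)$ contains $\overline{O(x)}=X$, and effectiveness gives $N=\{1_G\}$. One small remark: for $N$ contained in the core but not itself normal in $G$, your $F$ need not satisfy $gF=F$; but this is harmless, since the inclusion $O(x)\subseteq F$ already follows directly from $N\subseteq\bigcap_{g\in G}gHg^{-1}$ (no $G$-invariance of $F$ needed), or one may simply run the argument on the core itself, which is normal in $G$.
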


\begin{proof}
Recall that $X_H$ denotes the set of points in $X$ with stabilizers conjugate to $H$. Since $x$ has a dense orbit, we have $\overline{X_H} = X$, and so by Lemma \ref{lemma_contention} for every $y \in X$ there is $g \in G$ such that $G_y$ contains the conjugate $gHg^{-1}$ as a subgroup. Now let $N \subset H$ be a subgroup of $H$ normal in $G$. Then for any $g \in G$ we have $gNg^{-1} = N \subset gHg^{-1}$, and it follows that for any $y \in X$ we have $N \subset G_y$. Since the action is effective, $N$ is trivial.
\end{proof}

\begin{example}\label{FC-central}
   {\rm  
   We say that a subgroup $G' < G$ is an \emph{FC-subgroup} of $G$ if every element in $G'$ has a finite number of conjugates in $G$ \cite{Neumann1951}. If such $G'$ is finitely generated, then it is straightforward to see that $G'$ is an almost normal subgroup of $G$. If $G'$ contains all elements of $G$ with finite conjugacy class, then $G'$ is called the \emph{FC-center} of $G$ \cite{FGK98}. A family of group actions with almost normal stabilizers can be constructed as the actions of FC-central extensions (see for instance \cite{EZ2020}) of finitely generated residually finite groups $G$. 

   Let $G$ be a finitely generated residually finite group, and let $\mathcal G: G = G_0 \supset G_1 \supset \cdots$ be an infinite decreasing sequence of finite index subgroups of $G$. Denote by $K(\mathcal G) = \bigcap_{i \geq 0} G_i$, and consider the $G$-odometer action $(X,G)$ on a Cantor set $X$ defined as in Section \ref{odom-action}. If the action of $(X,G)$ is effective, by Lemma \ref{lemma-trivial-core} the intersection $K(\mathcal G)$ has no non-trivial proper normal subgroups. For each $i \geq 0$, the intersection $C_i = \bigcap_{g \in G} gG_ig^{-1}$ is a normal subgroup of $G$ of finite index. Thus $G/C_i$ is a finite group, and we have an infinite decreasing chain of normal subgroups $G = C_0 \supset C_1 \supset \cdots$, see for instance \cite{DHL2016}. Then $\bigcap_{i \geq 1} C_i \subset K(\mathcal G)$, and so by Lemma \ref{lemma-trivial-core} $\bigcap_{i \geq 1} C_i = \{1_G\}$, which in particular implies that the $G$-odometer action $(X,G)$ can only be effective if $G$ is residually finite.

     Let $S$ be a generating set of $G$, and 
    consider the space of Cayley graphs of all finitely generated groups with $|S|$ generators with Gromov-Hausdorff metric. That is, for a group $(G,S)$ the Cayley graph $(K,S)$ has $G$ as the set of vertices, and there is an edge between $g_1,g_2 \in G$ marked by $s \in S$ if and only if $s g_1 = g_2$. The Cayley graph $(K,S)$ has the natural length structure, given by declaring each edge to have length $1$, and the induced metric $d_{K,S}$, such that $d_{K,S}(w_1,w_2)$ is the infimum of the lengths of connected paths joining $w_1$ and $w_2$. 
    Then the distance between two Cayley graphs $(K_\ell,S_\ell)$, $\ell =1,2$ is given by $d = 2^{-m}$, where $m>0$ is the maximal integer, such that the balls of radius $m$ around the identity in $(K_\ell,S_\ell)$ are identical graphs. The Gromov-Hausdorff metric on Cayley graphs induces the metric, and so a topology, on the space of $k$-marked groups $(G,S)$, where $S$ is the generating set of $G$ with $k$ elements. This topology is called the Cayley topology, or the Cayley-Grigorchuk topology. 

    Now, returning to our sequence of groups $\mathcal G$, let $S$ be a generating set for $G$, and let $(K_i,S)$ be the Cayley graph of the quotient group $G/C_i$, for $i \geq 0$. Then, since $\bigcap_{i \geq 0} C_i = \{1_G\}$, the sequence $(K_i,S)$ converges with respect to the Gromov-Hausdorff metric to the Cayley graph $K$ of $G$, and so the sequence $G/C_i$ converges to the group $G$ in the Cayley topology. An \emph{FC-central extension} is then constructed as follows, see \cite[Remark 9.2]{EZ2020} for proofs. Let $F_k$ be a group with a finite generating set of cardinality $k = |S|$, and suppose we have a sequence of quotient maps $q_i:F_k \to G/C_i$, and $q:F_k \to G$. For instance, one can take $F_k$ to be the free group with $k$ generators. Next, define the universal group of the sequence $(K_i,S)$ by
      $$C = F_k/ \bigcap_{i \geq 0} \ker q_i.$$
    Then there exists a homomorphism $\widetilde q: C \to G$, and every $w \in \ker \widetilde q$ has a finite conjugacy class in $C$ \cite[Remark 9.2]{EZ2020}.  See \cite[Remark 9.3]{EZ2020} for an example of a non-trivial FC-extension.
    Let $\Gamma$ denote a finitely generated subgroup of $C$ which maps surjectively onto $G$, such that $\Gamma \cap \ker \widetilde q$ is non-trivial and is not contained in the center of $\Gamma$.

    Now let $H < \ker \widetilde q \cap \Gamma$ be a finitely generated subgroup. It is straightforward to see that $H$ is almost normal, indeed, since every element in $H$ has a finite number of conjugates, then the intersection of the centralizers of the generators of $H$ is a subgroup of finite index in $\Gamma$, contained in the normalizer of $H$. Let $H' = \bigcap_{g \in \Gamma} gHg^{-1}$ be the core of $H$ in $\Gamma$, then $H'$ is normal in $\Gamma$. Since the property of having a finite number of conjugacy classes is preserved under quotients \cite{Neumann1951}, $H/H'$ is an almost normal subgroup of $\Gamma/H'$. Thus ${\rm Conj}(H/H')$ is an URS in $S(\Gamma/H')$ which also supports an atomic IRS. By \cite[Proposition 13]{AGV2014} there exists a measure-preserving action of $\Gamma/H'$ on a Borel probability space $(X,\mu)$ such that the set of conjugates of $H/H'$ is the support of the IRS associated to $(X,\mu)$. Alternatively, by \cite[Theorem 1.1 and Corollary 1.2]{MT20}, the set of conjugates of $H/H'$ in $\Gamma/H'$ can be realized as the stabilizer URS of a minimal action on a compact space. All these actions are examples of dynamical systems with almost normal stabilizers.
}
\end{example}

\bibliographystyle{abbrv}
\bibliography{lit}

@article{AE2007,
  author  = {Mikl\'{o}s Ab\'{e}rt and G\'{a}bor Elek},
  title   = {Non-abelian free groups admit non--essentially free actions on rooted trees},
  journal = {arXiv preprint},
  year    = {2007},
  eprint  = {arXiv:0707.0970},
  url     = {https://arxiv.org/abs/0707.0970}
}

@article{AGV2014,
  author  = {Mikl\'{o}s Ab\'{e}rt and Yair Glasner and B\'{a}lint Vir\'{a}g},
  title   = {{Kesten's} theorem for invariant random subgroups},
  journal = {Duke Mathematical Journal},
  volume  = {163},
  number  = {3},
  pages   = {465--488},
  year    = {2014},
  doi     = {10.1215/00127094-2410064}
}

@article{ALC2009,
  author  = {Jes\'{u}s \'Alvarez L\'{o}pez and Alberto Candel},
  title   = {Equicontinuous foliated spaces},
  journal = {Mathematische Zeitschrift},
  volume  = {263},
  pages   = {725--774},
  year    = {2009},
  doi     = {10.1007/s00209-008-0432-4}
}

@book{Auslander1988,
  author    = {Joseph Auslander},
  title     = {Minimal Flows and Their Extensions},
  series    = {North-Holland Mathematics Studies},
  volume    = {153},
  publisher = {North-Holland Publishing Co.},
  year      = {1988}
}

@article{BergeronGaboriau2004,
  author  = {Nicolas Bergeron and Damien Gaboriau},
  title   = {Asymptotique des nombres de {Betti}, invariants $l^2$ et laminations},
  journal = {Commentarii Mathematici Helvetici},
  volume  = {79},
  pages   = {362--395},
  year    = {2004},
  doi     = {10.1007/s00014-003-0798-1}
}

@book{CC10,
  author    = {Tullio Ceccherini-Silberstein and Michel Coornaert},
  title     = {Cellular Automata and Groups},
  series    = {Springer Monographs in Mathematics},
  publisher = {Springer Berlin, Heidelberg},
  year      = {2010},
  isbn      = {978-3-642-14033-4}
}

@article{CortezPetite2008,
  author  = {Mar\'{i}a Isabel Cortez and Samuel Petite},
  title   = {{$G$-odometers} and their almost one-to-one extensions},
  journal = {Journal of the London Mathematical Society},
  volume  = {78},
  number  = {1},
  pages   = {1--20},
  year    = {2008},
  doi     = {10.1112/jlms/jdn002}
}

@article{DHL2016,
  author  = {Jessica Dyer and Steven Hurder and Olga Lukina},
  title   = {{The discriminant invariant of Cantor group actions}},
  journal = {Topology and its Applications},
  volume  = {208},
  pages   = {64--92},
  year    = {2016},
  doi     = {10.1016/j.topol.2016.05.005}
}

@article{EMT1977,
  author  = {D. B. A. Epstein and K. C. Millett and D. Tischler},
  title   = {Leaves without holonomy},
  journal = {Journal of the London Mathematical Society},
  series  = {2},
  volume  = {16},
  number  = {3},
  pages   = {548--552},
  year    = {1977},
  doi     = {10.1112/jlms/s2-16.3.548}
}

@article{EZ2020,
  author  = {Anna Erschler and Tianyi Zheng},
  title   = {{Growth of periodic Grigorchuk groups}},
  journal = {Inventiones Mathematicae},
  volume  = {219},
  pages   = {1069--1155},
  year    = {2020},
  doi     = {10.1007/s00222-019-00922-0}
}

@article{FGK98,
  author  = {Silvana Franciosi and Francesco de Giovanni and Leonid A. Kurdachenko},
  title   = {Groups with finite conjugacy classes of non-subnormal subgroups},
  journal = {Archiv der Mathematik},
  volume  = {70},
  pages   = {169--181},
  year    = {1998},
  doi     = {10.1007/s000130050181}
}

@incollection{GW15,
  author    = {Eli Glasner and Benjamin Weiss},
  title     = {Uniformly recurrent subgroups},
  booktitle = {Recent Trends in Ergodic Theory and Dynamical Systems},
  series    = {Contemporary Mathematics},
  volume    = {631},
  pages     = {63--75},
  year      = {2015},
  publisher = {American Mathematical Society},
  address   = {Providence, RI}
}

@article{Grigorchuk2011,
  author  = {Rostislav I. Grigorchuk},
  title   = {Some topics in the dynamics of group actions on rooted trees},
  journal = {Proceedings of the Steklov Institute of Mathematics},
  volume  = {273},
  pages   = {64--175},
  year    = {2011},
  doi     = {10.1134/S0081543811040067}
}

@article{GL2021,
  author  = {Maik Gr\"{o}ger and Olga Lukina},
  title   = {{Measures and stabilizers of group Cantor actions}},
  journal = {Discrete and Continuous Dynamical Systems - Series A},
  volume  = {41},
  number  = {5},
  pages   = {2001--2029},
  year    = {2021},
  doi     = {10.3934/dcds.2020350}
}

@incollection{H1985,
  author    = {Andr\'{e} Haefliger},
  title     = {Pseudogroups of local isometries},
  booktitle = {Differential Geometry (Santiago de Compostela, 1984)},
  editor    = {L. A. Cordero},
  series    = {Research Notes in Mathematics},
  volume    = {131},
  pages     = {174--197},
  publisher = {Pitman},
  address   = {Boston},
  year      = {1985}
}

@article{HL2025,
  author  = {Steven Hurder and Olga Lukina},
  title   = {Essential holonomy of Cantor actions},
  journal = {Journal of the Mathematical Society of Japan},
  volume  = {77},
  number  = {1},
  pages   = {57--74},
  year    = {2025},
  doi     = {10.2969/jmsj/90779077}
}

@article{HL2019,
  author  = {Steven Hurder and Olga Lukina},
  title   = {Wild solenoids},
  journal = {Transactions of the American Mathematical Society},
  volume  = {371},
  pages   = {4493--4533},
  year    = {2019},
  doi     = {10.1090/tran/7339}
}

@article{Joseph2024,
  author  = {Matthieu Joseph},
  title   = {Continuum of allosteric actions for non-amenable surface groups},
  journal = {Ergodic Theory and Dynamical Systems},
  volume  = {44},
  number  = {6},
  pages   = {1581--1596},
  year    = {2024},
  doi     = {10.1017/etds.2023.52}
}

@article{KKN17,
  author  = {Aditi Kar and Peter H. Kropholler and Nikolay Nikolov},
  title   = {On growth of homology torsion in amenable groups},
  journal = {Mathematical Proceedings of the Cambridge Philosophical Society},
  volume  = {162},
  number  = {2},
  pages   = {337--351},
  year    = {2017},
  doi     = {10.1017/S030500411600058X}
}

@article{LM18,
  author  = {Adrien Le Boudec and Nicolas Matte Bon},
  title   = {Subgroup dynamics and {$C^*$-simplicity} of groups of homeomorphisms},
  journal = {Annales Scientifiques de l'\'Ecole Normale Sup\'erieure},
  volume  = {51},
  number  = {3},
  pages   = {557--602},
  year    = {2018},
  doi     = {10.24033/asens.2361}
}

@article{MT20,
  author  = {Nicolas Matte Bon and Todor Tsankov},
  title   = {Realizing uniformly recurrent subgroups},
  journal = {Ergodic Theory and Dynamical Systems},
  volume  = {40},
  number  = {2},
  pages   = {478--489},
  year    = {2020},
  doi     = {10.1017/etds.2018.47}
}

@article{Neumann1951,
  author  = {B. H. Neumann},
  title   = {{Groups with Finite Classes of Conjugate Elements (in Memoriam Issai Schur)}},
  journal = {Proceedings of the London Mathematical Society},
  series  = {3},
  volume  = {1},
  number  = {1},
  pages   = {178--187},
  year    = {1951},
  doi     = {10.1112/plms/s3-1.1.178}
}

@book{Scott1964,
  author    = {W. R. Scott},
  title     = {Group Theory},
  publisher = {Prentice-Hall},
  address   = {Englewood Cliffs, NJ},
  year      = {1964}
}

@incollection{Vorobets2012,
  author    = {Yaroslav Vorobets},
  title     = {{Notes on the Schreier graphs of the Grigorchuk group}},
  booktitle = {Dynamical Systems and Group Actions},
  series    = {Contemporary Mathematics},
  volume    = {567},
  pages     = {221--248},
  publisher = {American Mathematical Society},
  address   = {Providence, RI},
  year      = {2012}
}

@article{Yoo2018,
  author  = {Jisang Yoo},
  title   = {Multiplicity structure of preimages of invariant measures under finite-to-one factor maps},
  journal = {Transactions of the American Mathematical Society},
  volume  = {370},
  pages   = {8111--8133},
  year    = {2018},
  doi     = {10.1090/tran/7234}
}
 
\end{document}